\documentclass[10pt]{article}
\providecommand{\keywords}[1]{\textbf{Keywords:} #1}
\usepackage{amsfonts}
\usepackage{amsmath}
\usepackage{amssymb}
\usepackage{hyperref}
\usepackage{bm}
\usepackage[alphabetic]{amsrefs}
\usepackage{fullpage}
\usepackage{theorem}
\usepackage{tensor}
\usepackage{cancel}
\usepackage{etex}
\usepackage[all]{xy}

\newtheorem{thm}{Theorem}[section]
\newtheorem{cor}[thm]{Corollary}
\newtheorem{lem}[thm]{Lemma}
\newtheorem{prop}[thm]{Proposition}
\newtheorem{prop-defn}[thm]{Proposition/Definition}

\newtheorem{conjec}[thm]{Conjecture}

\theorembodyfont{\rmfamily}
\newtheorem{defn}[thm]{Definition}
\newtheorem{exa}[thm]{Example}

\newtheorem{ass}[thm]{Assumptions}
\newtheorem{rem}[thm]{Remark}

\numberwithin{equation}{section}
\newenvironment{proof}{\noindent \emph{Proof.}}{\hspace{\stretch{1}}$\Box$}



%
%





\newcommand{\mcF} {\mathcal{F}}

\newcommand{\mcL} {\mathcal{L}}
\newcommand{\mcM} {\mathcal{M}}
\newcommand{\mcN} {\mathcal{N}}

\newcommand{\mcS} {\mathcal{S}}



\newcommand{\ii} {\mathrm{i}}
\newcommand{\ee} {\mathrm{e}}

\newcommand{\ind} {\indices}

\newcommand{\lp} [1] {{\left( #1 \right. }}
\newcommand{\rp} [1] {{\left. #1 \right) }}
\newcommand{\lb} [1] {{\left[ #1 \right. }}
\newcommand{\rb} [1] {{\left. #1 \right] }}

\newcommand{\Rho} {\mathrm{P}}


\newcommand{\im} {\mathop{\mathrm{im}}}

\newcommand{\gr} {\mathrm{gr}}

\newcommand{\Mat} {\mathrm{Mat}}

\newcommand{\Spin} {\mathrm{Spin}}
\newcommand{\SL} {\mathrm{SL}}
\newcommand{\GL} {\mathrm{GL}}
\newcommand{\Sp} {\mathrm{Sp}}
\newcommand{\SO} {\mathrm{SO}}

\newcommand{\Gr} {\mathrm{Gr}}

\newcommand{\G} {\mathrm{G}}
\newcommand{\Tgt} {\mathrm{T}}

\newcommand{\so} {\mathfrak{so}}

\newcommand{\slie} {\mathfrak{sl}}
\newcommand{\glie} {\mathfrak{gl}}
\newcommand{\g} {\mathfrak{g}}

\newcommand{\prb} {\mathfrak{p}}

\newcommand{\mfz} {\mathfrak{z}}

\newcommand{\mfC} {\mathfrak{C}}

\newcommand{\mfN} {\mathfrak{N}}
\newcommand{\mfV} {\mathfrak{V}}
\newcommand{\mfU} {\mathfrak{U}}
\newcommand{\mfM} {\mathfrak{M}}

\newcommand{\mfS} {\mathfrak{S}}

\newcommand{\mfA} {\mathfrak{A}}

\newcommand{\mfF} {\mathfrak{F}}

\newcommand{\mfW} {\mathfrak{W}}


\newcommand{\Pp} {\mathbb{P}}

\newcommand{\CP} {\mathbb{CP}}

\newcommand{\C} {\mathbb{C}}


\newcommand{\Cl} {\mathcal{C}\ell}

\newcounter{mnotecount}[section]
\renewcommand{\themnotecount}{\thesection.\arabic{mnotecount}}

\newcommand{\mnote}[1]
{\protect{\stepcounter{mnotecount}}$^{\mbox{\footnotesize
$
\bullet$\themnotecount}}$ \marginpar{\color{red}
\raggedright\tiny\em
$\!\!\!\!\!\!\,\bullet$\themnotecount: #1} }

\begin{document}

\title{Pure spinors, intrinsic torsion and curvature in odd dimensions}
 \author{Arman Taghavi-Chabert\\
 {\small Masaryk University, Faculty of Science, Department of Mathematics and Statistics,}\\
  {\small Kotl\'{a}\v{r}sk\'{a} 2, 611 37 Brno, Czech Republic } }
\date{}

\maketitle

\begin{abstract}
We study the geometric properties of a $(2m+1)$-dimensional complex manifold $\mathcal{M}$ admitting a holomorphic reduction of the frame bundle to the structure group $P \subset \mathrm{Spin}(2m+1,\mathbb{C})$, the stabiliser of the line spanned by a pure spinor at a point. Geometrically, $\mathcal{M}$ is endowed with a holomorphic metric $g$, a holomorphic volume form, a spin structure compatible with $g$, and a holomorphic pure spinor field $\xi$ up to scale. The defining property of $\xi$ is that it determines an almost null structure, i.e.\ an $m$-plane distribution $\mathcal{N}_\xi$ along which $g$ is totally degenerate.

We develop a spinor calculus, by means of which we encode the geometric properties of $\mathcal{N}_\xi$ and of its rank-$(m+1)$ orthogonal complement $\mathcal{N}_\xi^\perp$ corresponding to the algebraic properties of the intrinsic torsion of the $P$-structure. This is the failure of the Levi-Civita connection $\nabla$ of $g$ to be compatible with the $P$-structure. In a similar way, we examine the algebraic properties of the curvature of $\nabla$.

Applications to spinorial differential equations are given. Notably, we relate the integrability properties of $\mathcal{N}_\xi$ and $\mathcal{N}_\xi^\perp$ to the existence of solutions of odd-dimensional versions of the zero-rest-mass field equation. We give necessary and sufficient conditions for the almost null structure associated to a pure conformal Killing spinor to be integrable. Finally, we conjecture a Goldberg--Sachs-type theorem on the existence of a certain class of almost null structures when $(\mathcal{M},g)$ has prescribed curvature.

We discuss applications of this work to the study of real pseudo-Riemannian manifolds.
\end{abstract}

\keywords{complex Riemannian geometry; pure spinors; distributions; intrinsic torsion; curvature prescription; spinorial equations}

\section{Introduction and motivation}
The present article is the odd-dimensional counterpart of the author's work presented in \cite{Taghavi-Chabert2016}. Both articles work share the same motivations and goals, and the reader should refer to the latter work for further details.

Let $(\mcM, g)$ be an $n$-dimensional complex Riemannian manifold, where $n=2m+1$. We shall assume that $(\mcM,g)$ is also equipped with a global holomorphic volume form and a holomorphic spin structure so that the structure group of the holomorphic frame bundle is reduced to $G:=\Spin(n,\C)$. We work in the holomorphic category. We shall be considering a \emph{projective pure spinor field} $[\xi]$, i.e. a spinor field up to scale that annihilates a totally null $m$-plane, or \emph{$\gamma$-plane}, distribution. This will also be referred to as its associated \emph{almost null structure} $\mcN_\xi$. The structure group of the frame bundle of $(\mcM,g)$ is reduced to $P$, the stabiliser of $[\xi]$ at a point. Denote by $\g$ and $\prb$ the respective Lie algebras of $G$ and $P$, and by $\mfV$ the standard representation of $\g$. The main aim of the article is to examine the geometric properties of the $P$-structure on $(\mcM,g)$. More specifically, we will
\begin{itemize}
\item give a $P$-invariant decomposition of the space $\mfW := \mfV \otimes \left( \g/\prb \right)$ of intrinsic torsions;
\item give $P$-invariant decompositions of the spaces of curvature tensors, in particular, tracefree Ricci tensors, Cotton-York tensors and Weyl tensors;
\item apply these decompositions to the study of almost null structures and pure spinor fields on complex Riemannian manifolds.
\end{itemize}
The methodology will be a synthesis of representation theory and a spinor calculus adapted to the $P$-structure. Before we proceed, we first highlight the crucial differences between the odd- and even-dimensional cases:
\begin{itemize}
\item there is only one irreducible spinor representation of $G$ as opposed to two chiral ones -- paradoxically, this makes the spinor calculus more fiddly;
\item the stabiliser $\prb$ of $[\xi]$ induces a $|2|$-grading on $\g$, rather than a $|1|$-grading;
\item the orthogonal complement $\mcN_\xi^\perp$ of $\mcN_\xi$ is $(m+1)$-dimensional and contains $\mcN_\xi$, rather than $\mcN_\xi^\perp = \mcN_\xi$.
\end{itemize}
Consequently, one has to encode the properties of \emph{both} $\mcN_\xi$ and $\mcN_\xi^\perp$ in terms of differential conditions on $[\xi]$, although there is some degree of interdependency between $\mcN_\xi$ and $\mcN_\xi^\perp$. Making the move from even to odd dimensions is therefore not always straightforward.  A case in point is when $\mcN_\xi$ is integrable. In even dimensions, $\mcN_\xi$ would be automatically totally geodetic, but in odd dimensions, this condition is stronger. In addition, one could have the extra requirement for $\mcN_\xi^\perp$ to be also integrable, and or even totally geodetic. This is particularly relevant to generalisations of the Robinson theorem, which can be strikingly different.

The present article can, if not should, be read in conjunction with \cite{Taghavi-Chabert2016} for comparison and ease of understanding of the notions introduced in the latter. Indeed, these two papers are broadly `mirror images' of each other: the overall structure is the same in both papers as far as the numbering of the sections is concerned. For the sake of conciseness, we have not always deemed it necessary to re-establish notations and conventions.

\paragraph{Structure of the paper:} Our spinor calculus will first be developed in section \ref{sec-algebra}. New results include Propositions \ref{prop-purity_cond} and \ref{prop-pure-intersect2}, and Corollary \ref{cor-pure-intersect}, which provide simpler alternatives to some of Cartan's formulae on pure spinors. Proposition \ref{prop-intorsion-class-alg} in section \ref{sec-intrinsic-torsion} is concerned with the decomposition of the space of intrinsic torsions of a $P$-structure. In the same vein, in section \ref{sec-curvature}, Propositions \ref{prop-TFRicci-classification}, \ref{prop-Cotton-York-classification} and \ref{prop-Weyl-classification} give $P$-invariant decompositions of the spaces of tracefree Ricci tensors, Cotton-York tensors and Weyl tensors respectively.

Section \ref{sec-geometry} focuses on the geometric applications.  Proposition \ref{prop-intorsion-class}  is the geometric articulation of Proposition  \ref{prop-intorsion-class-alg}. Proposition \ref{prop-foliating-spinor}, Lemma \ref{lem-nice-impl} and Proposition \ref{prop-conformal} are concerned with geometric interpretations of $\mcN_\xi$ in terms of $\nabla [\xi]$. Three distinct generalisations of the Robinson theorems for three distinct types of zero-rest-mass fields are given in Theorems \ref{thm-Robinson-zrm}, \ref{thm-Robinson-co-zrm} and \ref{thm-Robinson-nc-zrm}. Applications to conformal Killing spinors are given in Propositions \ref{prop-conformal-Killing-foliating}, \ref{prop-Killing-foliating} and \ref{prop-cKsp2par}. Conjecture \ref{conjec-GS} postulates a generalisation of the Goldberg--Sachs theorem given in \cite{Taghavi-Chabert2012}. Integrability conditions for solutions of the field equations involved are also given in Propositions \ref{prop-foliation-int-cond}, \ref{prop-recurrent-spinor}, \ref{prop-int-cond-par}, \ref{prop-int-zrm}, \ref{prop-int-cond-twistor-spinor} and \ref{prop-int-cond-Killing-spinor} among others.

Appendix \ref{sec-spinor-descript} contains useful formulae to characterise tracefree Ricci, Cotton--York  and Weyl tensors in the light of the decompositions given in section \ref{sec-curvature}. A brief discussion of spinor calculus in dimensions three and five can be found in appendix \ref{sec-3-5dim}. In appendix \ref{sec-conformal}, we describe conformal transformations of spinor fields.

\section{Spinor calculus}\label{sec-algebra}
Conventions follow those of \cite{Taghavi-Chabert2016}, based on \cites{Penrose1984,Penrose1986}. Further background on spinors can be found in \cites{Cartan1981,Penrose1986,Budinich1989,Harnad1995,Kopczy'nski1997} and on representation theory in \cites{Baston1989,vCap2009}. 

\subsection{Clifford algebras and spinor representations}
Let $\mfV$ be an $n$-dimensional complex vector space equipped with a non-degenerate symmetric bilinear form $g_{ab} = g_{(ab)} \in \odot^2 \mfV^*$, by means of which we shall identify $\mfV$ with its dual $\mfV^*$. We choose an orientation, and denote the Hodge star operator by $\star$. Denote the Clifford algebra of $(\mfV , g )$ by $\Cl ( \mfV , g )$ and the Clifford multiplication by a dot $\cdot$. We recall that $\Cl ( \mfV , g) \cong \wedge^\bullet \mfV$ as vector spaces. Henceforth, we assume $n=2m+1$. The spin group $G:= \Spin(2m+1,\C)$ has a single $2^m$-dimensional irreducible representation, the \emph{spinor space} $\mfS$ of $(\mfV,g )$. We can realise $\mfS$ as follows. We split $\mfV$ as $\mfV \cong \mfN \oplus \mfN^* \oplus \mfU$ where $\mfN$ and $\mfN^*$ are two totally null $m$-dimensional subspaces of $\mfV$, dual to each other $\mfN^*$, and the one-dimensional complement $\mfU$ is non-null. Then $\mfS$ can be identified with $\wedge^\bullet \mfN$ as an $\Cl(\mfV,g)$-module: for any $(v,w,u) \in \mfV$, the action of the Clifford algebra on $\mfS$ is given by $(v,w,u) \cdot \xi = v \wedge \xi - w \lrcorner \xi + \ii \epsilon u \, \xi$
where $\ii^2=-1$ and $\epsilon = 1$ if $\xi \in \wedge^m \mfN \oplus \wedge^{m-2} \mfN \oplus \wedge^{m-4} \mfN \oplus \ldots$, and $\epsilon = -1$ if $\xi \in \wedge^{m-1} \mfN \oplus \wedge^{m-3} \mfN \oplus \wedge^{m-5} \mfN \oplus \ldots$.


The Clifford algebra can be shown to be isomorphic to a direct sum of two inequivalent copies of the algebra $\Mat(2^m,\C)$ of $2^m \times 2^m$-matrices over $\C$ acting on $\mfS$. Elements of $\mfS$ will carry upstairs upper-case Roman indices, e.g. $\xi^A$, and similarly for elements of its dual $\mfS^*$, with downstairs indices, e.g. $\eta_A$. The Clifford algebra $\Cl(\mfV,g )$ is generated by the \emph{$\gamma$-matrices $\gamma \ind{_a _A ^B}$} which satisfy
\begin{align}\label{eq-red_Clifford_property}
 \gamma \ind{_{\lp{a}} _A ^C} \gamma \ind{_{\rp{b}} _C ^B} & = - g_{ab} \delta \ind*{_A^B} \, .
\end{align}
Thus, only skew-symmetrised products of $\gamma$-matrices count, and we shall make use of the notational short hand
$\gamma \ind{_{a_1 a_2 \ldots a_p} _A ^B} := \gamma \ind{_{\lb{a_1}} _A ^{C_1}} \gamma \ind{_{a_2} _{C_1} ^{C_2}} \ldots \gamma \ind{_{\rb{a_p}} _{C_p} ^{B}}$ for any $p$. These realise the linear isomorphism $\wedge^\bullet \mfV \cong \Cl(\mfV,g) \cong \Mat(2^m,\C) \oplus \Mat(2^m,\C)$, and the two copies of $\Mat(2^m,\C)$ will be identified by Hodge duality.

The spinor space $\mfS$ and its dual $\mfS^*$ are equipped with non-degenerate bilinear forms, denoted $\gamma_{AB}$, with which one can in effect raise or lower spinor indices. In particular, we have bilinear maps $\gamma_{a_1 \ldots a_p AB}$ from $\mfS \times \mfS$ to $\wedge^p \mfV$ for any $p$. Depending on the values of $m$ and $p$, these can be either symmetric or skewsymmetric.
Our treatment will be largely dimension independent, and we will in general dispense of their use. Nonetheless, we shall make use of the following result:
\begin{lem}\label{lem-technical}
 We have
\begin{multline*}
 \gamma \ind{_a_A^B} \gamma \ind{_{b_1 \ldots b_p}_{BD}} \gamma \ind{_c_C^D} 
= (-1)^m \left( \gamma \ind{_{a b_1 \ldots b_p c}_{AC}} - (p + 1 )\gamma \ind{_{\lb{a} b_1 \ldots b_{p-1}}_{AC}} g \ind{_{\rb{b_p} c}} \right. \\
\left. - p \, g \ind{_{a \lb{b_1}}} \gamma \ind{_{b_2 \ldots \rb{b_p}c}_{AC}} + p (p+1) g \ind{_{a \lb{b_1}}} \gamma \ind{_{b_2 \ldots b_{p-1}}_{AC}} g \ind{_{\rb{b_p}c}} \right) \, .
\end{multline*}
In particular, $\gamma \ind{^a_A^B} \gamma \ind{_{b_1 \ldots b_p}_{BD}} \gamma \ind{_a_C^D} = 
 (-1)^{m + p} ( 2p - 2m - 1 )\gamma \ind{_{b_1 \ldots b_p}_{AC}}$.
\end{lem}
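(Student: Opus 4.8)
The statement is, once the spinor-index conventions are unwound, an identity in the Clifford algebra $\Cl(\mfV,g)$. Regard the skew product $\gamma_{b_1\ldots b_q}$ of \eqref{eq-skew-gamma} as an endomorphism of $\mfS$ (one spinor index raised, one lowered). Then the left-hand side is, viewed as a bilinear form on $\mfS$, the triple Clifford product $\gamma_a\,\gamma_{b_1\ldots b_p}\,\gamma_c$; the overall factor $(-1)^m$ is produced by the passage from the endomorphism normalisation to the bilinear-form normalisation of the statement — lowering the remaining spinor index with the form on $\mfS$ and invoking the symmetry type of the maps $\gamma_{a_1\ldots a_q}\colon\mfS\times\mfS\to\wedge^q\mfV$ recalled above. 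So the plan is to expand $\gamma_a\gamma_{b_1\ldots b_p}\gamma_c$ into skew components. The only tool required is the elementary contraction identity
\[
 \gamma_a\,\gamma_{c_1\ldots c_q}\ =\ \gamma_{ac_1\ldots c_q}\ -\ q\,g_{a[c_1}\gamma_{c_2\ldots c_q]}\,,
\]
together with its mirror $\gamma_{c_1\ldots c_q}\,\gamma_a=\gamma_{c_1\ldots c_q a}-q\,\gamma_{[c_1\ldots c_{q-1}}g_{c_q]a}$; both follow at once from the Clifford relation \eqref{eq-red_Clifford_property} by induction on $q$.

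For the first identity I would apply the left formula to $\gamma_a\gamma_{b_1\ldots b_p}$, writing it as the $(p+1)$-form $\gamma_{ab_1\ldots b_p}$ plus the $(p-1)$-form $-p\,g_{a[b_1}\gamma_{b_2\ldots b_p]}$, and then right-multiply each summand by $\gamma_c$ using the mirror formula. The $(p+1)$-form yields $\gamma_{ab_1\ldots b_p c}$ together with a $p$-form in which $c$ is contracted against one of $\{a,b_1,\dots,b_p\}$, while the $(p-1)$-form yields a $p$-form (with $c$ adjoined to $\gamma_{b_2\ldots b_p}$) and a $(p-2)$-form (with $c$ contracted against one of $b_1,\dots,b_p$). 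Collecting these four tensor types, and keeping careful account of the nested skew-symmetrisations and their multiplicities, reproduces the four terms on the right-hand side; restoring the overall $(-1)^m$ from the convention conversion then completes the proof. I would sanity-check the coefficients on $p=1$ and $p=2$, since reconciling the single global sign $(-1)^m$ with the internal numerical factors is precisely where a sign tends to slip.

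For the contracted identity it is cleaner to compute $\gamma^a\gamma_{b_1\ldots b_p}\gamma_a$ directly than to trace the general formula (whose top term $\gamma_{ab_1\ldots b_p c}$ vanishes under the trace by antisymmetry). Rearranging the two elementary identities gives $\gamma_a\gamma_{b_1\ldots b_p}=(-1)^p\,\gamma_{b_1\ldots b_p}\,\gamma_a-2p\,g_{a[b_1}\gamma_{b_2\ldots b_p]}$, whence
\[
 \gamma^a\,\gamma_{b_1\ldots b_p}\,\gamma_a\ =\ (-1)^p\,\gamma_{b_1\ldots b_p}\,\bigl(\gamma^a\gamma_a\bigr)\ -\ 2p\,g^a{}_{[b_1}\gamma_{b_2\ldots b_p]}\,\gamma_a\,.
\]
Now $\gamma^a\gamma_a=-(2m+1)$ by contracting \eqref{eq-red_Clifford_property}, and in the last term the $g$-contractions among the $b$'s drop out (a symmetric $g$ against a skew-symmetrisation), so $g^a{}_{[b_1}\gamma_{b_2\ldots b_p]}\gamma_a$ collapses to $(-1)^{p-1}\gamma_{b_1\ldots b_p}$; assembling the two contributions gives $\gamma^a\gamma_{b_1\ldots b_p}\gamma_a=(-1)^p(2p-2m-1)\,\gamma_{b_1\ldots b_p}$, and the same $(-1)^m$ conversion yields the stated $(-1)^{m+p}(2p-2m-1)\,\gamma_{b_1\ldots b_p}$. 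The real obstacle throughout is not conceptual but bureaucratic: keeping every sign and every skew-symmetrisation consistent, above all matching the overall $(-1)^m$ against the four internal coefficients.
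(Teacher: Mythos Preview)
The paper does not actually prove this lemma: it is introduced explicitly with ``we state without proof the following technical lemma that will be used later.'' There is therefore no argument to compare against; I can only assess your proposal on its own merits.

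Your approach is the standard and correct one. Recasting the identity as a statement about endomorphisms of $\mfS$ and reducing it to repeated use of the contraction rule $\gamma_a\gamma_{c_1\ldots c_q}=\gamma_{ac_1\ldots c_q}-q\,g_{a[c_1}\gamma_{c_2\ldots c_q]}$ (and its right-multiplication mirror) is exactly how such identities are established. The four-term expansion you outline is right, and your direct computation of the traced identity is clean: the key steps $\gamma^a\gamma_a=-(2m+1)$ and $g^a{}_{[b_1}\gamma_{b_2\ldots b_p]}\gamma_a=(-1)^{p-1}\gamma_{b_1\ldots b_p}$ are both correct, the latter because any $g_{b_ib_j}$ contributions must cancel by the manifest antisymmetry of the left-hand side in $b_1,\ldots,b_p$. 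Your own caveat is also the right one: the only fragile point is the bookkeeping of the overall $(-1)^m$ coming from the symmetry type of the spin-invariant bilinear form, which depends on $m$ and on which convention for index-lowering one fixes. A sanity check at $p=0$ (where the traced identity reads $\gamma^a\gamma_a=-(2m+1)$ and the bilinear-form version must reproduce the sign of $\gamma_{AB}$) and at $p=m$ (where $\gamma_{a_1\ldots a_m AB}$ is symmetric) pins down that sign unambiguously.
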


\subsection{Null structures and pure spinors}
\begin{defn}
A \emph{null structure} or \emph{$\gamma$-plane} on $\mfV$ is an $m$-dimensional vector subspace $\mfN \subset \mfV$ that is totally null, ie $g_{ab} X^a Y^b =0$ for all $X^a, Y^a \in \mfN$.
\end{defn}

Let $\xi \ind*{^A}$ be a non-zero spinor in $\mfS$, and consider the map $\xi \ind*{_a^A} := \xi \ind*{^B} \gamma  \ind{_a_B^A} : \mfV \rightarrow \mfS$. By \eqref{eq-red_Clifford_property}, the kernel of $\xi \ind*{_a^A} : \mfV \rightarrow \mfS$ is \emph{totally null}.
\begin{defn}
 A non-zero spinor $\xi \ind*{^A}$ is said to be \emph{pure} if the kernel of $\xi \ind*{_a^A}: \mfV \rightarrow \mfS$ is $m$-dimensional, and thus defines a null structure.
 
 The projectivisation of the line $\langle \xi^A \rangle$ spanned by a pure spinor $\xi^A$ will be referred to as a \emph{projective pure spinor} $[\xi^A] \in \Pp \mfS$.
\end{defn}

\begin{prop}[\cite{Cartan1981}] \label{prop-fundamental}
There is a one-to-one correspondence between projective pure spinors and $\gamma$-planes on $(\mfV, g)$.
\end{prop}


Henceforth, $\xi^A$ will denote a fixed pure spinor. The crucial departure from the even-dimensional case is that a null structure is contained in its orthogonal complement, that is, $\xi^A$ induces a filtration
\begin{align}\label{eq-filtration}
\{ 0 \} =: \mfV^2 \subset \mfV^1 \subset \mfV^0 \subset \mfV^{-1} \, ,
\end{align}
where $\mfV^{-1} := \mfV$, $\mfV^1 := \ker \xi \ind*{_a^A} : \mfV \rightarrow \mfS$ and the orthogonal complement $\mfV^0$ of $\mfV^1$ with respect to $g_{ab}$ is $(m+1)$-dimensional.
%
The map $\xi_a^A$ allows us to identify elements of $\mfV$ with elements of $\mfS$, notably
\begin{align*}
 \left( \mfV^{-1}/ \mfV^1 \right) \otimes \mfS^{\frac{m}{2}} & \cong \mfS^{\frac{m-2}{2}} \, , &
   ( \mfV^0/ \mfV^1 ) \otimes \mfS^{\frac{m}{2}} & \cong \mfS^{\frac{m}{2}} \, , &
 ( \mfV^{-1} / \mfV^0 ) \otimes \mfS^{\frac{m}{2}} & \cong \mfS^{\frac{m-2}{2}}/\mfS^{\frac{m}{2}} \, ,
 \end{align*}
 where  $\langle \xi^A \rangle =: \mfS^{\frac{m}{2}} \subset \mfS^{\frac{m-2}{2}} := \im \xi \ind*{_a^A} : \mfV \rightarrow \mfS$.
Dually, we also have
\begin{align*}
\mfV^0 & \cong \mfS^{\frac{m}{2}} \otimes \left( \mfS^{-\frac{m}{2}} / \mfS^{-\frac{m-4}{2}} \right) \, , &
\mfV^0 / \mfV^1 & \cong \mfS^{\frac{m}{2}} \otimes \left( \mfS^{-\frac{m}{2}} / \mfS^{-\frac{m-2}{2}} \right) \, , &
\mfV^1 & \cong \mfS^{\frac{m}{2}} \otimes \left( \mfS^{-\frac{m-2}{2}} / \mfS^{-\frac{m-4}{2}} \right) \, ,
\end{align*}
where $\mfS^{-\frac{m}{2}} := \mfS^*$, $\mfS^{-\frac{m-2}{2}} := \ker \xi^A : \C \leftarrow \mfS^*$ and $\mfS^{-\frac{m-4}{2}} := \ker \xi \ind*{_a ^A} : \mfV^* \leftarrow \mfS^*$. Using  \eqref{eq-red_Clifford_property}, we can check that $\mfS^{-\frac{m-4}{2}}  \subset \mfS^{-\frac{m-2}{2}}  \subset \mfS^{-\frac{m}{2}}$.
More concretely, we have
\begin{lem}\label{lem-vector_decomposition}
 Let $V^a$ be a non-zero vector in $\mfV$. Then
 \begin{itemize}
 \item $V^a$ is an element of $\mfV^0$ if and only if $V^a = \xi \ind*{^a^A} v \ind{_A}$ for some non-zero $v_A \in \mfS^{-\frac{m}{2}} / \mfS^{-\frac{m-4}{2}}$;
 \item $V^a$ is an element of $\mfV^1$ if and only if $V^a = \xi \ind*{^a^A} v \ind{_A}$ for some non-zero $v_A \in \mfS^{-\frac{m-2}{2}} / \mfS^{-\frac{m-4}{2}}$.
 \end{itemize}
\end{lem}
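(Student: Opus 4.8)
The plan is to reduce the statement to linear algebra for the single map $T := \xi_a{}^A \colon \mfV \to \mfS$, $X^a \mapsto X^a\xi_a{}^A$, and its transpose $T^* \colon \mfS^* \to \mfV^*$, $\eta_A \mapsto \xi_a{}^A\eta_A$, observing that the vector $V^a = \xi^{aA}\eta_A$ in the statement is just $g^{ab}(T^*\eta)_b$, the image of $T^*\eta$ under the $g$-isomorphism $\mfV^* \cong \mfV$. First I would record the relevant data: $\ker T = \mfV^1$, of dimension $m$ since $\xi^A$ is pure, so $\im T = \mfS^{\frac{m-2}{2}}$ has dimension $m+1$; $\ker T^* = (\im T)^0 = \mfS^{-\frac{m-4}{2}}$, of dimension $2^m - (m+1)$; and, from the inclusion $\mfS^{-\frac{m-4}{2}} \subset \mfS^{-\frac{m-2}{2}} \subset \mfS^{-\frac{m}{2}}$ already noted in the text, $\dim\mfS^{-\frac{m-2}{2}} = 2^m - 1$. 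I would also use throughout that the $g$-isomorphism $\mfV^* \cong \mfV$ carries the annihilator $W^0$ of a subspace $W \subseteq \mfV$ to its orthogonal complement $W^\perp$.

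For the first bullet: $\im T^* = (\ker T)^0 = (\mfV^1)^0$, so under $g$ the set $\{\xi^{aA}\eta_A : \eta_A \in \mfS^*\}$ is $(\mfV^1)^\perp = \mfV^0$ — the inclusion $\subseteq$ is also immediate, since $g_{ab}X^a\xi^{bA}\eta_A = X^a\xi_a{}^A\eta_A = 0$ for $X^a \in \mfV^1$, and then a dimension count ($m+1$ on each side) forces equality. Since $T^*$ has kernel $\mfS^{-\frac{m-4}{2}}$, it descends to an isomorphism $\mfS^{-\frac{m}{2}}/\mfS^{-\frac{m-4}{2}} \cong \mfV^0$; hence $V^a = \xi^{aA}\eta_A$ is non-zero exactly when the class $v_A := \eta_A + \mfS^{-\frac{m-4}{2}}$ is non-zero, which is the claim.

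For the second bullet: restrict $T^*$ to $\mfS^{-\frac{m-2}{2}} = \{\eta_A : \xi^A\eta_A = 0\}$. As $\ker T^* = \mfS^{-\frac{m-4}{2}} \subset \mfS^{-\frac{m-2}{2}}$, the image has dimension $(2^m-1)-(2^m-m-1) = m = \dim\mfV^1$, so it is enough to show $V^a = \xi^{aA}\eta_A \in \mfV^1 = (\mfV^0)^\perp$ whenever $\xi^A\eta_A = 0$. For $Z^a \in \mfV^0$ one has $g_{ab}Z^a\xi^{bA}\eta_A = Z^a\xi_a{}^A\eta_A$, and by \eqref{eq-iso2} the map $Z^a \mapsto Z^a\xi_a{}^A$ sends $\mfV^0$ onto the line $\langle\xi^A\rangle$, so this pairing vanishes for every $Z^a \in \mfV^0$ precisely when $\xi^A\eta_A = 0$. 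Thus the $g$-transported image of $T^*|_{\mfS^{-\frac{m-2}{2}}}$ is an $m$-dimensional subspace of $\mfV^1$, hence equals $\mfV^1$, and the non-zero-class statement follows as before.

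I do not expect a real obstacle: this is routine linear algebra. The one thing needing care is the bookkeeping — keeping $\im$ and $\ker$ of $T$ apart from those of $T^*$, using that the $g$-identification turns annihilators into $\perp$'s (with $(\mfV^1)^{\perp\perp} = \mfV^1$ since $g$ is non-degenerate), and checking that $\eta_A \mapsto \xi^{aA}\eta_A$ genuinely factors through the stated quotients so that ``non-zero $v_A$'' is meaningful. No $\gamma$-matrix identities are required, not even Lemma \ref{lem-technical}; beyond the definitions in the filtration \eqref{eq-filtration}, the only extra input is the fact recorded at \eqref{eq-iso2} that $\mfV^0$ maps onto $\langle\xi^A\rangle$ under $\xi_a{}^A$.
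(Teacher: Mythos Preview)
Your argument is correct and is essentially the explicit verification of what the paper merely asserts: the paper presents the lemma as the ``concrete expression'' of the dual isomorphisms \eqref{eq-dual_iso2-0}--\eqref{eq-dual_iso2-1}, obtained by dualising \eqref{eq-iso}--\eqref{eq-iso3}, without further proof. Your linear-algebra unpacking via $\ker/\im$ of $T$ and $T^*$, annihilators, and dimension counts is exactly how one justifies those dualities, so the approaches coincide; yours is simply more detailed.
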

As a direct consequence, a pure spinor $\xi^A$ must satisfy $\xi \ind*{^a^A} \xi \ind*{_a^B} = \lambda \, \xi^A \xi^B$ for some $\lambda$. Contracting each side by $\xi \ind*{^c^C} \gamma \ind{_c_A^D}$ and a little algebra then leads to $\lambda=-1$.
\begin{prop}\label{prop-purity_cond}
A non-zero spinor $\xi^A$ is pure if and only if it satisfies
\begin{align}\label{eq-purity_cond}
 \xi \ind*{^a ^C} \xi \ind*{_a ^D} & = - \xi^C \xi^D \, .
\end{align}
\end{prop}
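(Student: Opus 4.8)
Set $\Phi := \xi \ind{_a^A} \colon \mfV \to \mfS$, $V^a \mapsto \xi \ind{_a^A} V^a$, and $\Psi := \xi \ind{^a^A} \colon \mfS^* \to \mfV$, $w_A \mapsto \xi \ind{^a^A} w_A$; thus $\Psi$ is the transpose of $\Phi$ composed with the isomorphism $g^{ab} \colon \mfV^* \to \mfV$, so $\mathrm{rank}\,\Psi = \mathrm{rank}\,\Phi$, and the composite $\Phi \circ \Psi \colon \mfS^* \to \mfS$ is $w_A \mapsto \xi \ind{^a^A} \xi \ind{_a^B} w_A$. Recall from the Clifford property \eqref{eq-red_Clifford_property} that $\ker \Phi$ is totally null, hence $\dim \ker \Phi \le m$. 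For the ``only if'' direction there is little to do beyond the discussion preceding the statement: if $\xi^A$ is pure then, by Lemma~\ref{lem-vector_decomposition}, $\im \Psi = \mfV^0$ and $\ker \Phi = \mfV^1 \subset \mfV^0$, so $\Phi \circ \Psi$ has one-dimensional image $\Phi(\mfV^0) = \langle \xi^B \rangle$; the left-hand side of \eqref{eq-purity_cond} being symmetric in $A,B$, this forces $\xi \ind{^a^A} \xi \ind{_a^B} = \lambda \, \xi^A \xi^B$ for a scalar $\lambda$, and $\lambda = -1$ follows either from a short Clifford-algebra manipulation using \eqref{eq-red_Clifford_property} and Lemma~\ref{lem-technical}, or transparently from the model of Remark~\ref{rem-spinor-form1}: there $\xi$ generates $\wedge^m \mfN$ and $\xi \ind{_a^A}$ annihilates $\mfN$, so the only contribution to $g^{ab} \xi \ind{_a^A} \xi \ind{_b^B}$ comes from $\mfU$, where $\xi \ind{_a^A}$ acts on $\xi$ by a scalar $c$ with $c^2 = -g(u,u)$ by the Clifford relation, giving $g(u,u)^{-1} c^2 \, \xi^A \xi^B = -\xi^A \xi^B$.

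The real content is the converse, and the plan is a dimension count. Assume \eqref{eq-purity_cond}; then $\Phi \circ \Psi$ is the \emph{rank-one} map $w_A \mapsto -(\xi^A w_A)\,\xi^B$ (rank one because $\xi^A \neq 0$), so $\dim \ker(\Phi \circ \Psi) = \dim \mfS - 1$. Since $\ker \Psi \subseteq \ker(\Phi \circ \Psi)$, the map $\Psi$ carries $\ker(\Phi \circ \Psi)$ onto a subspace of $\ker \Phi$ of dimension $\dim \ker(\Phi \circ \Psi) - \dim \ker \Psi = (\dim \mfS - 1) - (\dim \mfS - \mathrm{rank}\,\Psi) = \mathrm{rank}\,\Psi - 1$. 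Therefore
\[
\dim \ker \Phi \ \ge\ \mathrm{rank}\,\Psi - 1 \ =\ \mathrm{rank}\,\Phi - 1 \ =\ (2m+1 - \dim \ker \Phi) - 1 \, ,
\]
so $\dim \ker \Phi \ge m$; combined with $\dim \ker \Phi \le m$ this gives $\dim \ker \Phi = m$, i.e.\ $\xi^A$ is pure.

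Everything here is elementary linear algebra once $\Phi$ and $\Psi$ are in place. The points that need care are the identification $\mathrm{rank}\,\Psi = \mathrm{rank}\,\Phi$ (so that the bound on $\dim \ker \Phi$ feeds back on itself and becomes self-improving) and the fact that $\Phi \circ \Psi$ has rank \emph{exactly} one under \eqref{eq-purity_cond}; on the ``only if'' side, the only genuinely computational step is the value of $\lambda$, which is already disposed of in the text preceding the statement, or immediately by the explicit spinor model. I expect no serious obstacle.
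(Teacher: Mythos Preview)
Your argument is correct. The ``only if'' direction follows the paper exactly: once Lemma~\ref{lem-vector_decomposition} puts $\im\Psi$ inside $\mfV^0$ and $\ker\Phi = \mfV^1$, the composite $\Phi\circ\Psi$ visibly has one-dimensional image $\langle\xi\rangle$, symmetry forces the tensor form, and the constant is pinned down by the short Clifford computation already in the text.

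The ``if'' direction, however, is where you diverge from the paper, and your route is cleaner. The paper does not prove this direction directly: it instead uses Lemma~\ref{lem-technical} to rewrite \eqref{eq-purity_cond} as Cartan's classical purity conditions \eqref{eq-purity_cond_Cartan} and then cites Cartan for the equivalence of those with purity. Your rank argument is entirely self-contained: from $\mathrm{rank}(\Phi\circ\Psi)=1$ and $\mathrm{rank}\,\Psi=\mathrm{rank}\,\Phi$ you extract the inequality $\dim\ker\Phi \ge 2m - \dim\ker\Phi$, which together with total nullity forces $\dim\ker\Phi = m$. This avoids both Lemma~\ref{lem-technical} and any appeal to Cartan, and it makes transparent why the single quadratic relation \eqref{eq-purity_cond} suffices in odd dimensions. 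The paper's route, by contrast, buys the connection to the familiar form \eqref{eq-purity_cond_Cartan}, which is useful later (e.g.\ for Proposition~\ref{prop-pure-intersect}); yours buys a proof that stands on its own.
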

By Lemma \ref{lem-technical}, we can express \eqref{eq-purity_cond} equivalently as the following more familiar algebraic characterisation.
\begin{prop}[\cite{Cartan1981}]\label{prop-Cartan_char}
A non-zero spinor $\xi^A$ is pure if and only if it satisfies
\begin{align}\label{eq-purity_cond_Cartan}
\begin{aligned}
 \gamma \ind{_{a_1 \ldots a_p} _{A B}} \xi^A \xi^B & = 0 \, , & \mbox{for all $p<m$, $p \equiv m,m+1 \pmod 4$,} \\
 \gamma \ind{_{A B}} \xi^A \xi^B & = 0 \, , & \mbox{when $m \equiv 0 , 3 \pmod 4$,} \\
 \gamma \ind{_{a_1 \ldots a_m} _{A B}} \xi^A \xi^B & \neq 0 \, .
 \end{aligned}
\end{align}
\end{prop}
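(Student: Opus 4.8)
The plan is to derive the Cartan characterisation \eqref{eq-purity_cond_Cartan} from the contracted form \eqref{eq-purity_cond} established in Proposition \ref{prop-purity_cond}, using the Fierz-type expansion provided by Lemma \ref{lem-technical}. First I would observe that the key object to analyse is the bispinor $\xi^A \xi^B$. Since the symmetric part $\odot^2 \mfS$ of the spinor square decomposes under $\Spin(2m+1,\C)$ as a direct sum of the spaces $\wedge^p \mfV$ for those $p$ with $\gamma\ind{_{a_1\ldots a_p}^{AB}}$ symmetric — concretely $p \equiv m, m-3, m-4, m-7, \ldots$, i.e. $m - p \equiv 0, 3 \pmod 4$ — the spinor $\xi^A$ is pure precisely when all these projections vanish except the top one $p = m$, which must be nonzero. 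So the statement to prove is really the equivalence between "\eqref{eq-purity_cond} holds" and "all lower symmetric $\gamma$-contractions of $\xi^A\xi^B$ vanish while the top one does not".

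The main computation: contract \eqref{eq-purity_cond}, namely $\xi\ind{^{aC}}\xi\ind{_a^D} = -\xi^C\xi^D$, with $\gamma\ind{_{b_1\ldots b_p}_{CD}}$. On the right-hand side this gives $-\gamma\ind{_{b_1\ldots b_p}_{CD}}\xi^C\xi^D$, the quantity we want to control. On the left-hand side we get $\gamma\ind{_a_E^C}\xi^E\,\gamma\ind{_{b_1\ldots b_p}_{CD}}\,\gamma\ind{^a_F^D}\xi^F$, which is exactly the triple-$\gamma$ string $\gamma\ind{^a_E^C}\gamma\ind{_{b_1\ldots b_p}_{CD}}\gamma\ind{_a_F^D}$ contracted into $\xi^E\xi^F$. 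By the "in particular" part of Lemma \ref{lem-technical}, this string collapses to $(-1)^{m+p}(2p - 2m - 1)\,\gamma\ind{_{b_1\ldots b_p}_{EF}}$. Hence \eqref{eq-purity_cond} is equivalent to
\begin{align*}
\left( (-1)^{m+p}(2p - 2m - 1) + 1 \right) \gamma\ind{_{b_1\ldots b_p}_{AB}}\xi^A\xi^B & = 0 \, ,
\end{align*}
for every $p$ with $\gamma\ind{_{b_1\ldots b_p}}$ symmetric. Then it is a matter of checking the scalar coefficient: for $p = m$ it is $(-1)^{2m}(-1) + 1 = 0$, so the top term is unconstrained (consistent with it being allowed to be nonzero); for $p < m$ with $m - p \equiv 0 \pmod 4$ one has $(-1)^{m+p} = (-1)^{2p} \cdot (-1)^{m-p}$ wait — more cleanly, $(-1)^{m+p}$ with $m-p$ even gives $(-1)^{2p}=+1$, so the coefficient is $(2p - 2m - 1) + 1 = 2(p-m) \neq 0$; for $m - p \equiv 3 \pmod 4$, $m-p$ is odd so $(-1)^{m+p} = -1$, giving $-(2p-2m-1) + 1 = 2(m-p) + 2 \neq 0$. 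In every relevant case $p < m$ the coefficient is nonzero, forcing $\gamma\ind{_{b_1\ldots b_p}_{AB}}\xi^A\xi^B = 0$. Conversely, if all these vanish and the top one is nonzero, the displayed identity holds for all symmetric $p$, which reassembles into \eqref{eq-purity_cond}; and the nonvanishing of the $p=m$ term is needed to ensure $\xi^A \neq 0$ is genuinely pure rather than, say, a spinor whose full square vanishes. One should also remark that the vanishing of the symmetric $\gamma$-contractions of $\xi^A\xi^B$ for $p < m$ automatically kills the lower-degree pieces, so only those two clauses in \eqref{eq-purity_cond_Cartan} are needed.

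The step I expect to be the main obstacle is the bookkeeping of which $\gamma\ind{_{a_1\ldots a_p}^{AB}}$ are symmetric (as opposed to skew) in their spinor indices — this depends on $m \bmod 4$ and on $p$, and getting the reflexivity/symmetry pattern right is what pins down the list $m - p \equiv 0, 3 \pmod 4$ appearing in \eqref{eq-purity_cond_Cartan}. The excerpt only records that $p = m$ and $p = m+1$ give symmetric maps, so I would need to invoke (or briefly re-derive from the transpose properties of the $\gamma$-matrices) the full periodicity-4 rule. Once that is in hand, the argument above is essentially a one-line consequence of Lemma \ref{lem-technical} together with a sign check, so the conceptual content is light; the care required is entirely in the combinatorics of signs and symmetry types. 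A secondary subtlety is making sure the "only if" direction is complete: that \eqref{eq-purity_cond} genuinely forces \emph{all} the lower obstructions to vanish and not merely some combination of them — but since the derivation treats each degree $p$ separately and the coefficient is nonzero for each, this is automatic.
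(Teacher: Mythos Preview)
Your approach is correct and is precisely what the paper does: it states only that ``by Lemma \ref{lem-technical}, we can express \eqref{eq-purity_cond} equivalently as the more familiar algebraic characterisation of pure spinors due to Cartan'', leaving the reader to supply exactly the contraction-and-coefficient argument you have written out. Your identification of the residual bookkeeping (the $\bmod\ 4$ symmetry pattern of $\gamma\ind{_{a_1\ldots a_p}^{AB}}$) as the only nontrivial step is accurate, and your coefficient check for each congruence class is sound.
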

We shall refer to both equations \eqref{eq-purity_cond} and \eqref{eq-purity_cond_Cartan} as the \emph{purity conditions} of a spinor $\xi^A$. These are vacuous when $m\leq2$, i.e. all spinors are pure when $m \leq 2$.

The only non-vanishing irreducible component of the tensor product $\xi^A \xi^B$ is thus the $m$-form
$\phi \ind{_{a_1 \ldots a_m}} := \gamma \ind{_{a_1 \ldots a_m}_{AB}} \xi \ind*{^A} \xi \ind*{^B}$, which can be seen to annihilate $\mfV^0$. It is \emph{null} (or \emph{simple} or \emph{decomposable}) in the sense that
$\phi \ind{_{a_1 \ldots a_m}} = \xi \ind*{_{a_1}^{A_1}} \ldots \xi \ind*{_{a_m}^{A_m}} \varepsilon \ind{_{A_1 \ldots A_m}} \in \wedge^m \mfV^1$ for some $\varepsilon \ind{_{A_1 \ldots A_m}} \in \wedge^m \left(\mfS^{-\frac{m-2}{2}} / \mfS^{-\frac{m-4}{2}} \right)$. Similarly, its Hodge dual  $(* \phi ) \ind{_{a_1 \ldots a_{m+1}}} \in \wedge^{m+1} \mfV^0$ annihilates $\mfV^1$ and is represented by some $\varepsilon \ind{_{A_1 \ldots A_{m+1}}} \in \wedge^{m+1} \left(\mfS^{-\frac{m}{2}} / \mfS^{-\frac{m-4}{2}} \right)$.

\begin{prop}[\cite{Cartan1981}]\label{prop-pure-intersect}
Let $\alpha^A$ and $\beta^A$ be two spinors not proportional to each other. Then the $\gamma$-planes associated to $\alpha^A$ and $\beta^A$ intersect in a totally null $(m-k)$-plane if and only if
 \begin{align*}
  \gamma \ind{_{a_1 \ldots a_p}_{AB}} \alpha^A \beta^B & = 0 \, , & \mbox{for all $p < m-k$,} \\
   \gamma \ind{_{AB}} \alpha^A \beta^B & \neq 0 \, , \\
  \gamma \ind{_{a_1 \ldots a_{m-k}}_{AB}} \alpha^A \beta^B & \neq 0 \, .
 \end{align*}
for $k=1, \ldots m$.
\end{prop}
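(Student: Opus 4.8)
The plan is to reduce the pair $(\alpha^A,\beta^A)$ to a standard position using the transitivity of the orthogonal group on configurations of totally null subspaces, and then to read off the $\gamma$-matrix bilinears directly in the exterior-algebra model of Remark \ref{rem-spinor-form1}. Denote by $\mfN_\alpha$ and $\mfN_\beta$ the $\gamma$-planes of $\alpha^A$ and $\beta^A$, i.e.\ the ($m$-dimensional) kernels of $\alpha^B \gamma \ind{_a_B^A}$ and $\beta^B \gamma \ind{_a_B^A}$. By Remark \ref{rem-spinor-form2} the hypothesis that $\alpha^A,\beta^A$ are non-proportional is equivalent to $\mfN_\alpha \neq \mfN_\beta$, so $\dim(\mfN_\alpha \cap \mfN_\beta) \in \{0,1,\ldots,m-1\}$; suppose it equals $m-k$. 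By Witt's extension theorem, $\SO(2m+1,\C)$ acts transitively on such configurations, so there is a splitting $\mfV = \mfN \oplus \mfN^* \oplus \mfU$ with $\mfN = \sspan(e_1,\ldots,e_m)$, $\mfN^* = \sspan(f_1,\ldots,f_m)$ dual null $m$-planes ($g(e_i,f_j)=\delta_{ij}$) and $\mfU = \sspan(u)$, for which $\mfN_\alpha = \mfN$ and $\mfN_\beta = \sspan(e_1,\ldots,e_{m-k},f_{m-k+1},\ldots,f_m)$, hence $\mfN_\alpha \cap \mfN_\beta = \sspan(e_1,\ldots,e_{m-k})$. Realising $\mfS$ as $\wedge^\bullet\mfN$ as in Remark \ref{rem-spinor-form1}, the annihilation conditions $v\cdot\alpha=0$ for $v\in\mfN_\alpha$ and $v\cdot\beta=0$ for $v\in\mfN_\beta$ --- read off from \eqref{eq-spinor-action} --- pin down, up to scale, $\alpha = e_1\wedge\ldots\wedge e_m \in \wedge^m\mfN$ and $\beta = e_1\wedge\ldots\wedge e_{m-k} \in \wedge^{m-k}\mfN$ (wedging by $e_i$ with $i\le m-k$ forces $\beta$ divisible by $e_1\wedge\ldots\wedge e_{m-k}$, while contracting by $f_j$ with $j\ge m-k+1$ forces $\beta$ independent of $e_{m-k+1},\ldots,e_m$). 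Since the bilinears $\gamma \ind{_{a_1 \ldots a_p}_{AB}}\alpha^A\beta^B$ are $\Spin(2m+1,\C)$-covariant, whether they vanish is unaffected by this normalisation.

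The forward implication is then a short computation. In the model $\mfS = \wedge^\bullet\mfN$, the ($\Spin$-invariant) spinor bilinear form restricts to a perfect pairing $\wedge^p\mfN \times \wedge^{m-p}\mfN \to \wedge^m\mfN \cong \C$ and vanishes between $\wedge^p\mfN$ and $\wedge^q\mfN$ when $p+q\ne m$; consequently $\gamma \ind{_{a_1 \ldots a_p}_{AB}}\alpha^A\beta^B$ is, up to a nonzero scalar, the $\wedge^0\mfN$-component of the Clifford product $\gamma \ind{_{a_1 \ldots a_p}}\cdot\beta$. Each Clifford generator shifts form-degree in $\wedge^\bullet\mfN$ by at most one ($e_i$ by $+1$, $f_j$ by $-1$, $u$ by $0$), so the Clifford filtration puts $\gamma \ind{_{a_1 \ldots a_p}}\cdot\beta$ in $\bigoplus_{|r-(m-k)|\le p}\wedge^r\mfN$, which has vanishing $\wedge^0\mfN$-component whenever $p<m-k$; hence $\gamma \ind{_{a_1 \ldots a_p}_{AB}}\alpha^A\beta^B=0$ for $p<m-k$. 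For $p=m-k$, contracting the free vector indices of $\gamma \ind{_{a_1 \ldots a_{m-k}}}$ with $f_1,\ldots,f_{m-k}$ (pairwise orthogonal, so their Clifford product equals the multivector $f_1\wedge\ldots\wedge f_{m-k}$) acts on $\beta$ as $\pm (f_1\wedge\ldots\wedge f_{m-k})\lrcorner(e_1\wedge\ldots\wedge e_{m-k}) = \pm 1 \in \wedge^0\mfN$, so $\gamma \ind{_{a_1 \ldots a_{m-k}}_{AB}}\alpha^A\beta^B \ne 0$. This proves the forward direction for every $k\in\{1,\ldots,m\}$.

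The converse follows formally. Assume $\gamma \ind{_{a_1 \ldots a_p}_{AB}}\alpha^A\beta^B=0$ for $p<m-k$ and $\gamma \ind{_{a_1 \ldots a_{m-k}}_{AB}}\alpha^A\beta^B\ne0$, and set $m-k' := \dim(\mfN_\alpha\cap\mfN_\beta) \in \{0,\ldots,m-1\}$. Applying the forward direction to $k'$ gives $\gamma \ind{_{a_1 \ldots a_p}_{AB}}\alpha^A\beta^B=0$ for $p<m-k'$ and $\gamma \ind{_{a_1 \ldots a_{m-k'}}_{AB}}\alpha^A\beta^B\ne0$. Comparing the smallest $p$ at which the bilinear is nonzero, the hypothesis forces $m-k'\ge m-k$ and the forward statement forces $m-k\ge m-k'$; hence $k=k'$ and $\dim(\mfN_\alpha\cap\mfN_\beta)=m-k$.

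I expect the main obstacle to be the reduction in the first step: justifying that the two $\gamma$-planes together with their intersection can be brought simultaneously into the stated standard position. This is an application of Witt's extension theorem for the nondegenerate quadratic space $(\mfV,g)$ --- so that $\SO(2m+1,\C)$ acts transitively on pairs of maximal totally null subspaces with prescribed intersection dimension --- but it must be set up carefully in odd dimension, along with the (routine but notationally delicate) identification of $\alpha^A$, $\beta^A$ with the explicit multivectors and the precise matching of $\gamma \ind{_{a_1 \ldots a_p}_{AB}}$ with the Clifford action paired through the spinor form. The remaining steps are straightforward.
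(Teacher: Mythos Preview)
The paper does not supply a proof of this proposition: it is stated as a classical result of Cartan with a citation to \cite{Cartan1981}, and the text moves directly on to the corollaries (Proposition~\ref{prop-pure-intersect2} and Corollary~\ref{cor-pure-intersect}). So there is no ``paper's own proof'' to compare against.

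Your argument is correct and self-contained. The reduction step via Witt's theorem is the right move, and your concern about it is unwarranted in this setting: over $\C$ with a nondegenerate form, $\OO(2m+1,\C)$ acts transitively on pairs of maximal isotropic subspaces with prescribed intersection dimension, and since $\OO(2m+1,\C) = \SO(2m+1,\C) \times \{\pm \Id\}$ with $-\Id$ acting trivially on subspaces, the same transitivity holds for $\SO(2m+1,\C)$. The identification of $\alpha$ and $\beta$ as $e_1\wedge\cdots\wedge e_m$ and $e_1\wedge\cdots\wedge e_{m-k}$ in the exterior-algebra model is exactly as in Remarks~\ref{rem-spinor-form1}--\ref{rem-spinor-form2}, and your degree-filtration argument for the vanishing range together with the explicit nonvanishing at $p=m-k$ (contracting with $f_1,\ldots,f_{m-k}$) is clean. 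The converse by comparing first-nonvanishing degrees is the standard pigeonhole and is fine. One small point worth making explicit: the proposition implicitly assumes $\alpha^A$ and $\beta^A$ are \emph{pure} (otherwise the associated $\gamma$-planes need not be $m$-dimensional); your proof uses this from the outset, which is consistent with the paper's usage.
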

As a consequence of Lemma \ref{lem-technical}, we have, in the special case when $k=1,2$, the equivalent characterisations.
\begin{prop}\label{prop-pure-intersect2}
Let $\alpha^A$ and $\beta^A$ be two spinors not proportional to each other. Then 
\begin{itemize}
\item the $\gamma$-planes associated to $\alpha^A$ and $\beta^A$ intersect in a totally null $(m-1)$-plane if and only if
\begin{align}\label{eq-intersect-m-1-short}
  \alpha \ind{^a^A} \beta \ind*{_a^B} & = \alpha^A \beta^B - 2 \, \beta^A \alpha^B = - \alpha^{(A} \beta^{B)} + 3 \, \alpha^{[A} \beta^{B]} \, ;
 \end{align}
\item the $\gamma$-planes associated to $\alpha^A$ and $\beta^A$ intersect in a totally null $(m-k)$-plane where $k=1$ or $2$, if and only if
\begin{align}\label{eq-intersect-m-2-short}
 \alpha \ind{^a^{(A}} \beta \ind*{_a^{B)}} & = - \alpha \ind{^{(A}} \beta \ind{^{B)}} \, .
\end{align}
\end{itemize}
\end{prop}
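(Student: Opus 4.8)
The plan is to read both equivalences as repackagings, via Lemma~\ref{lem-technical}, of Cartan's criterion in Proposition~\ref{prop-pure-intersect}. Write $\mfN_\alpha := \ker(\alpha_a{}^A)$ and $\mfN_\beta := \ker(\beta_a{}^A)$ for the two $\gamma$-planes, and for $0\le p\le m$ set $\Phi_p := \gamma_{a_1\ldots a_p AB}\,\alpha^A\beta^B$, a $p$-form whose free indices I suppress. Proposition~\ref{prop-pure-intersect} says $\mfN_\alpha\cap\mfN_\beta$ is $(m-k)$-dimensional iff $\Phi_p=0$ for all $p<m-k$ and $\Phi_{m-k}\neq 0$. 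I shall use that $\gamma_{a_1\ldots a_p AB}$ is symmetric in $AB$ exactly when $m-p\equiv 0,3\pmod 4$ and skew exactly when $m-p\equiv 1,2\pmod 4$ -- consistent with the list in Proposition~\ref{prop-Cartan_char} -- and that, since $\alpha^A,\beta^A$ are not proportional, $\alpha^{[A}\beta^{B]}\ne 0$, which is a combination of the skew $\gamma_{a_1\ldots a_p}{}^{AB}$, all of which have $p<m$; hence the $\Phi_p$ with $m-p\equiv 1,2\pmod 4$ cannot all vanish. It follows that $\mfN_\alpha\cap\mfN_\beta$ is $(m-1)$-dimensional iff $\Phi_p=0$ for all $p\le m-2$, and is $(m-1)$- or $(m-2)$-dimensional iff $\Phi_p=0$ for all $p\le m-3$.

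The single computation needed is the identity
\[
\gamma_{a_1\ldots a_p AB}\,\alpha^{aA}\beta_a{}^B \;=\; (-1)^{m+p}\,(2p-2m-1)\;\gamma_{a_1\ldots a_p AB}\,\alpha^A\beta^B \;=\;(-1)^{m+p}(2p-2m-1)\,\Phi_p,
\]
which follows by writing $\gamma_{a_1\ldots a_p AB}\alpha^{aA}\beta_a{}^B = \alpha^C\beta^D\,\gamma^a{}_C{}^A\gamma_{a_1\ldots a_p AB}\gamma_a{}_D{}^B$ and inserting the ``in particular'' formula of Lemma~\ref{lem-technical}. Combining this with $\gamma_{a_1\ldots a_p AB}\beta^A\alpha^B=\sigma_p\Phi_p$, where $\sigma_p=\pm1$ according to the symmetry rule above, one may compare the $\Phi_p$-components of the two sides of \eqref{eq-intersect-m-1-short}: for each $p$ one gets $\bigl[(-1)^{m+p}(2p-2m-1)-1+2\sigma_p\bigr]\Phi_p=0$, and a short check on $m-p\bmod 4$ shows this coefficient vanishes precisely for $p=m$ and $p=m-1$ and is nonzero otherwise. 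Hence \eqref{eq-intersect-m-1-short} holds iff $\Phi_p=0$ for all $p\le m-2$, i.e. iff $\mfN_\alpha\cap\mfN_\beta$ is an $(m-1)$-plane. This settles the first bullet.

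For the second bullet, observe first that \eqref{eq-intersect-m-2-short} is, by Proposition~\ref{prop-purity_cond} applied to $\alpha^A+t\beta^A$ together with the purity of $\alpha^A,\beta^A$, equivalent to the statement that every spinor in the pencil $\langle\alpha^A+t\beta^A\rangle$ is pure; expanding the Cartan purity conditions \eqref{eq-purity_cond_Cartan} for $\alpha^A+t\beta^A$ in powers of $t$ then shows this is in turn equivalent to $\Phi_p=0$ for all $p<m$ with $m-p\equiv 0,3\pmod 4$ -- which one also obtains directly by taking symmetric-$AB$ components of \eqref{eq-intersect-m-2-short} and using the identity above (the coefficient $(-1)^{m+p}(2p-2m-1)+1$ now vanishing only at $p=m$). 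It remains to prove that, for pure non-proportional $\alpha^A,\beta^A$, the condition ``$\Phi_p=0$ for all symmetric $p<m$'' forces $\mfN_\alpha\cap\mfN_\beta$ to have codimension at most $2$; one direction being immediate from Proposition~\ref{prop-pure-intersect}, it suffices to show that $\Phi_p\ne0$ whenever $\dim(\mfN_\alpha\cap\mfN_\beta)\le p\le m$: for then a codimension $j_0\ge 3$ would give $\Phi_{m-3}\ne0$ with $m-3$ a symmetric index $<m$, a contradiction.

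To prove this last nonvanishing I would pass to the explicit model of Remark~\ref{rem-spinor-form1}, taking its null structure $\mfN$ to be $\mfN_\alpha$ and choosing a basis $e_1,\ldots,e_m$ of $\mfN_\alpha$ whose first $m-j_0$ vectors span $\mfN_\alpha\cap\mfN_\beta$; by Witt's theorem (equivalently, transitivity of $\Spin(2m+1,\C)$ on pairs of $\gamma$-planes meeting in a fixed dimension) the dual $\mfN^*$ and the line $\mfU$ can be arranged so that $\alpha^A$ is represented by $e_1\wedge\cdots\wedge e_m$ and $\beta^A$ by $e_1\wedge\cdots\wedge e_{m-j_0}$. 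For each $p$ in the required range one then exhibits $p$ Clifford generators whose action sends $\beta$ to a nonzero multiple of $1\in\wedge^0\mfN$ -- hence pairing nontrivially with $\alpha$ -- namely the $m-j_0$ ``core'' generators $e^*_1,\ldots,e^*_{m-j_0}$, which contract $\beta$ to a scalar, padded by $\lfloor q/2\rfloor$ pairs $e_k,e^*_k$ with $k>m-j_0$ and, when $q:=p-(m-j_0)$ is odd, the generator of $\mfU$; on $\beta$ each such pair acts as a nonzero scalar and the $\mfU$-generator as $\pm1$, so no cancellation occurs and $\Phi_p\neq0$. This last step is where I expect the real work to lie: the reduction to normal form and, above all, the bookkeeping of the Clifford action of $\gamma_{a_1\ldots a_p}$ on $\wedge^\bullet\mfN$ through the non-null pairs $(e_k,e^*_k)$ and the $\mfU$-factor. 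Everything else -- the first bullet and the symmetric-part half of the second -- is a routine consequence of the identity extracted from Lemma~\ref{lem-technical}.
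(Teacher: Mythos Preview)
Your approach is correct and matches the paper's: the paper states the proposition ``as a consequence of Lemma~\ref{lem-technical}'' applied to Proposition~\ref{prop-pure-intersect}, and your main computation --- contracting $\alpha^{aA}\beta_a{}^B$ against $\gamma_{a_1\ldots a_p AB}$ via the ``in particular'' formula of Lemma~\ref{lem-technical} and matching coefficients --- is exactly what this means. Your case check of the coefficient $[(-1)^{m+p}(2p-2m-1)-1+2\sigma_p]$ for the first bullet is clean and correct.

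Where you go beyond the paper is in the second bullet: you correctly notice that ``$\Phi_p=0$ for all symmetric $p<m$'' is not \emph{a priori} the same as ``$\Phi_p=0$ for all $p\le m-3$'', since Proposition~\ref{prop-pure-intersect} alone does not rule out the codimension landing on a skew index $j_0\in\{5,6,9,10,\ldots\}$. The paper sidesteps this by implicitly invoking the standard Cartan fact (cf.\ the Remark after Corollary~\ref{cor-pure-intersect}) that a sum of two pure spinors is pure iff their $\gamma$-planes meet in codimension at most $2$; your pencil argument shows the equivalence of \eqref{eq-intersect-m-2-short} with purity of the pencil, and you then supply a direct proof of the standard fact via the form model. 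That proof is sound: the key point is that the Clifford element $e_k\wedge e^*_k$ (for $k>m-j_0$), viewed as $\tfrac12(e_ke^*_k-e^*_ke_k)$, acts on $\beta$ as the nonzero scalar $\tfrac12$, and since your chosen generators split into mutually orthogonal blocks the antisymmetrised product factors up to sign, so no cancellation occurs. This is genuinely more than the paper writes down, but it fills a step the paper leaves to the literature.
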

Finally, in the context of our present notation, we conclude
\begin{cor}\label{cor-pure-intersect}
Let $\xi^A$ be a pure spinor in $(\mfV,g)$ and let $\mfS^{\frac{m-2}{2}} := \im \xi \ind*{_a^A} : \mfV \rightarrow \mfS$ as before. Then
\begin{itemize}
\item Any non-zero spinor in $\mfS^{\frac{m-2}{2}}$ is pure.
\item The $\gamma$-planes associated to any two pure spinors in $\mfS^{\frac{m-2}{2}}$ intersect in a totally null $(m-k)$-plane where $k$ can be either $0$ or $1$ or $2$.
\end{itemize}
\end{cor}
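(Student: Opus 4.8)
The plan is to reduce both statements to a single cancellation inside the Clifford relation. Since $\mfS^{\frac{m-2}{2}} = \im \xi \ind{_a^A}$, every element $\eta^A$ of $\mfS^{\frac{m-2}{2}}$ can be written $\eta^A = \xi \ind{_a^A} V^a$ for some $V^a \in \mfV$; equivalently $\eta = \slV \xi$, where $\slV := V^a \gamma_a$ denotes the Clifford action of $V^a$ on $\mfS$. By Proposition~\ref{prop-purity_cond}, $\eta^A$ is pure exactly when $\eta \ind{^a^C} \eta \ind{_a^D} = -\, \eta^C \eta^D$, and the purity of $\xi^A$ itself reads $\xi \ind{^a^C} \xi \ind{_a^D} = -\, \xi^C \xi^D$. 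The only algebraic input I would use is the Clifford relation \eqref{eq-red_Clifford_property}, in the two equivalent forms $\gamma^a \slV + \slV \gamma^a = -2\, V^a\, \Id_{\mfS}$ and $\slV \slV = -\, g_{ab} V^a V^b\, \Id_{\mfS}$.

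For the first bullet I would compute $\eta \ind{^a^C} \eta \ind{_a^D}$ directly from $\eta = \slV \xi$: on each of the two factors there is a $\gamma$-matrix sitting next to $\slV$, and the anticommutator $\gamma^a \slV + \slV \gamma^a = -2\, V^a\, \Id_{\mfS}$ lets one move that $\gamma$-matrix to the other side of $\slV$. Carrying this out on both factors, the leading term is $\slV \otimes \slV$ applied to $\xi \ind{^a^E} \xi \ind{_a^F} = -\, \xi^E \xi^F$ (the purity of $\xi^A$), i.e.\ $-\, \eta^C \eta^D$; the three remaining terms are each a scalar multiple of $g_{ab} V^a V^b\, \xi^C \xi^D$, with coefficients $-2$, $-2$ and $+4$ --- the first two arising from $\slV \slV = -\, g_{ab} V^a V^b\, \Id_{\mfS}$ and the third from $V^a V_a = g_{ab} V^a V^b$ --- and since $-2 - 2 + 4 = 0$ they cancel. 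Hence $\eta \ind{^a^C} \eta \ind{_a^D} = -\, \eta^C \eta^D$, so $\eta^A$ is pure. The derivation is uniform in $V^a$ (in particular there is no case split according to whether $V^a$ is null), so every spinor in $\mfS^{\frac{m-2}{2}}$ is pure.

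For the second bullet, let $\alpha^A = \xi \ind{_a^A} U^a$ and $\beta^A = \xi \ind{_a^A} V^a$ be two spinors in $\mfS^{\frac{m-2}{2}}$; by the first bullet they are pure, so the $\gamma$-planes in question are genuine $m$-planes. The same manipulation applied to the mixed contraction $\alpha \ind{^a^A} \beta \ind{_a^B}$ --- moving $\gamma^a$ past $\slU$ on the first factor and $\gamma_a$ past $\slV$ on the second --- yields the leading term $-\, \alpha^A \beta^B$ (again by the purity of $\xi^A$) together with correction terms now involving $\slU \slV$, $\slV \slU$ and $g_{ab} U^a V^b$. These no longer all cancel; but on symmetrising over the free spinor indices $A$ and $B$ --- using $\slU \slV + \slV \slU = -2\, g_{ab} U^a V^b\, \Id_{\mfS}$ and that $\xi^A \xi^B$ is symmetric in $A, B$ --- they combine to leave precisely
\[
 \alpha \ind{^a^{(A}} \beta \ind{_a^{B)}} = -\, \alpha^{(A} \beta^{B)} \, .
\]
This is exactly the condition in the second item of Proposition~\ref{prop-pure-intersect2}, so if $\alpha^A$ and $\beta^A$ are not proportional their $\gamma$-planes meet in a totally null $(m-k)$-plane with $k = 1$ or $k = 2$; and if $\alpha^A \propto \beta^A$ the two $\gamma$-planes coincide, giving $k = 0$. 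In every case $k \in \{0,1,2\}$.

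The main obstacle, and the only step demanding genuine care, is the bookkeeping in these two contractions: in the first, confirming that the three terms proportional to $g_{ab} V^a V^b$ occur with coefficients $-2$, $-2$, $+4$ so that they cancel; in the second, tracking which of the $g_{ab} U^a V^b$-terms survive the symmetrisation and checking that those too cancel. The operator/anticommutator organisation above is chosen precisely so that these cancellations are forced and no case analysis --- null versus non-null vectors, or intersections of varying codimension --- is needed. A more pedestrian but equivalent route would be to expand $\eta \ind{^a^C} \eta \ind{_a^D}$ and $\alpha \ind{^a^{(A}} \beta \ind{_a^{B)}}$ fully in abstract indices and reduce the resulting strings of $\gamma$-matrices using Lemma~\ref{lem-technical} together with the purity condition \eqref{eq-purity_cond}.
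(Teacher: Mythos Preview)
Your proof is correct and follows essentially the same approach the paper indicates: the paper omits the argument but states it ``consists in checking the veracity of the algebraic conditions \eqref{eq-purity_cond}, \eqref{eq-intersect-m-1-short} and \eqref{eq-intersect-m-2-short}'', and your Clifford-algebra manipulation verifies precisely \eqref{eq-purity_cond} for the first bullet and \eqref{eq-intersect-m-2-short} for the second, then appeals to Proposition~\ref{prop-pure-intersect2}. Your explicit bookkeeping of the $-2,-2,+4$ cancellation and the symmetrised $\slU\slV+\slV\slU$ cancellation is exactly the computation the paper leaves to the reader.
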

We omit the proof which is essentially the same as in the even-dimensional case and consists in checking the veracity of the algebraic conditions \eqref{eq-purity_cond}, \eqref{eq-intersect-m-1-short} and \eqref{eq-intersect-m-2-short}


\paragraph{Splitting}
It is convenient to choose a splitting of the filtration \eqref{eq-filtration} as
\begin{align}\label{eq-null-grading}
 \mfV & = \mfV_{-1} \oplus \mfV_{0} \oplus \mfV_1 \, ,
\end{align}
where $\mfV_1 := \mfV^{-1}$, and $\mfV_i$ are subspaces such that $\mfV^i = \mfV_i \oplus \mfV^{i+1}$, each linearly isomorphic to $\mfV^i/\mfV^{i+1}$. Now, $\mfV_{-1}$ is a $\gamma$-plane dual to $\mfV^1$ to which we associate a pure spinor $\eta_A$ dual to $\xi^A$, i.e.
$\mfV_{-1} := \ker \eta \ind{_a_A} : \mfV \rightarrow \mfS^*$, where $\eta \ind{_a_A} := \eta_B \gamma \ind{_a_A^B}$. Conversely, any choice of spinor dual to $\xi^A$ induces a splitting \eqref{eq-null-grading}.

For convenience,  we choose $\xi^A$ and $\eta_A$ to satisfy $\xi^A \eta_A = -\frac{1}{2}$, and define
\begin{align}\label{eq-u-h}
u^a & := 2 \, \eta \ind*{^a_A} \xi^A \, , & h \ind{_{ab}} & := g \ind{_{ab}} + u \ind{_a} u \ind{_b} \, .
\end{align}
Then, $u^a$ spans $\mfV_0$, and satisfies $u^a u_a = -1$, $u^a \xi \ind*{_a^A} = \xi^A$ and $u^a \eta \ind{_a_A} = \eta_A$. Further, $h_{ab}$ is a non-degenerate symmetric bilinear form on $\mfV_1 \oplus \mfV_{-1}$, i.e. $h_{ab} u^a = 0$, $h \ind{_a^c} h \ind{_c^b} = h \ind{_a^b}$, and $h \ind{_a^a} = n-1$.

Next, define $\mfS_{-\frac{m-2}{2}} := \{ \im \eta \ind{_a_A} : \mfV \rightarrow \mfS^* \} \cap \{ \ker \xi^A : \C \leftarrow \mfS^* \}$. This is the dual of $\mfS_{\frac{m-2}{2}}$, the complement of $\mfS^{\frac{m}{2}} = \langle \xi^A \rangle$ in $\mfS^{\frac{m-2}{2}}$. Elements of $\mfV_1$ and $\mfV_{-1}$ must be of the form $\xi^{aA} v_A$ and $\eta \ind*{^a_A} w^A$ respectively, for some $v_A$ in $\mfS_{-\frac{m-2}{2}}$ and $w^A$ in $\mfS_{\frac{m-2}{2}}$, i.e.\ $v_A \xi^A =0$ and $w^A \eta_A =0$.

Finally, we introduce the map
\begin{align}\label{eq-identity-map}
I \ind*{_A^B} & := \eta \ind{_a_A} \xi \ind*{^a^B} + \eta \ind{_A} \xi \ind*{^B} \, ,
\end{align}
which can be seen to be the identity element on $\mfS_{\frac{m-2}{2}}$, or dually, on $\mfS_{-\frac{m-2}{2}}$. In particular $\xi^A I \ind*{_A^B} = \eta_B I \ind*{_A^B}= 0$.

\subsection{The stabiliser of a projective pure spinor in $\so(2m+1,\C)$ for $m>1$}
In what follows, the Lie algebra $\g :=\so(2m+1,\C)$ will be freely identified with $\wedge^2 \mfV$ or $\wedge^2 \mfV^*$. At this stage, we also assume $m>1$, the case $m=1$ being treated briefly in section \ref{sec-3dim}.
\paragraph{Filtration}
The filtration \eqref{eq-filtration} induces a filtration of vector subspaces
there is a filtration 
\begin{align}\label{eq-filtration-g}
 \{ 0 \} =: \g^3 \subset \g^2 \subset \g^1 \subset \g^0 \subset \g^{-1} \subset \g^{-2} := \g \, ,
\end{align}
on $\g$, where
\begin{align*}
 \g^{-1} & := \{ \phi \ind{_{ab}} \in \g : \xi \ind*{^a ^{\lb{A}}} \xi \ind*{^b ^B} \phi \ind{_{a b}} \xi \ind*{^{\rb{C}}} = 0 \} \, ,&
 \g^0 & := \{ \phi \ind{_{ab}} \in \g : \xi \ind*{^a ^A} \xi \ind*{^b ^B} \phi \ind{_{a b}} = 0 \} \, , \\
 \g^1 & := \{ \phi \ind{_{ab}} \in \g : \xi \ind*{^a ^{\lb{A}}} \phi_{ab} \xi \ind*{^{\rb{B}}} = 0 \} \, , &
 \g^2 & := \{ \phi \ind{_{ab}} \in \g : \xi \ind*{^a ^A} \phi_{ab} = 0 \} \, .
\end{align*}
The Lie bracket $[ \cdot , \cdot ]:\g \times \g \rightarrow \g$ on $\g$ is compatible with this filtration, i.e. $[\g^i,\g^j] \subset \g^{i+j}$, with the convention that $\g^i = \{ 0 \}$ for $i \geq 3$, and $\g^i = \g$ for all $i \leq -2$, i.e. $\g$ is a filtered Lie algebra.

Using the useful identities
\begin{align}
 \phi \ind{_{ab}} \xi \ind*{^a^A} \xi \ind*{^b^B} \gamma \ind{^c_A^C} \gamma \ind{_c_B^D} & = - \phi \ind{_{ab}} \left( \xi \ind*{^a^C} \xi \ind*{^b^D} + 4 \, \xi \ind*{^{ab}^{\lb{C}}} \xi \ind*{^{\rb{D}}} \right) \, , &
 \phi \ind{_{ab}} \xi \ind*{^{ab}^A} \gamma \ind{^c_A^C} \xi \ind*{_c^D} & = - \phi \ind{_{ab}} \left( \xi \ind*{^{ab}^C} \xi \ind*{^D} + 4 \, \xi \ind*{^a^C} \xi \ind*{^b^D} \right) \, , \label{eq-useful-id}
\end{align}
or otherwise, one can show $\phi \ind{_{ab}} \xi \ind*{^a^A} \xi \ind*{^b^B} = 0$ if and only if $\phi \ind{_{ab}} \xi \ind*{^{ab}^{\lb{A}}} \xi \ind*{^{\rb{B}}} = 0$, and conclude:

\begin{prop}
The Lie subalgebra $\prb := \g^0$ is the stabilizer of $[ \xi^A ]$, i.e. $\phi \ind{_{ab}} \xi \ind*{^{ab}^A}  \propto \xi \ind*{^A}$.
\end{prop}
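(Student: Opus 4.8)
The plan is first to recognise the left-hand side of the displayed equation: $\phi\ind{_{ab}}\xi\ind{^{ab}^A} = \phi\ind{_{ab}}\gamma\ind{^{ab}_B^A}\xi^B$ is, up to the usual numerical factor, the action on $\xi^A$ of $\phi \in \wedge^2\mfV \cong \g$ coming from the embedding $\wedge^2\mfV \hookrightarrow \Cl(\mfV,g)$. Thus "$\phi$ stabilises $\langle\xi^A\rangle$" means precisely $\phi\ind{_{ab}}\xi\ind{^{ab}^A}\propto\xi^A$, and the content of the proposition is that this holds if and only if $\xi\ind{^a^A}\xi\ind{^b^B}\phi\ind{_{ab}} = 0$, i.e.\ $\phi\in\g^0$. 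A preliminary remark: since $\phi\ind{_{ab}}$ is skew, the Clifford relation \eqref{eq-red_Clifford_property} lets one replace $\gamma\ind{^{ab}}$ by $\gamma^a\gamma^b$, so $\phi$ acts on $\xi$ by two successive Clifford multiplications.

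Next I would bring in the splitting $\mfV = \mfV_{-1}\oplus\mfV_0\oplus\mfV_1$ of \eqref{eq-null-grading} (with $\mfV_1 = \mfV^1$ and $\dim\mfV_0 = 1$) and the induced grading $\g = \wedge^2\mfV = \g_{-2}\oplus\g_{-1}\oplus\g_0\oplus\g_1\oplus\g_2$, where $\g_{-2} = \wedge^2\mfV_{-1}$, $\g_{-1} = \mfV_{-1}\wedge\mfV_0$, $\g_0 = \mfV_{-1}\wedge\mfV_1$, $\g_1 = \mfV_0\wedge\mfV_1$, $\g_2 = \wedge^2\mfV_1$; comparing with the defining conditions of the $\g^i$ one checks $\g^0 = \g_0\oplus\g_1\oplus\g_2$, the non-negative part. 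Writing $\phi = \phi_{-2}+\dots+\phi_2$ accordingly and using the three facts (i) $w\cdot\xi = 0$ for every $w\in\mfV_1 = \mfV^1$, (ii) $u\cdot\xi = \xi$ for the distinguished $u^a\in\mfV_0$, (iii) $\mfV_{-1}$ is totally null and orthogonal to $\mfV_0$ while $\mfV_0\perp\mfV^1$, together with $(v\wedge w)\cdot\xi = v\cdot w\cdot\xi - g(v,w)\xi$, a short computation yields: $\phi_1\cdot\xi = \phi_2\cdot\xi = 0$; $\phi_0\cdot\xi\in\langle\xi\rangle$; $\phi_{-1}\cdot\xi$ is the image under $\xi\ind{_a^A}$ of the $\mfV_{-1}$-part of $\phi_{-1}$, lying in $\mfS^{\frac{m-2}{2}}$ but not in $\langle\xi\rangle$; and $\phi_{-2}\cdot\xi = v\cdot v'\cdot\xi$ for $v,v'\in\mfV_{-1}$, lying in a strictly deeper step of the filtration of $\mfS$. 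Moreover $\phi_{-1}\mapsto\phi_{-1}\cdot\xi$ and $\phi_{-2}\mapsto\phi_{-2}\cdot\xi$ are injective on $\g_{-1}$ and $\g_{-2}$: the slickest way to see this, and to pin down the filtration steps, is the model $\mfS\cong\wedge^\bullet\mfN$ of Remark \ref{rem-spinor-form1} with $\mfN = \mfV_{-1}$, in which $\xi$ is a multiple of $1\in\wedge^0\mfN$ and Clifford multiplication by $v\in\mfV_{-1}$ is just $v\wedge(-)$, so that $\phi_{-1}\cdot\xi\in\wedge^1\mfN$ and $\phi_{-2}\cdot\xi\in\wedge^2\mfN$ are recovered tautologically.

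The conclusion then falls out by degree: since $\phi_0\cdot\xi$, $\phi_{-1}\cdot\xi$, $\phi_{-2}\cdot\xi$ lie in distinct graded pieces of $\mfS$ and only the first is contained in $\langle\xi\rangle$, one has $\phi\cdot\xi\propto\xi$ iff $\phi_{-1}\cdot\xi = \phi_{-2}\cdot\xi = 0$ iff (by the injectivity above) $\phi_{-1} = \phi_{-2} = 0$ iff $\phi\in\g_0\oplus\g_1\oplus\g_2 = \g^0$. This is the equivalence sought, and it exhibits $\prb = \g^0$ as the stabiliser of $\langle\xi^A\rangle$. To keep the argument self-contained one also checks the matching $\g^0 = \g_0\oplus\g_1\oplus\g_2$ used above: on $\g_0\oplus\g_1\oplus\g_2$ each wedge factor in $\mfV^1$ is annihilated by $\xi\ind{_a^A}$, so $\xi\ind{^a^A}\xi\ind{^b^B}\phi\ind{_{ab}} = 0$; conversely $\xi\ind{^a^A}\xi\ind{^b^B}(\phi_{-1})\ind{_{ab}} = 2(v\cdot\xi)^{[A}\xi^{B]}$ and $\xi\ind{^a^A}\xi\ind{^b^B}(\phi_{-2})\ind{_{ab}} = 2(v\cdot\xi)^{[A}(v'\cdot\xi)^{B]}$ are non-zero when $\phi_{-1},\phi_{-2}\neq 0$ and lie in different bidegrees of $\mfS\otimes\mfS$, so $\xi\ind{^a^A}\xi\ind{^b^B}\phi\ind{_{ab}} = 0$ forces $\phi_{-1} = \phi_{-2} = 0$.

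The step I expect to cost the most is the bookkeeping of which graded subspace of $\mfS$ receives each $\phi_k\cdot\xi$, together with the two faithfulness claims; these are transparent in the $\wedge^\bullet\mfN$ picture but require that translation. A more computational alternative, in the spirit of the rest of the section, is to contract $\phi\ind{_{ab}}\xi\ind{^{ab}^A}$ with $\xi\ind{_c^C}$, expand the triple $\gamma$-product with Lemma \ref{lem-technical}, and invoke the purity identity \eqref{eq-purity_cond} and the defining relation of $\g^0$ directly; this sidesteps the splitting but is algebraically heavier.
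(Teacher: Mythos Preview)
Your argument is correct and takes a genuinely different route from the paper's. The paper gives a short computational proof: it reduces the claim to the equivalence $\phi_{ab}\xi^{aA}\xi^{bB}=0 \Leftrightarrow \phi_{ab}\xi^{ab[A}\xi^{B]}=0$ and establishes each direction by contracting with further $\gamma$-matrices, using the pair of identities
\begin{align*}
\phi_{ab}\,\xi^{aA}\xi^{bB}\,\gamma\ind{^c_A^C}\gamma\ind{_c_B^D} &= -\phi_{ab}\bigl(\xi^{aC}\xi^{bD}+4\,\xi^{ab[C}\xi^{D]}\bigr)\,,\\
\phi_{ab}\,\xi^{abA}\,\gamma\ind{^c_A^C}\xi\ind{_c^D} &= -\phi_{ab}\bigl(\xi^{abC}\xi^D+4\,\xi^{aC}\xi^{bD}\bigr)\,,
\end{align*}
which come straight out of Lemma~\ref{lem-technical}. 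This is precisely the ``computational alternative'' you sketch in your final paragraph.

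Your approach instead invokes the splitting $\mfV=\mfV_{-1}\oplus\mfV_0\oplus\mfV_1$ and the induced $|2|$-grading on $\g$, then reads off how each $\g_i$ acts on $\xi$ in the form model $\mfS\cong\wedge^\bullet\mfN$. What this buys you is conceptual transparency: the stabiliser condition becomes a degree statement, and the injectivity of $\g_{-1}\to\mfS$ and $\g_{-2}\to\mfS$ is tautological once one sees Clifford multiplication by $\mfV_{-1}$ as wedging. The price is that you rely on the splitting and the explicit $\wedge^\bullet\mfN$ realisation, both of which the paper introduces only \emph{after} this proposition; the paper's identity-based proof stays entirely within the filtered picture and the pure-spinor calculus already in hand. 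Two cosmetic remarks: your convention $\xi\in\wedge^0\mfN$ with $\mfN=\mfV_{-1}$ is dual to the paper's later identification $\mfS_{m/2}\cong\wedge^m\mfN$ (both are fine); and the phrase ``each wedge factor in $\mfV^1$'' for $\g_0$ should read ``at least one factor in $\mfV^1$'', which is all you need.
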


%

The stabiliser $\prb$ of $[\xi^A]$ is a \emph{parabolic Lie subalgebra} of $\g$ \cites{Fulton1991,vCap2009}.

\paragraph{Splitting}
Splitting \eqref{eq-filtration-g} yields a $|2|$-grading $\g = \g_{-2} \oplus \g_{-1} \oplus \g_0 \oplus \g_1 \oplus \g_2$ with $[ \g_i , \g_j ] \subset \g_{i+j}$, for all $i,j$, with the convention that $\g_i = \{ 0 \}$ for all $|i|>2$. In relation to \eqref{eq-null-grading}, we have $\g_{\pm2} \cong \wedge^2 \mfV_{\pm1}$, $\g_{\pm1} \cong \mfV_0 \otimes \mfV_{\pm1}$ and $\g_0 \cong \mfV_{-1} \otimes \mfV_1$.
The Lie subalgebra $\g_0$ is isomorphic to $\glie(m,\C)$, and thus splits further as $\g_0 = \mfz_0 \oplus \slie_0$ where $\mfz_0$ is the center $\g_0$ and $\slie_0 = \slie(m,\C)$ is the simple part of $\g_0$. The center is spanned by the \emph{grading element} $E \ind{_{ab}} := - 2 \, \xi \ind*{_{\lb{a}}^A} \eta \ind{_{\rb{b}}_A}$, with image in $\Cl(\mfV,g)$ given by $E \ind{_A^B} := - \frac{1}{4} E \ind{_{ab}} \gamma \ind{^{ab}_A^B}$. For consistency with \cite{Taghavi-Chabert2016}, we also set
\begin{align}\label{eq-omega}
\omega \ind{_{ab}} := E \ind{_{ab}} = - 2 \, \xi \ind*{_{\lb{a}}^A} \eta \ind{_{\rb{b}}_A} \, .
\end{align}
An element $\phi_{ab}$ of $\slie_0$ can then be written as $\phi_{ab} = 2 \, \xi \ind*{_{\lb{a}}^A} \eta \ind{_{\rb{b}}_B} \phi \ind{_A^B}$ for some \emph{tracefree} $\phi \ind{_A^B} \in \mfS_{-\frac{m-2}{2}} \otimes \mfS_{\frac{m-2}{2}}$ in the sense that $\phi \ind{_A^B} I \ind*{_B^A} = 0$ where $I \ind*{_B^A}$ is defined by \eqref{eq-identity-map}.

\paragraph{Parabolic Lie subgroups}
At the group level, we denote by $P$ the stabiliser of $[\xi^A]$ in $G$. This is a parabolic Lie subgroup of $G$ with Lie algebra $\prb$. Its Levy decomposition is given by $P = G_0 \ltimes P_+$, where the image $G_0$ in $G \rightarrow \SO(2m+1,\C)$ under the covering map is the complex general linear group $\GL(m,\C)$, and $P_+$ is the nilpotent Lie group generated by $\g_1 \oplus \g_2$. All our $\prb$- and $\g_0$-modules will also be $P$- and $G_0$-modules. The spinor calculus developed here is then manifestly $P$-invariant.

\paragraph{Associated graded vector space}
We now introduce the associated graded $\prb$-module $\gr(\g) = \bigoplus_{i=-2}^2 \gr_i (\g)$ where $\gr_i (\g) = \g^i/\g^{i+1}$. Each $\gr_i(\g)$ is linear isomorphic to the $\g_0$-module $\g_i$, and we have a direct sum decomposition $\gr_0(\g) = \g_0^0 \oplus \g_0^1$, where $\g_0^0 := \left(\g^1 + \mfz_0 \right) / \g^1$ and $\g_0^1  := \left(\g^1 + \slie_0 \right) / \g^1$. Writing $\xi \ind*{_{ab}^A} := \xi \ind*{^B} \gamma \ind{_{ab}_B^A} : \wedge^2 \mfV \rightarrow \mfS$ and $\mfS^{\frac{m-4}{2}} := \im \xi \ind*{_{ab}^A} : \wedge^2 \mfV \rightarrow \mfS$, we can define
\begin{align*}
{}^\g  _\xi \Pi^0_0 ( \phi )  & := \xi \ind*{^{ab} ^A} \phi_{ab} \, , &
 {}^\g _\xi \Pi^1_0 ( \phi )  & := \xi \ind*{^a ^{\lb{A}}} \phi_{ab} \xi \ind*{^{\rb{B}}} + \frac{1}{n-1} \xi \ind*{^{ac} ^C} \phi _{ac} \gamma \ind{_b _C ^{\lb{A}}} \xi \ind*{^{\rb{B}}} \, .
\end{align*}
Then $\g^1 + \mfz_0 = \{ \phi_{ab} \in \g : {}^\g  _\xi \Pi^1_0 ( \phi ) = 0 \}$ and $\g^1 + \slie_0 = \{ \phi_{ab} \in \g : {}^\g  _\xi \Pi^0_0 ( \phi ) = 0 \}$. For convenience, we also set $\g_i^0 := \gr_i (\g)$ for $i=\pm1,\pm2$.

\subsection{Generalisation}
As explained in \cite{Taghavi-Chabert2016}, the parabolic subalgebra $\prb$ induces a filtration $\{ \mfM^i \}$ of indecomposable $\prb$-modules on any finite $\g$-module $\mfM$. We can split the filtration as a direct sum of $\g_0$-modules $\mfM_i$ isomorphic to $\mfM^i/\mfM^{i+1}$, on which the grading element $E$ acts diagonalisably with eigenvalue $i$. Each $\mfM^i/\mfM^{i+1}$, respectively $\mfM_i$, splits into a direct sum of irreducible $\prb$-submodules $\mfM_i^j$, respectively $\g_0$-submodules $\breve{\mfM}_i^j$, with $\mfM_i^j \cong \breve{\mfM}_i^j$ as vector spaces. We record the action of $\g_1 \subset \prb$ on each $\breve{\mfM}_i^j$ by an arrow as in \cite{Taghavi-Chabert2016}.


To deal with the spinor representation $\mfS$, we define the maps
$\xi \ind*{_{a_1 \ldots a_k}^A} := \xi \ind*{^B} \gamma \ind{_{a_1 \ldots a_k}_B ^A} : \wedge^k \mfV \rightarrow \mfS$ for $k=1, \ldots, 2m+1$. Then the spinor module $\mfS \cong \mfS^*$ admits a $P$-invariant filtration
\begin{align*}
  \mfS^{\frac{m}{2}} \subset \mfS^{\frac{m-2}{2}} \subset \ldots \subset \mfS^{-\frac{m-2}{2}} \subset \mfS^{-\frac{m}{2}}  := \mfS \, ,
\end{align*}
where $\mfS^{\frac{m}{2}} = \langle \xi^A \rangle$, $\mfS^{\frac{m-2k}{2}} := \im \xi \ind*{_{a_1 \ldots a_k}^A} : \wedge^k \mfV \rightarrow \mfS$, $\mfS^{-\frac{m-2}{2}} = \ker \xi^A : \C \leftarrow \mfS^*$ and $\mfS^{-\frac{m-2k-2}{2}} = \ker \xi \ind*{_{a_1 \ldots a_k} ^A} : \wedge^k \mfV^* \leftarrow \mfS^*$ for $k=1, \ldots m$. Further, we can choose subspaces $\mfS_i \subset \mfS^i$ such that $\mfS^i = \mfS_i \oplus \mfS^{i+1}$ such that
\begin{align*}
 \mfS & = \mfS_{\frac{m}{2}} \oplus \mfS_{\frac{m-2}{2}} \oplus \ldots \oplus \mfS_{-\frac{m-2}{2}} \oplus \mfS_{-\frac{m}{2}} \, .
\end{align*}
The grading element $E \ind{_{ab}}$ in $\mfz_0$ and the spanning element $u_a$ of $\mfV_0$ have eigenvalues $\frac{m-2k}{2}$ and $(-1)^k$ respecively on $\mfS_{\frac{m-2k}{2}}$. This description is consistent with the identification of $\mfS$ with $\wedge^\bullet \mfN$.


\subsection{Null Grassmanians}
The space of all null structures or $\gamma$-planes in $(\mfV,g)$ is the null (or isotropic) Grassmanian $\Gr_m (\mfV,g)$. Proposition \ref{prop-fundamental} allows us to identify $\Gr_m (\mfV,g)$ as the space of projective pure spinors of $(\mfV,g)$. This is a compact complex subvariety of $\Pp \mfS$ defined by the purity conditions \eqref{eq-purity_cond_Cartan}, and it is isomorphic to the $\frac{1}{2}m(m+1)$-dimensional homogeneous space $G/P$. When $m=1,2$, this space is isomorphic to the complex projective space $\CP^{\frac{1}{2}m(m+1)}$.


\subsection{Real pure spinors}\label{sec-real}
When  $\mfV$ is a real $(2m+1)$-dimensional vector space equipped with a definite or indefinite non-degenerate symmetric bilinear form of signature $(p,q)$, the spinor representation is complex and equipped with a real or quaternionic structure, by means of which a (complex) pure spinor $\xi$ is sent to its complex conjugate $\overline{\xi}$, and correspondingly, its associated (complex) null structure $\mfN_\xi$ to its complex conjugate  $\overline{\mfN}_{\overline{\xi}}$. In contrast to even dimensions, the \emph{real index} $r$ of $\xi$, being the dimension of $\mfN_\xi \cap \overline{\mfN}_{\overline{\xi}}$, can take any integer value from $0$ to $\min(p,q)$ -- see \cite{Kopczy'nski1992}. When $g$ is positive definite, $r$ is always $0$, and $\mfN_\xi$ defines a metric-compatible CR structure, also referred to, rather inappropriately, as a \emph{contact Riemannian structure}. When $g$ is of Lorentzian signature, i.e.\ $(1,2m)$ or $(2m,1)$, $r$ may be $0$ or $1$. In the latter case, one obtains a \emph{Robinson structure} \cites{Nurowski2002,Trautman2002,Taghavi-Chabert2011,Taghavi-Chabertb}. When $g$ has signature $(m,m+1)$ or $(m+1,m)$, and $r=m$, we obtain a totally real analogue of the above discussion, i.e.\ $\xi$, $\mfN_\xi$ and the stabiliser $P$ of $[\xi]$ in the connected identity component of $\Spin(m,m+1)$ are all real.

\section{Decomposition of the intrinsic torsion}\label{sec-intrinsic-torsion}
Define the $\prb$-module $\mfW := \mfV \otimes \left( \g / \prb \right)$, where as before $\g := \so(2m+1,\C)$, $\mfV$ its standard representation, and $\prb \subset \g$ stabilises a projective pure spinor $[ \xi^A ]$. We assume $m>1$, leaving the case $m=1$ to appendix \ref{sec-3-5dim}.

\begin{rem}
In what follows, $\circledcirc$ denotes the Cartan product, and $\g_0$-modules and $\prb$-modules are abbreviated to $\g_0$-mod and $\prb$-mod respectively.
\end{rem}

\begin{prop}\label{prop-intorsion-class-alg}
The $\prb$-module $\mfW$ admits a filtration
\begin{align*}
 \mfW^0 \subset \mfW^{-1} \subset \mfW^{-2} \subset \mfW^{-3} \, ,
\end{align*}
where
\begin{align*}
 \mfW^{-3} & := \mfV^{-1} \otimes \left( \g^{-2} / \g^0 \right) \, , & \mfW^{-2} & := \left( \mfV^{-1} \otimes \left( \g^{-1} / \g^0 \right) \right) \oplus \left( \mfV^0 \otimes \left( \g^{-2} / \g^0 \right) \right) \, , \\
 \mfW^{-1} & := \left( \mfV^0 \otimes \left( \g^{-1} / \g^0 \right) \right) \oplus \left( \mfV^1 \otimes \left( \g^{-2} / \g^0 \right) \right) \, , & \mfW^0 & :=  \mfV^1 \otimes \left( \g^{-1} / \g^0 \right) \, .
\end{align*}
The associated graded $\prb$-module
\begin{align*}
\gr(\mfW) & = \gr_{-3} (\mfW) \oplus \gr_{-2} (\mfW) \oplus \gr_{-1} (\mfW) \oplus \gr_0 (\mfW)
\end{align*}
decomposes into a direct sum
\begin{align*}
 \gr_{-3}(\mfW) & \cong \mfW_{-3}^0 \oplus \mfW_{-3}^1 \, , & \gr_{-2}(\mfW) & \cong \mfW_{-2}^0 \oplus \mfW_{-2}^1 \oplus \mfW_{-2}^2 \, , \\
 \gr_{-1}(\mfW) & \cong \mfW_{-1}^0 \oplus \mfW_{-1}^1 \oplus \mfW_{-1}^2 \, , & \gr_0 (\mfW) & \cong \mfW_0^0 \oplus \mfW_0^1 \, ,
\end{align*}
of irreducible $\prb$-modules as described below
\vspace{-6.5mm}
\begin{center}
\begin{minipage}[b]{0.45\linewidth}\centering
\begin{displaymath}
{\renewcommand{\arraystretch}{1.8}
\begin{array}{||c|c|c||}
\hline
\text{$\prb$-mod} & \text{$\g_0$-mod} & \text{Dimension} \\
\hline
\mfW_{-3}^0 & \wedge^3 \mfV_{-1} 
& {\frac{1}{3!}m(m-1)(m-2)} \\
\mfW_{-3}^1 & \mfV_{-1} \circledcirc \g_{-2} 
& {\frac{1}{3}m(m^2-1) } \\
\hline
\mfW_{-2}^0 & \mfV_0 \otimes \g_{-2} 
& {\frac{1}{2}m(m-1) } \\
\mfW_{-2}^1 & \wedge^2 \mfV_{-1} \otimes \mfV_0 
& {\frac{1}{2}m(m-1) } \\
 \mfW_{-2}^2 & \odot^2 \mfV_{-1} \otimes \mfV_0 
& {\frac{1}{2}m(m+1) } \\
\hline
\end{array}
}
\end{displaymath}
\end{minipage}
\begin{minipage}[b]{0.45\linewidth}\centering
\begin{displaymath}
{\renewcommand{\arraystretch}{1.8}
\begin{array}{||c|c|c||}
\hline
\text{$\prb$-mod} & \text{$\g_0$-mod} & \text{Dimension} \\
\hline
\mfW_{-1}^0 & \mfz_0 \otimes \mfV_{-1} 
& m \\
\mfW_{-1}^1 & \slie_0 \circledcirc \mfV_{-1} 
& \frac{1}{2}m(m+1)(m-2) \\
\mfW_{-1}^2 & \mfV_0 \otimes \g_{-1}
& m \\
\hline
\mfW_0^0 & \mfz_0 \otimes \mfV_0
& 1 \\
\mfW_0^1 & \slie_0 \otimes \mfV_0
& m^2-1 \\
\hline
\end{array}
}
\end{displaymath}
\end{minipage}
\end{center}
with the proviso that $\mfW_{-1}^1$, $\mfW_{-3}^0$ occur only when $m>2$.

Further,
\begin{align*}
\mfW_i^j & = \left\{ \Gamma_{abc} \xi^{bB} \xi^{cC} \in \mfW^i : {}^\mfW _\xi \Pi_i^k (\Gamma) = 0 \, , \mbox{for all $k \neq j$} \right\} / \mfW^{i+1} \, , & i & =-3,-2,-1,0 \, ,
\end{align*}
where
\begin{align*}
 {}^\mfW _\xi \Pi_{-3}^0 (\Gamma) & := \Gamma \ind{_{abc}} \xi \ind*{^{a[A}} \xi \ind*{^{bB}} \xi \ind*{^{cC}} \xi \ind*{^{D]}} \, , \\
 {}^\mfW _\xi \Pi_{-3}^1 (\Gamma) & := \Gamma \ind{_{abc}} \xi \ind*{^{[A}} \xi \ind*{^{aB]}} \xi \ind*{^{b[C}} \xi \ind*{^{D]}} \xi \ind*{^{c[E}} \xi \ind*{^{F]}} + \Gamma \ind{_{abc}} \xi \ind*{^{[C}} \xi \ind*{^{aD]}} \xi \ind*{^{b[A}} \xi \ind*{^{B]}} \xi \ind*{^{c[E}} \xi \ind*{^{F]}} \, , \\
 {}^\mfW _\xi \Pi_{-2}^0 (\Gamma) & := \Gamma \ind{_{abc}} \xi^{aA} \xi^{b[B} \xi^{cC} \xi^{D]} \, , \\
 {}^\mfW _\xi \Pi_{-2}^1 (\Gamma) & := \Gamma \ind{_{abc}} \xi \ind*{^{a[A}} \xi \ind*{^{bcB}} \xi \ind*{^{C]}} \, , \\
 {}^\mfW _\xi \Pi_{-2}^2 (\Gamma) & := \Gamma \ind{_{abc}} \xi \ind*{^{[A}} \xi \ind*{^{aB]}} \xi \ind*{^{b[C}} \xi \ind*{^{D]}} \xi \ind*{^{cE}} + \Gamma \ind{_{abc}} \xi \ind*{^{[C}} \xi \ind*{^{aD]}} \xi \ind*{^{b[A}} \xi \ind*{^{B]}} \xi \ind*{^{cE}} \, , \\
 {}^\mfW _\xi \Pi_{-1}^0 (\Gamma) & := 
 2 \, \gamma \ind{^a_D^A} \Gamma \ind{_{abc}} \xi \ind*{^b^D} \xi \ind*{^c^{[B}} \xi^{C]} - \Gamma \ind{_{abc}} \xi \ind*{^a^A} \xi \ind*{^{bc}^{[B}} \xi \ind*{^{C]}} \, , \\
 {}^\mfW _\xi \Pi_{-1}^1 (\Gamma) & :=
 \Gamma \ind{_{abc}} \xi \ind*{^{b[B}} \xi \ind*{^{cC}} \xi \ind*{^{D]}} + \frac{1}{2(m-1)} \gamma \ind{_a_E^{[B|}} \left(  2 \, \gamma \ind{^d_F^E} \Gamma \ind{_{dbc}} \xi \ind*{^b^F} \xi \ind*{^c^{|C}} - \Gamma \ind{_{dbc}} \xi \ind*{^d^E} \xi \ind*{^{bc}^{|C}} \right) \xi \ind*{^{D]}} \, , \\
 {}^\mfW _\xi \Pi_{-1}^2 (\Gamma) & := \Gamma \ind{_{abc}} \xi \ind*{^{aA}} \xi \ind*{^{bc[B}} \xi \ind*{^{C]}} \, , \\
 {}^\mfW _\xi \Pi_0^0 (\Gamma) & := \gamma \ind{^a_C^A} \Gamma \ind{_{abc}} \xi \ind*{^{bcC}} \xi \ind*{^{B}} - \xi \ind*{^a^A} \Gamma \ind{_{abc}} \xi^{bcB} \, , \\
 {}^\mfW _\xi \Pi_0^1 (\Gamma) & := \Gamma \ind{_{abc}} \xi^{bc[B} \xi^{C]} + \frac{1}{2m} \left( \gamma \ind{^d_A^{[B|}} \Gamma \ind{_{dbc}} \xi \ind*{^{bcA}} \xi \ind*{_a^{|C]}} - \xi \ind*{^d^{[B|}} \Gamma \ind{_{dbc}} \xi \ind*{^{bcA}} \gamma \ind{_a_A^{|C]}} \right) \, , 
\end{align*}
where $\Gamma \ind{_{abc}} \in \mfV \otimes \g$. For $m=2$, we have made use of the $\Spin(5,\C)$-invariant skewsymmetric bilinear forms $\gamma \ind{_{AB}}$ and $\gamma \ind{^{AB}}$.

Finally, the $\prb$-module $\gr(\mfW)$ can be expressed by means of the directed graph
 \begin{align*}
 \xymatrix{ & \mfW_{-1}^2 \ar[rdd] \ar[r] \ar[rdddd] & \mfW_{-2}^2 \ar[rd] & \\
 \mfW_0^1 \ar[ru] \ar[rd] \ar[rddd] & & & \mfW_{-3}^1 \\
 & \mfW_{-1}^1 \ar[rdd] & \mfW_{-2}^1 \ar[rd] \ar[ru] & \\
 \mfW_0^0 \ar[uuur] \ar[dr] & & & \mfW_{-3}^0 \\
 & \mfW_{-1}^0 \ar[r] & \mfW_{-2}^0 \ar[ru] \ar[ruuu] & }
\end{align*}
with the proviso that $\mfW_{-1}^1$, $\mfW_{-3}^0$ occur only when $m>2$. Here, an arrow from  $\mfW_i^j$ to $\mfW_{i-1}^k$ for some $i,j,k$ implies that $\breve{\mfW}_i^j \subset \g_1 \cdot \breve{\mfW}_{i-1}^k$ for any choice of irreducible $\g_0$-modules $\breve{\mfW}_i^j$ and $\breve{\mfW}_{i-1}^k$ isomorphic to $\mfW_i^j$ and $\mfW_{i-1}^k$ respectively.
\end{prop}

\begin{proof}
The idea of the proof is to choose a splitting \eqref{eq-null-grading} for $\mfV$, and thus for the filtration on $\mfW$. We can then decompose an element $\Gamma \ind{_{abc}} \in \mfV \otimes \wedge^2 \mfV$, in the obvious notation,
\begin{align}\label{eq-connection1form}
\begin{aligned}
 \Gamma \ind{_{abc}} \xi \ind*{^{bB}} \xi \ind*{^{cC}} & = \xi \ind*{_a^A} \Gamma \ind{_A^{BC}} - u \ind{_a} \Gamma \ind{^{BC}} + \eta \ind{_a_A} \Gamma \ind{^{ABC}} + 2 \, \xi \ind*{_a^A} \Gamma \ind{_{A:}^{[B}} \xi \ind*{^{C]}} - 2 \, u \ind{_a} \Gamma \ind{^{[B}} \xi \ind*{^{C]}} + 2 \, \eta \ind{_a_A} \Gamma \ind{^{A:[B}} \xi \ind*{^{C]}} \, , \\
 \Gamma \ind{_{abc}} \xi \ind*{^{bc}^D} 
&  = \left( \xi \ind*{_a^A} \Gamma \ind{_A^{EC}} - u \ind{_a} \Gamma \ind{^{EC}} 
 + \eta \ind{_a_A} \Gamma \ind{^{AEC}} \right) \eta \ind{_c_C} \gamma \ind{^c_E^D} 
 - 2 \, \left( \xi \ind*{_a^A} \Gamma \ind{_{A:}^D} - u \ind{_a} \Gamma \ind{^D} + \eta \ind{_a_A} \Gamma \ind{^{A:D}} \right) \\
 & \qquad \qquad \qquad \qquad \qquad \qquad \qquad \qquad \qquad \qquad \qquad
 + 2 \, \left( \xi \ind*{_a^A} \Gamma \ind{_{AB}^B} - u \ind{_a} \Gamma \ind{_{B}^B} + \eta \ind{_a_A} \Gamma \ind{^A_{B}^B} \right) \xi^D \, , \\
 \gamma \ind{^a_D^A}  \Gamma \ind{_{abc}} \xi \ind*{^{bc}^D}  
& = 4 \, \Gamma \ind{_{C:}^C} \xi^A + 4 \, \Gamma \ind{_C^{CA}} - 2 \, \Gamma \ind{^A} 
+ \eta \ind{_a_B} \gamma \ind{^a_C^A} \left( \Gamma \ind{^{BC}} - \Gamma \ind{^{B:C}} + \Gamma \ind{^{C:B}} \right) + \eta \ind{_b_B} \eta \ind{_c_C} \Gamma \ind{^{DBC}} \gamma \ind{^{bc}_D^A} \\
&  \qquad \qquad \qquad \qquad \qquad \qquad \qquad \qquad \qquad \qquad \qquad \qquad \qquad \qquad \qquad 
- 2 \, \Gamma \ind{_{B}^B} \xi^A + 2 \, \Gamma \ind{^A_{B}^B} \, .
\end{aligned}
\end{align}
Here, $\Gamma \ind{^{ABC}} := \Gamma \ind{_{abc}} \eta \ind*{^{aA}} \xi \ind*{^{bB}} \xi \ind*{^{cC}}$, $\Gamma \ind{^{BC}} := \Gamma \ind{_{abc}} u^a \xi \ind*{^{bB}} \xi \ind*{^{cC}}$ and $\Gamma \ind{_A^{BC}} := \Gamma \ind{_{abc}} \eta^a_A \xi \ind*{^{bB}} \xi \ind*{^{cC}}$ are skew-symmetric in their last two indices, and the colon $:$ in $\Gamma \ind{_{A:}^C} := \Gamma \ind{_{abc}} \eta \ind*{^a_A} u^b \xi \ind*{^{cC}} $ and $\Gamma \ind{^{A:C}} :=\Gamma \ind{_{abc}} \xi \ind{^{aA}} u^b \xi \ind*{^{cC}}$ separates the $1$-form index from the Lie algebra indices. Then, elements of the $\g_0$-modules $\breve{\mfW}_i^j$ linearly isomorphic  to $\mfW_i^j$  are given by
\begin{align*}
 \Gamma \ind{^{[ABC]}} & \in  \breve{\mfW}_{-3}^0 \, , &  \Gamma \ind{^{(AB)C}} & \in \breve{\mfW}_{-3}^1 \, , \\
 \Gamma \ind{^{AB}} & \in  \breve{\mfW}_{-2}^0 \, , &  \Gamma \ind{^{[A:B]}} & \in \breve{\mfW}_{-2}^1 \, , &  \Gamma \ind{^{(A:B)}} & \in \breve{\mfW}_{-2}^2 \, , \\
 \Gamma \ind{_B^{BA}} & \in \breve{\mfW}_{-1}^0 \, , &
 \Gamma \ind{_A^{BC}} - \frac{2}{m-1} I \ind*{_A^{[B|}} \Gamma \ind{_D^{D|C]}} & \in \breve{\mfW}_{-1}^1 \, ,  & \Gamma \ind{^A} & \in \breve{\mfW}_{-1}^2 \, ,  \\
 \Gamma \ind{_{A:}^A} & \in \breve{\mfW}_0^0 \, , & \Gamma \ind{_{A:}^B} - \frac{1}{m} I \ind*{_A^B} \Gamma \ind{_{C:}^C} & \in \breve{\mfW}_0^1 \, .
\end{align*}
Details are analogous to the even-dimensional case, and are left to reader.
\end{proof}

\section{Decomposition of the curvature}\label{sec-curvature}
Assume $m>1$, and consider the following $\g$-modules
\begin{displaymath}
{\renewcommand{\arraystretch}{1.5}
\begin{array}{||c|c|c||}
\hline
\text{$\g$-mod} & \text{Dimension} & \text{Description} \\
\hline
\mfF  & 
 m(2m+3) &
 \{ \Phi_{ab} \in \otimes^2 \mfV^* : \Phi_{ab} = \Phi_{(ab)} \, , \Phi \ind{_c^c} = 0 \}
 \\[.5em]
\hline
\mfA & \frac{1}{3}(2m-1)(2m+1)(2m+3) & 
 \{ A_{abc} \in \otimes^3 \mfV : A_{abc} = A_{a[bc]} \, , A_{[abc]} = 0 \, , A \ind{^a_{ac}} = 0 \}
\\[.5em]
\hline
\mfC & \frac{1}{3}(m-1)(m+1)(2m+1)(2m+3) &
\{ C_{abcd} \in \otimes^4 \mfV : C_{abcd} = C_{[ab][cd]} \, , C_{[abc]d} = 0 \, , C \ind{^a_{bad}} = 0 \} \\[.5em]
\hline
\end{array}}
\end{displaymath}
The tracefree Ricci tensor, Cotton-York tensor and the Weyl tensor of a Levi-Civita connection at a point belong to $\mfF$, $\mfA$ and $\mfC$ respectively. We now give $\prb$-invariant decompositions of these modules, where $\prb$ stabilises a projective pure spinor $[ \xi^A ]$ as described in section \ref{sec-algebra}.

\subsection{Decomposition of the space of tracefree Ricci tensors}

\begin{prop}\label{prop-TFRicci-classification}
The space $\mfF$ of tracefree symmetric $2$-tensors admits a filtration
\begin{align*}
\{ 0 \} =: \mfF^3 \subset \mfF^2 \subset \mfF^1 \subset \mfF^0 \subset \mfF^{-1} \subset \mfF^{-2} := \mfF \, ,
\end{align*}
of $\prb$-modules
\begin{align*}
 \mfF^i & := \{ \Phi \ind{_{ab}} \in \mfF : {}^\mfF _\xi \Pi_{i-1}^k (\Phi) = 0 \, , \, \, \mbox{for all $k$} \} \, , & i=-1,0,1,2,
\end{align*}
where the maps ${}^\mfF _\xi \Pi_i^j$ are defined in appendix \ref{sec-inv-maps}.
 
The associated graded $\prb$-module $\gr(\mfF)=\bigoplus_{i=-2}^2 \gr_i(\mfF)$, where $\gr_i(\mfF) := \mfF^i/\mfF^{i+1}$, splits into a direct sum
\begin{align*}
 \gr_{\pm2} (\mfF) & = \mfF_{\pm2}^0 \, , & \gr_{\pm1} (\mfF) & = \mfF_{\pm1}^0 \, , &
 \gr_0 (\mfF) & = \mfF_0^0 \oplus \mfF_0^1 \, , 
\end{align*}
of irreducible $\prb$-modules $\mfF_i^j$ as described below:
\vspace{-6.5mm}
\begin{center}
\begin{minipage}[b]{0.45\linewidth}\centering
\begin{displaymath}
{\renewcommand{\arraystretch}{1.5}
\begin{array}{||c|c|c||}
\hline
\text{$\prb$-mod} & \text{$\g_0$-mod} & \text{Dimension} \\
\hline
\mfF_{\pm2}^0 & \mfV_{\pm1} \circledcirc \mfV_{\pm1} & \frac{1}{2}m(m+1)  \\
\hline
\mfF_{\pm1}^0 & \mfV_0 \circledcirc \mfV_{\pm1}  & m \\
\hline
\end{array}}
\end{displaymath}
\end{minipage}
\begin{minipage}[b]{0.45\linewidth}\centering
\begin{displaymath}
{\renewcommand{\arraystretch}{1.5}
\begin{array}{||c|c|c||}
\hline
\text{$\prb$-mod} & \text{$\g_0$-mod} & \text{Dimension} \\
\hline
\mfF_0^0 & \mfV_0 \circledcirc \mfV_0 & 1 \\
\mfF_0^1 & \mfV_{\pm1} \circledcirc \mfV_{\mp1} & m^2-1 \\
\hline
\end{array}}
\end{displaymath}
\end{minipage}
\end{center}
Further,
\begin{align*}
 \mfF_0^j & := \{ \Phi \ind{_{ab}} \in \mfF^0 : {}^\mfF _\xi \Pi_0^k(\Phi) = 0 \, , \, \, \mbox{for $k \neq j$} \} / \mfF^1 \, .
\end{align*}

Finally, the $\prb$-module $\gr(\mfF)$ can be expressed by means of the directed graph
\begin{equation*}
 \xymatrix@R=1em{ & & \mfF_0^1 \ar[dr] & & \\
\mfF_2^0 \ar[r] & \mfF_1^0 \ar[ur] \ar[dr] &  & \mfF_{-1}^0 \ar[r] & \mfF_{-2}^0 \\
& & \mfF_0^0 \ar[ur] & & }
\end{equation*}
where an arrow from $\mfF_i^j$ to $\mfF_{i-1}^k$ for some $i,j,k$ implies that $\breve{\mfF}_i^j \subset \g_1 \cdot \breve{\mfF}_{i-1}^k$ for any choice of irreducible $\g_0$-modules $\breve{\mfF}_i^j$ and $\breve{\mfF}_{i-1}^k$ isomorphic to $\mfF_i^j$ and $\mfF_{i-1}^k$ respectively, or equivalently that $\ker  {}^\mfF _\xi \Pi_i^j \subset \ker  {}^\mfF _\xi \Pi_{i-1}^k$.
\end{prop}

\subsection{Decomposition of the space of Cotton-York tensors}

\begin{prop}\label{prop-Cotton-York-classification}
The space $\mfA$ of tensors with Cotton-York symmetries admits a filtration
\begin{align*}
\{ 0 \} =: \mfA^4 \subset \mfA^3 \subset \mfA^2 \subset \mfA^1 \subset \mfA^0 \subset \mfA^{-1} \subset \mfA^{-2} \subset \mfA^{-3} := \mfA \, ,
\end{align*}
of $\prb$-modules
\begin{align*}
 \mfA^i & = \{ A_{abc} \in \mfA : {}^\mfA _\xi \Pi_{i-1}^k(A) = 0 \, , \, \, \mbox{for all $k$} \} \, ,  & i & = -2,-1,0,1,2,3,
\end{align*}
where the maps ${}^\mfC _\xi \Pi_i^j$ are defined in appendix \ref{sec-inv-maps}.

The associated graded $\prb$-module $\gr(\mfA)=\bigoplus_{i=-3}^3 \gr_i(\mfA)$, where $\gr_i(\mfA) := \mfA^i/\mfA^{i+1}$, splits into a direct sum
\begin{align*}
 \gr_{\pm3} (\mfA) & = \mfA_{\pm3}^0 \, , & 
 \gr_{\pm2} (\mfA) & = \mfA_{\pm2}^0 \oplus \mfA_{\pm2}^1 \, , \\
 \gr_{\pm1} (\mfA) & = \mfA_{\pm1}^0 \oplus \mfA_{\pm1}^1 \oplus \mfA_{\pm1}^2 \oplus \mfA_{\pm1}^3 \, , &
 \gr_0 (\mfA) & = \mfA_0^0 \oplus \mfA_0^1 \oplus \mfA_0^2 \, ,
\end{align*}
of irreducible $\prb$-modules $\mfA_i^j$ as described below:
\vspace{-6.5mm}
\begin{center}
\begin{minipage}[b]{0.45\linewidth}\centering
\begin{displaymath}
{\renewcommand{\arraystretch}{1.5}
\begin{array}{||c|c|c||}
\hline
\text{$\prb$-mod} & \text{$\g_0$-mod} & \text{Dimension} \\
\hline
\mfA_{\pm3}^0 & \mfV_{\pm1} \circledcirc \g_{\pm2} & \frac{1}{3}m(m+1)(m-1) \\
\hline
\mfA_{\pm2}^0 & \mfV_0 \circledcirc \g_{\pm2} & \frac{1}{2}m(m-1) \\
\mfA_{\pm2}^1 & \mfV_{\pm1} \circledcirc \g_{\pm1} & \frac{1}{2}m(m+1) \\
\hline
\mfA_{\pm1}^0 & \mfV_{\pm1} \circledcirc \mfz_0 & m \\
\mfA_{\pm1}^1 & \mfV_0 \circledcirc \g_{\pm1} & m \\
\mfA_{\pm1}^2 & \mfV_{\mp1} \circledcirc \g_{\pm2} & \frac{1}{2}m(m-2)(m+1) \\
\mfA_{\pm1}^3 & \mfV_{\pm1} \circledcirc \slie_0 & \frac{1}{2}m(m+2)(m-1) \\
\hline
\end{array}}
\end{displaymath}
\end{minipage}
\begin{minipage}[b]{0.45\linewidth}\centering
\begin{displaymath}
{\renewcommand{\arraystretch}{1.5}
\begin{array}{||c|c|c||}
\hline
\text{$\prb$-mod} & \text{$\g_0$-mod} & \text{Dimension} \\
\hline
\mfA_0^0 & \mfV_0 \circledcirc \mfz_0 & 1 \\
\mfA_0^1 & \mfV_0 \circledcirc \slie_0 & (m-1)(m+1) \\
\mfA_0^2 & \mfV_1 \circledcirc \g_{-1} & (m-1)(m+1) \\
\hline
\end{array}}
\end{displaymath}
\end{minipage}
\end{center}
with the proviso that when $m=2$,  $\mfA_{\pm1}^2$ does not occur.  Further,
\begin{align*}
 \mfA_i^j & = \{ A_{abc} \in \mfA^i : {}^\mfA _\xi \Pi_i^k(A) = 0 \, , \, \, \mbox{for all $k \neq j$} \} / \mfA^{i+1} \, ,  & \mbox{for $|i|\leq 2$.}
\end{align*}

Finally, the $\prb$-module $\gr(\mfA)$ can be expressed by means of the directed graph
\begin{equation*}
 \xymatrix@R=1em{
	    & & \mfA_1^3 \ar[dr] &  & \mfA_{-1}^3 \ar[dr] &    &  \\
&  \mfA_2^1 \ar[ur] \ar[dddr]   &  & \mfA_0^1 \oplus \mfA_0^2 \ar[ur] \ar[dddr] \ar[dr] & &  \mfA_{-2}^1 \ar[dr]        	& \\
\mfA_3^0 \ar[ur] \ar[dr]   &    &  \mfA_1^2 \ar[ur] & &  \mfA_{-1}^2 \ar[dr] &  	& \mfA_{-3}^0  \\
&  \mfA_2^0 \ar[dr] \ar[ur]  &  & \mfA_0^0 \ar[dr]  & & \mfA_{-2}^0 \ar[ur] 	& \\
	    & & \mfA_1^0 \oplus \mfA_1^1 \ar[ur] \ar[uuur] &  & \mfA_{-1}^0 \oplus \mfA_{-1}^1 \ar[ur] \ar[uuur] &        &  }
\end{equation*}
where an arrow from  $\mfA_i^j$ to $\mfA_{i-1}^k$ for some $i,j,k$ implies that $\breve{\mfA}_i^j \subset \g_1 \cdot \breve{\mfA}_{i-1}^k$ for any choice of irreducible $\g_0$-modules $\breve{\mfA}_i^j$ and $\breve{\mfA}_{i-1}^k$ isomorphic to $\mfA_i^j$ and $\mfA_{i-1}^k$ respectively.
\end{prop}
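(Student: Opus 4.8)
The plan is to repeat, for the module $\mfA$, the procedure already carried out for $\mfV$, for $\g \cong \wedge^2 \mfV$, and for the tracefree Ricci module $\mfF$ of Proposition \ref{prop-TFRicci-classification}, which in turn follows the even-dimensional template of \cite{Taghavi-Chabert2012a} (see also \cite{vCap2009}); I indicate the steps and where the real work lies.

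\emph{Step 1: the filtration.} Since $\mfA$ is a $\g$-submodule of $\otimes^3 \mfV$, the $\prb$-invariant filtration $\{ \mfR^j \}$ of $\mfR = \otimes^3 \mfV$ induced by \eqref{eq-filtration} restricts to a $\prb$-invariant filtration of $\mfA$. Putting $\mfA^i := \mfA \cap \mfR^i$ and using that the grading element $E \ind{_a^b}$ has eigenvalue $i$ on $\mfV_i$, so that the filtration degrees on $\otimes^3 \mfV$ run from $-3$ to $3$, one obtains the chain $\{ 0 \} = \mfA^4 \subset \mfA^3 \subset \cdots \subset \mfA^{-3} = \mfA$. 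Identifying each $\mfA^i$ with the common kernel of the maps ${}^\mfA _\xi \Pi_{i-1}^k$ is then a matter of unwinding the definitions in appendix \ref{sec-spinor-descript}: each ${}^\mfA _\xi \Pi_i^k$ is a contraction of $A_{abc}$ against copies of $\xi \ind{_a^A}$, with metric traces and $\gamma$-matrices inserted to produce the trace-adjusted components, so its kernel is precisely the corresponding step of the filtration.

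\emph{Step 2: the associated graded as a $\glie(m,\C)$-module.} Passing to $\gr(\mfA) = \bigoplus_{i=-3}^3 \mfA^i/\mfA^{i+1}$, reductivity of $\g_0 \cong \glie(m,\C)$ gives a splitting of each $\gr_i(\mfA)$ into $\g_0$-irreducibles, and $\gr(\mfA)$ is linearly isomorphic to $\mfA$ carrying its $\glie(m,\C)$-action. Using the splitting $\mfV = \mfV_{-1} \oplus \mfV_0 \oplus \mfV_1$ with $\mfV_1 \cong \C^m$ the standard $\glie(m,\C)$-module, $\mfV_{-1}$ its dual and $\mfV_0$ trivial, I would decompose $\otimes^3 \mfV$, impose the Cotton-York symmetry by Young symmetrisation and trace removal, and sort the surviving $\glie(m,\C)$-irreducibles by their grading-element eigenvalue; this reproduces the list of $\mfA_i^j$ recorded in Table \ref{table-A} (equivalently, this is the classical branching of the irreducible $\so(2m+1,\C)$-module $\mfA$ to its Levi subgroup $\GL(m,\C)$). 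The characterisation $\mfA_i^j = \{ A \in \mfA^i : {}^\mfA _\xi \Pi_i^k (A) = 0, \ k \neq j \}/\mfA^{i+1}$ for $|i| \le 2$ then holds because the ${}^\mfA _\xi \Pi_i^k$ are the $\g_0$-equivariant projections onto these summands and, being saturated with symmetries, are nonzero on them; the $m = 2$ proviso on $\mfA_{\pm1}^2$ records the vanishing of $\glie(2,\C)$-modules attached to three-row Young diagrams.

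\emph{Step 3: the directed graph, and the main obstacle.} An arrow $\mfA_i^j \to \mfA_{i-1}^k$ is, by definition, the statement that the $\g_0$-equivariant map $\g_1 \otimes \breve{\mfA}_{i-1}^k \to \breve{\mfA}_i^j$ coming from the $\gr(\g)$-module structure on $\gr(\mfA)$ is nonzero, where $\g_1 \cong \mfV_0 \otimes \mfV_1 \cong \C^m$. I would first apply the Pieri rule: a necessary condition for the arrow is that $\breve{\mfA}_i^j$ occur in $\C^m \otimes \breve{\mfA}_{i-1}^k$, which discards most candidates. Then, since $\mfA$ is irreducible as a $\g$-module, $\gr(\mfA)$ is generated over the positive nilradical $\g_+ = \g_1 \oplus \g_2$ from its bottom component $\mfA_{-3}^0$ and co-generated from $\mfA_3^0$ by $\g_-$; this forces enough arrows that every summand is reached, and matching this requirement against the Pieri constraints pins the graph down. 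I expect the genuinely delicate point to be in grading degree $0$, where $\g_1$ must reach the sum $\mfA_0^1 \oplus \mfA_0^2$ and one must decide which of $\mfA_0^1$, $\mfA_0^2$ receives an arrow from $\mfA_1^0 \oplus \mfA_1^1$ and which from $\mfA_1^2$ (and symmetrically on the other half of the diagram); this I would settle by evaluating the projectors ${}^\mfA _\xi \Pi_0^j$ on $\g_1 \cdot \breve{\mfA}_{-1}^k$ explicitly, with the $m = 2$ degeneration borne in mind. Everything before this last verification is routine once the analogous classifications of $\mfV$, $\g$ and $\mfF$ are in hand.
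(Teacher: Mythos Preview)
Your proposal is correct and follows essentially the same approach the paper takes: the paper does not give an explicit proof of this proposition but instead states at the start of section \ref{sec-curvature} that the $\prb$-invariant decomposition is obtained by the same algorithm as in the even-dimensional case of \cite{Taghavi-Chabert2012a} (see also \cite{vCap2009}), with the explicit projection maps relegated to appendix \ref{sec-spinor-descript}. Your three-step outline --- inducing the filtration from $\otimes^3\mfV$, branching to $\glie(m,\C)$ via the splitting \eqref{eq-null-grading}, and determining the arrows by combining the Pieri rule with the irreducibility of $\mfA$ and, where needed, direct evaluation of the projectors --- is exactly that algorithm made explicit, including the correct identification of the delicate point at grading $0$ where the isotopic pair $\{\mfA_0^1,\mfA_0^2\}$ appears.
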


\begin{rem}\label{rem-isotopicA}
The presence of the isotopic pairs of $\prb$-modules $\{\mfA_{\pm1}^0,\mfA_{\pm1}^1\}$ and $\{\mfA_0^1,\mfA_0^2\}$ in the decomposition of $\gr(\mfA)$ allows us to define further $\prb$-submodules whereby there are algebraic relations among them. For instance, one distinguish $\{ A_{abc} \in \mfA_1^0 \oplus \mfA_1^1 : {}^\mfA _\xi \Pi_2^0 (A) = 0 \}$ and $\{ A_{abc} \in \mfA_1^0 \oplus \mfA_1^1 : {}^\mfA _\xi \Pi_2^1 (A) = 0 \}$. In particular, it is certainly not true that $\ker {}^\mfA _\xi \Pi_2^1 \subset \ker {}^\mfA _\xi \Pi_1^0$ or $\ker {}^\mfA _\xi \Pi_2^1 \subset \ker {}^\mfA _\xi \Pi_1^1$, and so on. It thus makes it difficult to characterise the arrows of the diagram in terms of inclusions of kernels of $\ker {}^\mfA _\xi \Pi_j^i$ as we did in \cite{Taghavi-Chabert2016}.

\end{rem}

\subsection{Decomposition of the space of Weyl tensors}

\begin{prop}\label{prop-Weyl-classification}
The space $\mfC$ of tensors with Weyl symmetries admits a filtration
\begin{align*}
 \{ 0 \} =: \mfC^5 \subset \mfC^4 \subset \mfC^3 \subset \mfC^2 \subset \mfC^1 \subset \mfC^0 \subset \mfC^{-1} \subset \mfC^{-2} \subset \mfC^{-3} \subset \mfC^{-4} := \mfC \, ,
\end{align*}
of $\prb$-modules
\begin{align*}
 \mfC^i & = \{ C_{abcd} \in \mfC : {}^\mfC _\xi \Pi_i^k(C) = 0 \, , \, \, \mbox{for all $k$} \} \, , & i = -3,-2,-1,0,1,2,3,4,
\end{align*}
where the maps ${}^\mfC _\xi \Pi_i^j$ are defined in appendix \ref{sec-inv-maps}.

The associated graded $\prb$-module $\gr(\mfC)=\bigoplus_{i=-4}^4 \gr_i(\mfC)$, where $\gr_i(\mfC) := \mfC^i/\mfC^{i+1}$, splits into a direct sum
\begin{align*}
 \gr_{\pm4} (\mfC) & = \mfC_{\pm4}^0 \, , &
 \gr_{\pm3} (\mfC) & = \mfC_{\pm3}^0 \, , & 
 \gr_{\pm2} (\mfC) & = \mfC_{\pm2}^0 \oplus \mfC_{\pm2}^1 \oplus \mfC_{\pm2}^2 \, , \\
 \gr_{\pm1} (\mfC) & = \mfC_{\pm1}^0 \oplus \mfC_{\pm1}^1 \oplus \mfC_{\pm1}^2 \, , &
 \gr_0 (\mfC) & = \mfC_0^0 \oplus \mfC_0^1 \oplus \mfC_0^2 \oplus \mfC_0^3 \oplus \mfC_0^4  \, ,
\end{align*}
of irreducible $\prb$-modules $\mfC_i^j$ as described below:
\vspace{-6.5mm}
\begin{center}
\begin{minipage}[b]{0.45\linewidth}\centering
\begin{displaymath}
{\renewcommand{\arraystretch}{1.5}
\begin{array}{||c|c|c||}
\hline
\text{$\prb$-mod} & \text{$\g_0$-mod} & \text{Dimension} \\
\hline
\mfC_{\pm4}^0 & \g_{\pm2} \circledcirc \g_{\pm2} & \frac{1}{12}m^2 (m^2-1) \\
\hline
\mfC_{\pm3}^0 & \g_{\pm1} \circledcirc \g_{\pm2} & \frac{1}{3}m (m^2-1) \\
\hline
\mfC_{\pm2}^0 & \mfz_0 \circledcirc \g_{\pm2} & \frac{1}{2}m (m-1) \\
\mfC_{\pm2}^1 & \g_{\pm1} \circledcirc \g_{\pm1} & \frac{1}{2} m (m +1) \\
\mfC_{\pm2}^2 & \slie_0 \circledcirc \g_{\pm2} & \frac{1}{3} m^2 (m^2 - 4) \\
\hline
\mfC_{\pm1}^0 & \mfz_0 \circledcirc \g_{\pm1} & m \\
\mfC_{\pm1}^1 & \g_{\mp1} \circledcirc \g_{\pm2} & \frac{1}{2} m (m-2) (m + 1)\\
\mfC_{\pm1}^2 & \slie_0 \circledcirc \g_{\pm1} & \frac{1}{2} m (m+2) (m - 1) \\
\hline
\end{array}}
\end{displaymath}
\end{minipage}
\begin{minipage}[b]{0.45\linewidth}\centering
\begin{displaymath}
{\renewcommand{\arraystretch}{1.5}
\begin{array}{||c|c|c||}
\hline
\text{$\prb$-mod} & \text{$\g_0$-mod} & \text{Dimension} \\
\hline
\mfC_0^0 & \mfz_0 \circledcirc \mfz_0 & 1 \\
\mfC_0^1 & \slie_0 \circledcirc \mfz_0 & m^2 - 1 \\
\mfC_0^2 & \g_1 \circledcirc \g_{-1} & m^2 - 1 \\
\mfC_0^3 & \g_2 \circledcirc \g_{-2} & \frac{1}{4}m^2 (m+1) (m-3) \\
\mfC_0^4 & \slie_0 \circledcirc \slie_0 & \frac{1}{4}m^2 (m-1) (m+3) \\
\hline
\end{array}}
\end{displaymath}
\end{minipage}
\end{center}
with the proviso that when $m=2$, the modules $\mfC_{\pm2}^2$, $\mfC_{\pm1}^1$, $\mfC_0^1$ and $\mfC_0^3$ do not occur, and when $m=3$, the module $\mfC_0^3$ does not occur. Further,
\begin{align*}
 \mfC_i^j & = \{ C_{abcd} \in \mfC^i : {}^\mfC _\xi \Pi_i^k(C) = 0 \, , \, \, \mbox{for all $k \neq j$} \} / \mfC^{i+1} \, , & \mbox{for $|i|\leq 3$.}
\end{align*}

Finally, the $\prb$-module $\gr(\mfC)$ can be expressed by means of the directed graph
\begin{equation*}
\xymatrix@R=1em{ & & & & & & & & \\
& & & & \mfC_0^4 \ar[dr]   & & & & \\
& & \mfC_2^2 \ar[ddr] \ar[r] & \mfC_1^2 \ar[dddr]  \ar[ur] & & \mfC_{-1}^2 \ar[r] \ar[ddr] & \mfC_{-2}^2 \ar[ddr] & & \\
 & &  &  & \mfC_0^3 \ar[dr] &  &  & & \\
\mfC_4^0 \ar[r] & \mfC_3^0 \ar[ddr] \ar[uur] \ar[r] & \mfC_2^1 \ar[uur] \ar[ddr] & \mfC_1^1 \ar[dr] \ar[ur] & & \mfC_{-1}^1 \ar[uur] \ar[ddr] & \mfC_{-2}^1 \ar[r] & \mfC_{-3}^0 \ar[r] & \mfC_{-4}^0 \\
 & &  &  & \mfC_0^1 \oplus \mfC_0^2 \ar[uuur] \ar[dr] \ar[ur]  &  &  & & \\
& & \mfC_2^0 \ar[r] \ar[uur] & \mfC_1^0 \ar[dr] \ar[ur] & & \mfC_{-1}^0 \ar[r] \ar[uur] & \mfC_{-2}^0 \ar[uur] & & \\
& & & & \mfC_0^0 \ar[ur]  & & & & \\
& & & & & & & &}
\end{equation*}
where an arrow from  $\mfC_i^j$ to $\mfC_{i-1}^k$ for some $i,j,k$ implies that $\breve{\mfC}_i^j \subset \g_1 \cdot \breve{\mfC}_{i-1}^k$ for any choice of irreducible $\g_0$-modules $\breve{\mfC}_i^j$ and $\breve{\mfC}_{i-1}^k$ isomorphic to $\mfC_i^j$ and $\mfC_{i-1}^k$ respectively.
\end{prop}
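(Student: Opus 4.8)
The plan is to run the standard algorithm for decomposing a finite-dimensional $\g$-module under a parabolic subalgebra, exactly as in the even-dimensional treatment of \cite{Taghavi-Chabert2012a} (and \cite{vCap2009}); the only structural difference is that here $\prb$ is $|2|$-graded, owing to the extra null direction $\mfV_0$, so the filtration of $\mfC$ has nine steps rather than five. First I would exploit the grading element: since $\mfC$ is a $\g$-submodule of $\otimes^4 \mfV$, it is in particular invariant under $E \ind{_{ab}}$ acting as a derivation, and its $E$-eigenspace decomposition gives $\mfC = \bigoplus_{i=-4}^4 \mfC_i$, where $\mfC_i$ consists of the components of $C_{abcd}$ whose indices, sorted among $\mfV_{-1}, \mfV_0, \mfV_1$, have total degree $i$. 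Setting $\mfC^i := \bigoplus_{j \geq i} \mfC_j$ and using $[\g_k,\g_l] \subset \g_{k+l}$ together with the fact that $\g_1 \oplus \g_2$ raises degree, each $\mfC^i$ is immediately seen to be a $\prb$-submodule, and $\gr_i(\mfC) = \mfC^i/\mfC^{i+1} \cong \mfC_i$ as $\g_0$-modules. This yields the filtration and the coarse splitting of $\gr(\mfC)$.

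The next step is pure $\GL(m,\C)$-representation theory: decompose each $\mfC_i$ into irreducibles. Under $\mfV_1 \cong \C^m$, $\mfV_{-1} \cong (\C^m)^*$ and $\mfV_0 \cong \C$ (up to grading twist), $\mfC_i$ is the subspace of the relevant tensor power cut out by the residual pair-symmetry, first Bianchi identity and trace conditions, and the Littlewood--Richardson rule / plethysm read off the constituents $\mfC_i^j$ with their Young-diagram labels (Table \ref{table-C}). The exceptions at $m=2,3$ are exactly the vanishing of those $\GL(m,\C)$-irreducibles whose Young diagrams have more than $m$ rows, and a direct inspection in those two cases disposes of the provisos; the identity $\sum_{i,j} \dim \mfC_i^j = \dim \mfC$ serves as a consistency check.

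It then remains to match this abstract picture with the explicit maps ${}^\mfC _\xi \Pi_i^k$ of appendix \ref{sec-spinor-descript}. Each such map is a contraction of $C_{abcd}$ against copies of $\xi \ind{_a^A}$ and skew $\gamma$-matrices; using the purity conditions \eqref{eq-purity_cond}, \eqref{eq-purity_cond_Cartan} and the contraction identities of Lemma \ref{lem-technical} one checks that it is $\prb$-equivariant and ``saturated with symmetries'' (so its image is a single irreducible $\prb$-module), whence its kernel is the claimed submodule; intersecting kernels over $k$ recovers the $\mfC^i$, and the quotient description of the $\mfC_i^j$ for $|i| \leq 3$ follows. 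The maps are, as the text warns, somewhat baroque, and the real care here is to ensure no trace or Bianchi term is double counted, but the computation is mechanical once Lemma \ref{lem-technical} is in hand.

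Finally, the Penrose diagram records the action of the nilradical: an arrow $\mfC_i^j \to \mfC_{i-1}^k$ asserts that a copy of $\breve{\mfC}_i^j$ occurs inside $\g_1 \cdot \breve{\mfC}_{i-1}^k \subset \mfC_i$. Since $\g_1 \cong \mfV_0 \otimes \mfV_1 \cong \C^m$ as a $\g_0$-module, decomposing $\g_1 \otimes \breve{\mfC}_{i-1}^k$ amounts to adding one box to a Young diagram, which restricts the possible targets; one then has to verify non-vanishing of the action map on each candidate component, either by a short $\gamma$-matrix computation with the explicit projectors or by appealing to the irreducibility of $\mfC$ as a $\g$-module. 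I expect this last step -- correctly pinning down which of the $\g_0$-admissible arrows are actually present -- together with the plethysm bookkeeping for general $m$ and the $m = 2, 3$ exceptions, to be the main obstacle; everything else is a faithful transcription of the even-dimensional algorithm.
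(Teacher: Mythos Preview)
Your proposal is correct and is precisely the approach the paper takes: the paper does not give a self-contained proof of this proposition but instead refers, at the start of section~\ref{sec-curvature}, to the same algorithm from \cite{Taghavi-Chabert2012a} and \cite{vCap2009} that you have outlined (grading-element eigenspace decomposition, $\GL(m,\C)$ plethysm on each graded piece, explicit projection maps from appendix~\ref{sec-spinor-descript}, and the $\g_1$-action for the Penrose diagram). Your identification of the low-dimensional exceptions as Young-diagram vanishing and of the arrow verification as the only delicate step matches the paper's implicit treatment exactly.
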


\begin{rem}
Analogous to Remark \ref{rem-isotopicA}, one can define additional $\prb$-submodules from the isotopic pair of $\prb$-modules $\{\mfC_0^1,\mfC_0^2\}$. For instance, one has $\{ C_{abcd} \in \mfC_0^1 \oplus \mfC_0^2 : {}^\mfC _\xi \Pi_1^0 (C) = 0 \}$ and $\{ C_{abcd} \in \mfC_0^1 \oplus \mfC_0^2 : {}^\mfC _\xi \Pi_1^1 (C) = 0 \}$, and so on. Again, it is not true that $\ker {}^\mfC _\xi \Pi_1^1 \subset \ker {}^\mfC _\xi \Pi_0^1$ or $\ker {}^\mfC _\xi \Pi_1^1 \subset \ker {}^\mfC _\xi \Pi_0^2$. This is why we have not characterise the arrows of the diagram in terms of inclusions of kernels of $\ker {}^\mfC _\xi \Pi_j^i$ unlike in \cite{Taghavi-Chabert2016}. Proposition \ref{prop-recurrent-spinor} in section \ref{sec-geometry} will illustrate the issue.
\end{rem}

\section{Differential geometry of pure spinor fields}\label{sec-geometry}
As before, conventions are taken from \cite{Taghavi-Chabert2016} and references therein. Throughout, $(\mcM,g)$ will denote an $n$-dimensional oriented complex Riemannian manifold, where $n=2m+1$, with holomorphic tangent denoted by $\Tgt \mcM$ and so on. The holomorphic Levi-Civita connection will be denoted $\nabla_a$, the Riemann tensor $R \ind{_{abcd}}$, the Weyl tensor $C_{abcd}$, the Ricci tensor $R_{ab}$, with tracefree part $\Phi_{ab}$, and the Ricci scalar $R$, their relation being given by
\begin{align}\label{eq-Riem_decomposition}
R \ind{_{abcd}} & =
C \ind{_{abcd}} + \frac{4}{n-2} \Phi \ind{_{[a|[c}} g \ind{_{d]|b]}} + \frac{2}{n(n-1)} R \, g \ind{_{a[c}} g \ind{_{d]b}} \, .
\end{align}
In dimension $n=3$, the Weyl tensor vanishes identically, i.e.\ $R \ind{_{abcd}} = 4 \, \Phi \ind{_{[a|[c}} g \ind{_{d]|b]}} + \frac{1}{3} R \, g \ind{_{a[c}} g \ind{_{d]b}}$.

%

We assume $(\mcM,g)$ to be spin so that the structure group of the frame bundle $\mcF \mcM$ of $\mcM$ is $\Spin(2m+1,\C)$. The connection on the spinor bundle $\mcS$ will also be denoted $\nabla_a$, and preserves the Clifford module structure of $\mcS$, i.e.\ $\nabla_a \gamma \ind{_b_C^D} = 0$, and recall that $2 \, \nabla \ind{_{\lb{a}}} \nabla \ind{_{\rb{b}}} \xi^A = - \frac{1}{4} R \ind{_{abcd}} \gamma \ind{^{cd} _B^A} \xi^B$ for any holomorphic spinor field $\xi \ind*{^A}$, and similarly for dual spinor fields.

\begin{rem}[Notation]
As in the previous sections, we shall make use of the short-hand notation $\xi \ind*{_{a_1 \ldots a_k} ^A} := \xi \ind*{^B} \gamma \ind{_{a_1 \ldots a_k} _B ^A}$ for any holomorphic spinor field $\xi^A$ and any $k>0$.
\end{rem}

\begin{ass}
We work in the holomorphic category throughout, and $\Gamma( \cdot)$ denotes the space of holomorphic sections of a holomorphic fiber bundle. See section \ref{sec-ps-Riem} for extensions to real manifolds.

Henceforth, we assume $n>3$ for definiteness, relegating the case $n=3$ to appendix \ref{sec-3dim}. Nonetheless, many of the statements made in this section still apply by setting $C \ind{_{abcd}} = 0$.

Finally, we stress that the results presented herein are local in nature.
\end{ass}

\subsection{Projective pure spinor fields}
\begin{defn}
An \emph{almost null structure} $\mcN$ on $(\mcM,g)$ is a rank-$m$ distribution that is totally null, i.e.\ $g(v,w)=0$ for all sections $v$, $w$ of $\mcN$.
\end{defn}

An almost null structure $\mcN$ will also be referred to as a $\gamma$-plane distribution. The orthogonal complement $\mcN^\perp$ of $\mcN$ is a rank-$(m+1)$ subbundle of $\Tgt \mcM$ that contains $\mcN$. The bundle of all almost null structures on $(\mcM,g)$ will be denoted $\Gr_m (\Tgt \mcM, g)$. We can use the spin structure on $(\mcM,g)$ to identify an almost null structure as a projective pure spinor field, i.e.\ a spinor field defined up to scale and which is pure at every point.



Now, let $[\xi^A]$ be a holomorphic projective pure spinor field on $\mcM$, i.e.\ a (global) holomorphic section of $\Gr_m (\Tgt \mcM,g)$, with associated holomorphic almost null structure $\mcN_\xi$ and orthogonal complement $\mcN_\xi^\perp$. This geometric data is equivalent to a reduction of the structure group of $\mathrm{F} \mcM$ to the stabiliser $P$ of $[\xi^A]$. The representation theory of $P$, or of its Lie algebra $\prb$, which we have described in sections \ref{sec-algebra}, \ref{sec-curvature} and \ref{sec-intrinsic-torsion}, gives rise to holomorphic vector bundles in the standard way as already explicated in \cite{Taghavi-Chabert2016}. In particular, the pointwise algebraic degeneracy of the curvature tensors will be expressed in terms of the maps ${}^\mfF _\xi \Pi_i^j$, ${}^\mfA _\xi \Pi_i^j$ and ${}^\mfC _\xi \Pi_i^j$ given in Appendix \ref{sec-inv-maps}.

\subsubsection{Intrinsic torsion}
For simplicity, we choose a holomorphic connection $1$-form $\Gamma \ind{_{ab}^c}$ for $\nabla_a$ such that
\begin{align}\label{eq-nabla-spin-Gamma}
\nabla_a \xi^A & = -\frac{1}{4} \Gamma \ind{_{abc}} \gamma \ind{^{bc}_B^A} \xi^B \, .
\end{align}
We can identify the notion of \emph{intrinsic torsion} \cites{Chern1953,Bernard1960,Salamon1989} of the $P$-structure defined by $[\xi^A]$ with
\begin{align*}
\Gamma \ind{_{abc}} \xi \ind*{^b^B} \xi \ind*{^c^C} \qquad \in \qquad \mfV^{-1} \otimes \wedge^2 \mfS^{\frac{m-2}{2}} \, ,
\end{align*}
which, at a point, we identify as an element of the $\prb$-module $\mfW := \mfV \otimes \g/\prb$ defined in section \ref{sec-intrinsic-torsion}.
When the intrinsic torsion vanishes, the Levi-Civita connection preserves $[\xi^A]$,
i.e. \begin{align}\label{eq-recurrent}
\nabla \ind{_a} [ \xi \ind*{^A} ] & = 0 \, , & \mbox{i.e.} & &  \nabla \ind{_a} \xi \ind*{^A} & = \alpha \ind{_a} \xi \ind*{^A} \, ,
\end{align}
for some $1$-form $\alpha_a$. If the intrinsic torsion does not vanish, we can nevertheless investigate the differential and geometric properties of $[\xi^A]$, $\mcN_\xi$ and $\mcN_\xi^\perp$ in terms of the decomposition of $\mfW$ given in Proposition \ref{prop-intorsion-class-alg}.

Before we proceed, we compute, from \eqref{eq-nabla-spin-Gamma} and \eqref{eq-useful-id}, the formula
\begin{align*}
 \left( \nabla \ind{_a} \xi \ind*{^b^B} \right) \xi \ind*{_b^C} = - \left( \nabla \ind{_a} \xi \ind*{^B} \right) \xi \ind*{^C} + \Gamma \ind{_{abc}} \xi \ind*{^b^B} \xi \ind*{^c^C} \, 
\end{align*}
from which we deduce
\begin{align*}
\left( \nabla \ind{_a} \xi \ind*{^b^{(B}} \right) \xi \ind*{_b^{C)}} & = - \left( \nabla \ind{_a} \xi \ind*{^{(B}} \right) \xi \ind*{^{C)}} \, , &
 \left( \nabla \ind{_a} \xi \ind*{^b^{[B}} \right) \xi \ind*{_b^{C]}} & = - \left( \nabla \ind{_a} \xi \ind*{^{[B}} \right) \xi \ind*{^{C]}} + \Gamma \ind{_{abc}} \xi \ind*{^b^B} \xi \ind*{^c^C} \, , \\
  \left( \nabla \ind{_a} \xi \ind*{^b^{[B}} \right) \xi \ind*{_b^C} \xi \ind*{^{D]}} & = \Gamma \ind{_{abc}} \xi \ind*{^b^{[B}} \xi \ind*{^c^C} \xi \ind*{^{D]}} \, , &
  \left( \nabla \ind{_a} \xi \ind*{^b^{[A}} \right)  \xi \ind*{^{B]}} \xi \ind*{_b^{[C}} \xi \ind*{^{D]}} & = \Gamma \ind{_{abc}} \xi \ind*{^b^{[A}} \xi \ind*{^{B]}} \xi \ind*{^c^{[C}} \xi^{D]} \, .
\end{align*}
The first of these identities is trivially satisfied by virtue of the purity condition. These formulae together with Proposition \ref{prop-intorsion-class-alg} prove the following result.

\begin{prop}\label{prop-intorsion-class}
 Let $[ \xi^A ]$ be a holomorphic projective pure spinor field on $(\mcM,g)$, and let $\Gamma _{abc} \xi^{bB} \xi^{cC} \in \mfW$ be its associated intrinsic torsion. Then, pointwise,
 \begin{itemize}
  \item ${}^\mfW _\xi \Pi_{-3}^0 (\Gamma ) = 0$ if and only if ($m>2$ only)
  \begin{align}\label{eq-cond-30}
  \left( \xi \ind*{^{a[A}} \nabla \ind{_a} \xi \ind*{^b^{B}} \right) \xi \ind*{_b^C} \xi^{D]} & = 0 \, ;
  \end{align}
  \item ${}^\mfW _\xi \Pi_{-3}^1 (\Gamma ) = 0$ if and only if
  \begin{align}\label{eq-cond-31}
      \xi \ind*{^{[A}} \left( \xi \ind*{^{aB]}} \nabla \ind{_a} \xi \ind*{^b^{[C}} \right) \xi^{D]} \xi \ind*{_b^{[E}} \xi^{F]} + \xi \ind*{^{[C}} \left( \xi \ind*{^{aD]}} \nabla \ind{_a} \xi \ind*{^b^{[A}} \right) \xi^{B]} \xi \ind*{_b^{[E}} \xi^{F]} & = 0 \, ;
  \end{align}
  \item ${}^\mfW _\xi \Pi_{-2}^0 (\Gamma ) = 0$ if and only if
    \begin{align}\label{eq-cond-20}
      \left( \xi \ind*{^{aA}} \nabla \ind{_a} \xi \ind*{^b^{[B}} \right) \xi \ind*{_b^{C}} \xi^{D]} & = 0 \, ;
  \end{align}
  \item ${}^\mfW _\xi \Pi_{-2}^1 (\Gamma ) = 0$ if and only if
  \begin{align}\label{eq-cond-21}
   \left( \xi \ind*{^{a[A}} \nabla \ind{_a} \xi \ind*{^{B}} \right) \xi^{C]} & = 0 \, ;
  \end{align}
  \item ${}^\mfW _\xi \Pi_{-2}^2 (\Gamma ) = 0$ if and only if
    \begin{align}\label{eq-cond-22}
  \xi \ind*{^{[A}} \left( \xi \ind*{^{aB]}} \nabla \ind{_a} \xi \ind*{_b^E}  \right) \xi \ind*{^b^{[C}} \xi \ind*{^{D]}} + \xi \ind*{^{[C}} \left( \xi \ind*{^{aD]}} \nabla \ind{_a} \xi \ind*{_b^E} \right) \xi \ind*{^b^{[A}} \xi^{B]} &  = 0 \, ;
  \end{align}
  \item ${}^\mfW _\xi \Pi_{-1}^0 (\Gamma ) = 0$ if and only if
    \begin{align}
   \left( \gamma \ind{^a_D^A} \nabla \ind{_a} \xi \ind*{^{bD}} \right) \xi \ind*{_b^{[B}} \xi^{C]} + 2 \left( \xi \ind*{^{aA}} \nabla \ind{_a} \xi \ind*{^{[B}} \right) \xi^{C]} & = 0 \, ; \label{eq-cond-10} 
  \end{align}
  \item ${}^\mfW _\xi \Pi_{-1}^1 (\Gamma ) = 0$ if and only if ($m>2$ only)
  \begin{align}\label{eq-cond-11}
      \left( \nabla \ind{_a} \xi \ind*{^b^{[B}} \right) \xi \ind*{_b^{C}} \xi^{D]} + \frac{1}{m-1} \gamma \ind{_a_E^{[B}} \left( \left( \gamma \ind{^c_F^E} \nabla \ind{_c} \xi \ind*{^{bF}} \right) \xi \ind*{_b^{|C}} + 2 \left( \xi \ind*{^{bE}} \nabla \ind{_b} \xi \ind*{^{|C}} \right) \right) \xi \ind*{^{D]}} & = 0 \, ;
  \end{align}
  \item ${}^\mfW _\xi \Pi_{-1}^2 (\Gamma ) = 0$ if and only if
  \begin{align}\label{eq-cond-12}
   \left( \xi \ind*{^{aA}} \nabla \ind{_a} \xi \ind*{^{[B}} \right) \xi^{C]} & = 0 \, ;
  \end{align}
  \item ${}^\mfW _\xi \Pi_0^0 (\Gamma ) = 0$ if and only if
  \begin{align}\label{eq-cond00}
   \left( \nabla \ind{_a} \xi \ind*{^{aA}} \right) \xi \ind*{^B} - \xi \ind*{^{aA}} \nabla \ind{_a} \xi \ind*{^B}  & = 0 \, ;
  \end{align}
  \item ${}^\mfW _\xi \Pi_0^1 (\Gamma ) = 0$ if and only if
  \begin{align}\label{eq-cond01}
 \left( \nabla \ind{_a} \xi \ind*{^{[B}} \right) \xi^{C]} - \frac{2}{m} \left( \left( \nabla \ind{_b} \xi \ind*{^{b[B}} \right) \xi \ind*{_a^{C]}} - \xi \ind*{^{b[B}}  \nabla \ind{_b} \xi \ind*{_a^{C]}} \right) & = 0 \, .
  \end{align}
 \end{itemize}
These statements are independent of the scale of $\xi^A$.
\end{prop}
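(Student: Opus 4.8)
The plan is to read each equivalence directly off the explicit formulae for the maps ${}^\mfW_\xi\Pi_i^j$ given in Section \ref{sec-intrinsic-torsion}, once the connection $1$-form $\Gamma_{abc}$ has been traded for covariant derivatives of $\xi^A$. First I would fix, as in the Remark preceding the statement, a background connection $\partial_a$ preserving both $\xi^A$ and $\eta_A$, so that \eqref{eq-cov-der-spinor} collapses to $\nabla_a\xi^A=-\tfrac14\Gamma_{abc}\xi^{bcA}$, equivalently
\[
\Gamma_{abc}\,\xi^{bcA}=-4\,\nabla_a\xi^A ,\qquad
\Gamma_{abc}\,\xi^{bc[B}\xi^{C]}=-4\,(\nabla_a\xi^{[B})\,\xi^{C]},\qquad
\gamma^{a}{}_{C}{}^{A}\,\nabla_a\xi^{C}=\nabla_a\xi^{aA},
\]
the last of these because $\xi^{aA}=\xi^C\gamma^a{}_C{}^A$ and the $\gamma$-matrices are parallel. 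Combined with the identity $(\nabla_a\xi^{bB})\xi_b{}^C=-(\nabla_a\xi^B)\xi^C+\Gamma_{abc}\xi^{bB}\xi^{cC}$ already recorded before the statement, together with its antisymmetrised descendants and with the purity condition \eqref{eq-purity_cond} $\xi^{aC}\xi_a{}^D=-\xi^C\xi^D$ and its differentiated consequences, this furnishes a complete dictionary converting every occurrence of $\Gamma$ in the defining formulae of the ${}^\mfW_\xi\Pi_i^j$ into an expression in $\nabla_a\xi^A$, $\nabla_a\xi^{bB}$ and $\xi^A$.

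Next I would dispatch the ``purely spinorial'' maps ${}^\mfW_\xi\Pi_{-3}^0$, ${}^\mfW_\xi\Pi_{-3}^1$, ${}^\mfW_\xi\Pi_{-2}^0$, ${}^\mfW_\xi\Pi_{-2}^1$, ${}^\mfW_\xi\Pi_{-2}^2$ and ${}^\mfW_\xi\Pi_{-1}^2$, for which the dictionary applies with no further algebra: for instance $\Gamma_{abc}\xi^{a[A}\xi^{bcB}\xi^{C]}=-4\,(\xi^{a[A}\nabla_a\xi^{B})\xi^{C]}$ produces \eqref{eq-cond-21}, $\Gamma_{abc}\xi^{aA}\xi^{bc[B}\xi^{C]}=-4\,(\xi^{aA}\nabla_a\xi^{[B})\xi^{C]}$ produces \eqref{eq-cond-12}, and contracting the identity $(\nabla_a\xi^{b[B})\xi_b{}^C\xi^{D]}=\Gamma_{abc}\xi^{b[B}\xi^{cC}\xi^{D]}$ with $\xi^{a[A}$ and skewing over $ABCD$ produces \eqref{eq-cond-30}; the remaining three are analogous. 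In each case the resulting tensor equals the one displayed in the proposition up to an overall nonzero numerical factor.

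The four remaining maps ${}^\mfW_\xi\Pi_{-1}^0$, ${}^\mfW_\xi\Pi_{-1}^1$, ${}^\mfW_\xi\Pi_0^0$, ${}^\mfW_\xi\Pi_0^1$ carry a free $\gamma$-matrix contracting the $1$-form slot of $\Gamma$, and here genuine algebra is required: after inserting the dictionary one must collapse the resulting triple $\gamma$-products by means of Lemma \ref{lem-technical} (in particular the contracted form $\gamma^a{}_A{}^B\gamma_{b_1\ldots b_p}{}_{BD}\gamma_a{}_C{}^D=(-1)^{m+p}(2p-2m-1)\gamma_{b_1\ldots b_p}{}_{AC}$), kill the spurious terms using \eqref{eq-purity_cond} and its differentiated version, and use $\nabla_b\xi^{bA}=\gamma^a{}_C{}^A\nabla_a\xi^C$ to recognise the outcome as \eqref{eq-cond-10}, \eqref{eq-cond-11}, \eqref{eq-cond00} and \eqref{eq-cond01} respectively. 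I expect this $\gamma$-matrix bookkeeping to be the main obstacle, compounded by the need to treat the degenerate case $m=2$ separately -- where the $\Spin(5,\C)$-invariant forms $\varepsilon_{AB},\varepsilon^{AB}$ enter the definitions of the $\Pi$'s and the modules $\mfW_{-1}^1$, $\mfW_{-3}^0$ drop out, so that \eqref{eq-cond-30} and \eqref{eq-cond-11} are vacuous there.

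Finally, the equivalences as stated follow from the representation theory of Section \ref{sec-intrinsic-torsion}: by Proposition \ref{prop-intorsion-class-alg} each ${}^\mfW_\xi\Pi_i^j$ is precisely the projection of the appropriate graded piece of $\mfW$ onto the irreducible $\prb$-component $\mfW_i^j$. Since the intrinsic torsion of $\langle\xi^A\rangle$ is by definition the image of $\nabla_a\langle\xi^A\rangle$ in $\mfW$, represented by $\Gamma_{abc}\xi^{bB}\xi^{cC}$, its $\mfW_i^j$-component vanishes if and only if ${}^\mfW_\xi\Pi_i^j(\Gamma)=0$, if and only if the corresponding covariant expression above vanishes; and this last condition is manifestly independent of the chosen splitting \eqref{eq-null-grading} and of the representative $\Gamma$, being written purely in terms of $\nabla_a\xi^A$ and $\xi^A$.
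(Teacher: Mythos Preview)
Your approach is correct and coincides with the paper's own. The paper does not spell out a formal proof of this proposition; instead, immediately before stating it, it records the key identity $(\nabla_a\xi^{bB})\xi_b{}^C=-(\nabla_a\xi^B)\xi^C+\Gamma_{abc}\xi^{bB}\xi^{cC}$ together with its antisymmetrised variants, and then declares that ``we are now in position to give covariant expressions characterising the intrinsic torsion'' --- which is precisely your dictionary, applied termwise to the defining formulae of the ${}^\mfW_\xi\Pi_i^j$ from Section~\ref{sec-intrinsic-torsion}.
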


\begin{rem}
The case $m=2$, i.e. $n=5$, is also dealt separately in Appendix \ref{sec-5dim}, where the spinor calculus simplifies the formulae above.
\end{rem}

\subsubsection{Geometric properties}
\begin{defn}
An almost null structure $\mcN$ is said to be \emph{integrable} if $[\Gamma(\mcN), \Gamma(\mcN)] \subset \Gamma(\mcN)$, \emph{totally geodetic} if $\nabla_X Y \in \Gamma( \mcN)$ for all $X, Y \in \Gamma(\mcN)$, \emph{co-integrable} if $[\Gamma(\mcN^\perp), \Gamma(\mcN^\perp)] \subset \Gamma(\mcN^\perp)$, and \emph{totally co-geodetic} if $\nabla_X Y \in \Gamma( \mcN^\perp )$ for all $X, Y \in \Gamma(\mcN^\perp)$.
\end{defn}
The geometric properties of $\mcN_\xi$ and $\mcN_\xi^\perp$ can be encoded in terms of differential conditions on $[\xi^A]$.
\begin{prop}\label{prop-foliating-spinor} Let $\mcN_\xi$ be an almost null structure with associated projective pure spinor field $[\xi^A]$ on $(\mcM,g)$. Then
 \begin{itemize}
  \item $[\Gamma(\mcN_\xi) , \Gamma(\mcN_\xi) ] \subset \Gamma(\mcN_\xi^\perp)$ if and only if
\begin{align}\label{eq-almost-foliating}
      \xi \ind*{^{[A}} \left( \xi \ind*{^{aB]}} \nabla \ind{_a} \xi \ind*{^b^{[C}} \right)  \xi \ind*{_b^{D}} \xi^{E]} & = 0 \, ;
\end{align}
  \item $\mcN_\xi$ is integrable if and only if \eqref{eq-cond-21} holds, i.e.
\begin{align*}
   \left( \xi \ind*{^{a[A}} \nabla \ind{_a} \xi \ind*{^{B}} \right) \xi^{C]} & = 0 \, ;
\end{align*}
  \item $\mcN_\xi$ is totally geodetic if and only if
   \begin{align}\label{eq-N-geodetic}
   \left( \xi \ind*{^{[A}} \xi \ind*{^{aB]}} \nabla \ind{_a} \xi \ind*{^{[B}} \right) \xi^{C]} & = 0 \, .
  \end{align}
  \item $\mcN_\xi$ is co-integrable if and only if
\begin{align}\label{eq-Nperp-int}
 \left( \xi \ind*{^a^{[A}} \nabla \ind{_a} \xi \ind*{^b^{B]}} \right) \xi \ind*{_b^{[C}} \xi \ind*{^{D]}} & = 0 \, ;
\end{align}
  \item $\mcN_\xi$ is integrable and co-integrable if and only if
  \begin{align}\label{eq-foliating}
      \left( \xi \ind*{^{aA}} \nabla \ind{_a} \xi \ind*{^b^{[B}} \right) \xi \ind*{_b^{C}} \xi^{D]} & = 0 \, ;
&
      \xi \ind*{^{[A}} \left( \xi \ind*{^{aB]}} \nabla \ind{_a} \xi \ind*{^{[C}} \right) \xi \ind*{^{D]}} & = 0 \, .
  \end{align}
    \item $\mcN_\xi$ is totally co-geodetic if and only if \eqref{eq-cond-12} holds, i.e.
   \begin{align*}
   \left( \xi \ind*{^{aA}} \nabla \ind{_a} \xi \ind*{^{[B}} \right) \xi^{C]} & = 0 \, .
  \end{align*}
   \end{itemize}
\end{prop}

\begin{proof}
We compute each of the conditions in turn using \eqref{eq-nabla-spin-Gamma} and \eqref{eq-connection1form} in terms of the connection components. It then suffices to interpret the vanishing of these components in terms of the Lie bracket relations (since $\nabla_a$ is torsionfree). More explicitly, these are given by
\begin{itemize}
\item $\Gamma \ind{^{ABC}} =0$,
\item $\Gamma \ind{^{ABC}} = \Gamma \ind{^{[A:B]}} = 0$,
\item $\Gamma \ind{^{ABC}} = \Gamma \ind{^{A:B}} = 0$,
\item $\Gamma \ind{^{ABC}}=0$ and $\Gamma \ind{^{A:B}} = \Gamma \ind{^{AB}}$, (in particular, $\Gamma \ind{^{(A:B)}} = 0$),
\item $\Gamma \ind{^{ABC}} = \Gamma \ind{^{AB}} = \Gamma \ind{^{A:B}} = 0$,
\item $\Gamma \ind{^{ABC}} = \Gamma \ind{^{AB}} = \Gamma \ind{^{A:B}} = \Gamma \ind{^A} = 0$,
\end{itemize}
respectively.
\end{proof}

In contrast to the even-dimensional case, a (co-)integrable almost null structure is not necessarily totally (co-)geodetic. However, it is straightforward to show, as an consequence of Proposition \ref{prop-foliating-spinor}, or otherwise:
\begin{lem}\label{lem-nice-impl}
Let $[\xi^A]$ be a projective pure spinor. Then \eqref{eq-cond-12} $\Rightarrow$ \eqref{eq-foliating} $\Rightarrow$ \eqref{eq-N-geodetic} $\Rightarrow$ \eqref{eq-cond-21} . Equivalently, for any almost null structure $\mcN$,
\begin{itemize}
\item if $\mcN$ is totally co-geodetic, then it is integrable and co-integrable;
\item if $\mcN$ is integrable and co-integrable, then it is totally geodetic;
\item if $\mcN$ is totally geodetic, then it is integrable.
\end{itemize}
\end{lem}

\begin{defn}
 Let $[ \xi^A ]$ be a holomorphic projective pure spinor field on $(\mcM,g)$ with almost null structure $\mcN_\xi$. 
We say that $\xi^A$ is \emph{geodetic}, respectively \emph{co-geodetic}, if $\mcN_\xi$ is totally geodetic, respectively co-geodetic.
\end{defn}

\begin{rem}
Proposition \ref{prop-intorsion-class} can also be used to characterise the properties given in Proposition \ref{prop-foliating-spinor} in terms of the intrinsic torsion $\Gamma _{abc} \xi^{bB} \xi^{cC} \in \mfW$ of the $P$-structure. In particular, \eqref{eq-almost-foliating} holds if and only if $\Gamma _{abc} \xi^{bB} \xi^{cC} \in \mfW^{-2}$. Similarly, \eqref{eq-foliating} holds if and only if $\Gamma _{abc} \xi^{bB} \xi^{cC} \in \mfW^{-1}$.
\end{rem}

\paragraph{Conformal invariance}
With reference to appendix \ref{sec-conformal}, we prove
\begin{prop}\label{prop-conformal}
 Conditions \eqref{eq-cond-30}, \eqref{eq-cond-31}, \eqref{eq-cond-20}, \eqref{eq-cond-21}, \eqref{eq-cond-22}, \eqref{eq-cond-11} and \eqref{eq-Nperp-int}, (and  thus \eqref{eq-almost-foliating}, \eqref{eq-foliating}) are conformally invariant.
 
 Suppose that $[ \xi \ind*{^A} ]$ satisfies \eqref{eq-cond-11} and 
     \begin{align*}
   \left( \gamma \ind{^a_D^A} \nabla \ind{_a} \xi \ind*{^{bD}} \right) \xi \ind*{_b^{[B}} \xi^{C]} + 2 \left( \xi \ind*{^{aA}} \nabla \ind{_a} \xi \ind*{^{[B}} \right) \xi^{C]} & = (n-3) \, \xi^A \xi^{[B} \xi \ind*{^b^{C]}} \nabla_b f  \, ;
  \end{align*}
for some holomorphic function $f$. Then there exists a conformal rescaling for which $[ \xi \ind*{^A} ]$ satisfies
    \begin{align}\label{eq-N-parallel}
      \left( \nabla \ind{_a} \xi \ind*{^b^{[B}} \right) \xi \ind*{_b^{C}} \xi^{D]} & = 0 \, ,
  \end{align}
i.e.\ $\nabla X \in \Gamma ( \mcN_\xi^\perp )$ for all $X \in \Gamma ( \mcN_\xi )$, where $\mcN_\xi$ is the almost null structure associated to $[ \xi^A ]$.
\end{prop}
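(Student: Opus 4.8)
The plan is to handle both halves of the proposition by a single mechanism: computing how the intrinsic torsion $\Gamma\ind{_{abc}}\xi\ind{^b^B}\xi\ind{^c^C}$, and the projections ${}^\mfW _\xi \Pi_i^j(\Gamma)$ of Proposition~\ref{prop-intorsion-class}, change under a conformal rescaling $\hat g_{ab}=\ee^{2f}g_{ab}$. Set $\Upsilon_a:=\nabla_a f$. From appendix~\ref{sec-conformal} the Levi-Civita connection transforms by $\hat\Gamma\ind{_{abc}}=\Gamma\ind{_{abc}}+\Upsilon_b g_{ac}-\Upsilon_c g_{ab}$ on vectors and by $\hat\nabla_a\xi^B=\nabla_a\xi^B+\tfrac12\Upsilon_c\,\gamma_a{}^c{}_D{}^B\,\xi^D$ modulo $\langle\xi^B\rangle$ on spinors, so — since the overall conformal weight of $\langle\xi\rangle$ is immaterial — the transformed and untransformed intrinsic torsions differ by an expression built purely from $\Upsilon$ contracted against copies of $\xi$. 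The workhorses are the purity conditions \eqref{eq-purity_cond}, \eqref{eq-purity_cond_Cartan} (in particular $\xi^{[B}\xi^{C]}=0$ and $\xi\ind{^a^A}\xi\ind{_a^B}=-\xi^A\xi^B$) and Lemma~\ref{lem-technical}, which controls the $\gamma^a\gamma_a$-type traces and is where factors of $n=2m+1$ appear.

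For the \textbf{conformal invariance} assertion I would substitute these transformation laws into each of the left-hand sides of \eqref{eq-cond-30}, \eqref{eq-cond-31}, \eqref{eq-cond-20}, \eqref{eq-cond-21}, \eqref{eq-cond-22}, \eqref{eq-cond-11}, \eqref{eq-Nperp-int} and check, using purity, that the $\Upsilon$-terms cancel, so each side merely scales by a nonzero power of $\ee^f$. The conceptual point is that the correction is a multiple of $\Upsilon$ visible only to the four projections ${}^\mfW _\xi \Pi_{-1}^0$, ${}^\mfW _\xi \Pi_{-1}^2$, ${}^\mfW _\xi \Pi_0^0$, ${}^\mfW _\xi \Pi_0^1$ (equivalently, conditions \eqref{eq-cond-10}, \eqref{eq-cond-12}, \eqref{eq-cond00}, \eqref{eq-cond01} are \emph{not} conformally invariant), while all the remaining ${}^\mfW _\xi \Pi_i^j$ annihilate it; the higher-filtration projections kill it because antisymmetrising forces $\xi\wedge\xi=0$. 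Conditions \eqref{eq-almost-foliating}, \eqref{eq-foliating} then follow as conjunctions of invariant conditions.

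For the second statement, first note that \eqref{eq-cond-11} therefore holds in every conformal scale. Next, working in the splitting \eqref{eq-null-grading} and reading off $\g_0$-components from \eqref{eq-connection1form} exactly as in the proof of Proposition~\ref{prop-foliating-spinor}, one checks that \eqref{eq-N-parallel} — i.e. $\Gamma\ind{_{abc}}\xi\ind{^b^{[B}}\xi\ind{^c^C}\xi^{D]}=0$, whose free index is contracted separately against $\xi\ind{_a^A}$, $u_a$ and $\eta\ind{_a_A}$ — is equivalent to $\Gamma\ind{_A^{BC}}=\Gamma^{BC}=\Gamma^{ABC}=0$ (wedging with $\xi$ being injective on the relevant skew spinor spaces), i.e. to the intrinsic torsion lying in the $\prb$-submodule $\mfV\otimes(\g^{-1}/\g^0)$ of $\mfW$, equivalently to ${}^\mfW _\xi \Pi_{-1}^0(\Gamma)={}^\mfW _\xi \Pi_{-1}^1(\Gamma)=0$. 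Now \eqref{eq-cond-11} is precisely ${}^\mfW _\xi \Pi_{-1}^1(\Gamma)=0$, and by the Penrose diagram of Proposition~\ref{prop-intorsion-class-alg} the submodule it carves out already has vanishing $\mfW_{-2}^0$-, $\mfW_{-3}^0$- and $\mfW_{-3}^1$-components, these lying strictly below $\mfW_{-1}^1$. Hence, granting \eqref{eq-cond-11}, establishing \eqref{eq-N-parallel} in some scale reduces to arranging the single condition ${}^\mfW _\xi \Pi_{-1}^0(\hat\Gamma)=0$.

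Finally, the computation from the first paragraph, carried out for ${}^\mfW _\xi \Pi_{-1}^0$, should give
\[
{}^\mfW _\xi \Pi_{-1}^0(\hat\Gamma)={}^\mfW _\xi \Pi_{-1}^0(\Gamma)-(n-3)\,\xi^A\xi^{[B}\xi\ind{^b^{C]}}\nabla_b f \pmod{\langle\xi\rangle},
\]
the coefficient $(n-3)$ emerging from the $\gamma^a\gamma_a$-reduction of Lemma~\ref{lem-technical}. Taking the conformal factor determined by the \emph{given} holomorphic $f$ (replacing $f$ by $-f$ if the sign requires it), the displayed hypothesis ${}^\mfW _\xi \Pi_{-1}^0(\Gamma)=(n-3)\,\xi^A\xi^{[B}\xi\ind{^b^{C]}}\nabla_b f$ forces ${}^\mfW _\xi \Pi_{-1}^0(\hat\Gamma)=0$; together with \eqref{eq-cond-11} (still valid) and the reduction above this yields \eqref{eq-N-parallel} for $\hat g$, i.e. $\hat\nabla X\in\Gamma(\mcN^\perp)$ for all $X\in\Gamma(\mcN)$, on the domain of $f$, hence locally. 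The main obstacle I expect is precisely these last two bookkeeping points: confirming that \eqref{eq-cond-11} indeed controls the higher-filtration pieces of the torsion entering \eqref{eq-N-parallel} (so the reduction to one scalar condition is legitimate), and pinning down the exact constant $(n-3)$ in the conformal variation of ${}^\mfW _\xi \Pi_{-1}^0$.
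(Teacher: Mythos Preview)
Your approach is correct and matches the paper's, which proves the proposition simply by reference to the conformal transformation formulae assembled in appendix~\ref{sec-conformal}; checking each of \eqref{eq-cond-30}--\eqref{eq-cond-22}, \eqref{eq-cond-11}, \eqref{eq-Nperp-int} against those formulae is exactly what is intended, and the paper singles out \eqref{eq-cond-11} explicitly there. For the second part your Penrose-diagram reduction is valid but goes slightly beyond what the paper spells out: the point you should make explicit is that ${}^\mfW_\xi\Pi_{-1}^1$ is $P$-equivariant (being built from $\xi$ and the $\gamma$-matrices alone), so its kernel is a genuine $P$-submodule of $\mfW$; once $\mfW_{-1}^1$ is excluded from the composition factors, the arrows $\mfW_{-1}^1\to\mfW_{-2}^0\to\mfW_{-3}^0,\mfW_{-3}^1$ force those three to be excluded as well, and this is what ``lying strictly below $\mfW_{-1}^1$'' should mean (not merely lower filtration degree, since $\mfW_{-2}^1$ and $\mfW_{-2}^2$ survive). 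Your constant $(n-3)$ and its sign are also correct: combining the last two displayed transformation formulae in appendix~\ref{sec-conformal} gives exactly ${}^\mfW_\xi\Pi_{-1}^0(\hat\Gamma)=\Omega^{-1}\bigl({}^\mfW_\xi\Pi_{-1}^0(\Gamma)+(n-3)\,\xi^A\Upsilon_b\xi^{b[B}\xi^{C]}\bigr)$, which with $\Upsilon_b=\nabla_b f$ cancels the hypothesis directly, no sign flip needed.
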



\paragraph{Curvature conditions}
The integrability conditions for these equations can easily be computed by differentiation a second time and commuting the covariant derivatives. 
\begin{prop}\label{prop-foliation-int-cond}
 Let $\xi^A$ be a geodetic spinor on $(\mcM,g)$, i.e.\ $\xi^A$ satisfies \eqref{eq-N-geodetic}. Then
\begin{align*}
{}^\mfC _\xi \Pi_{-3}^0 (C) & = 0 \, , & & \mbox{i.e.} & \xi \ind*{^{[A}} \xi \ind*{^a ^B} \xi \ind*{^b ^{C]}} \xi \ind*{^c ^D} \xi \ind*{^d ^E} C_{abcd} & = 0 \, .
\end{align*}
Suppose further that $\xi^A$ is co-geodetic, i.e.\ $\xi^A$ satisfies \eqref{eq-cond-12}. Then $\xi \ind*{^a ^A} \xi \ind*{^b ^B} \xi \ind*{^c ^C} \xi \ind*{^d ^D} R_{abcd} = 0$ and 
\begin{align*}
{}^\mfF _\xi \Pi_{-2}^0 (\Phi) & = 0 & \Longleftrightarrow & & {}^\mfC _\xi \Pi_{-2}^1 (C) & = 0 \, ,
\end{align*}
i.e.\ $\xi \ind*{^{[A}} \xi \ind*{^a^{B]}} \Phi \ind{_{ab}} \xi \ind*{^b ^{[C}} \xi \ind*{^{D]}} = 0$ if and only if $\xi \ind*{^a ^A} \xi \ind*{^b ^B} \xi \ind*{^c ^C} \xi \ind*{^d ^D} C_{abcd} = 0$.
\end{prop}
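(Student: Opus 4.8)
The plan is to compute the integrability conditions by differentiating the relevant first-order conditions a second time and antisymmetrising the covariant derivatives, so as to replace second derivatives of $\xi^A$ with the curvature operator $2\,\nabla_{[a}\nabla_{b]}\xi^C = -\tfrac14 R_{abcd}\gamma\ind{^{cd}_D^C}\xi^D$. First I would treat the geodetic/foliating case: starting from \eqref{eq-N-geodetic} (equivalently, from the decomposition in the proof of Proposition \ref{prop-foliating-spinor}, the vanishing of $\Gamma\ind{^{ABC}}$, $\Gamma\ind{^{A:B}}$ modulo whatever survives), I contract $\xi\ind{^{c[D}}\xi\ind{^{eE]}}\nabla_e$ onto the defining condition, commute to bring the curvature in, and then use the purity identity \eqref{eq-purity_cond} and Lemma \ref{lem-technical} to collapse the $\gamma$-matrix strings. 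The terms involving undifferentiated intrinsic-torsion components $\Gamma$ will reorganise into lower-degree pieces, and the genuinely new content is the totally spinor-saturated projection $\xi\ind{^{[A}}\xi\ind{^{aB}}\xi\ind{^{bC]}}\xi\ind{^{cD}}\xi\ind{^{dE}}C_{abcd}$, which is exactly ${}^\mfC_\xi\Pi_{-3}^0(C)$ up to the decomposition \eqref{eq-Riem_decomposition} (the Ricci and scalar pieces drop out after full antisymmetrisation against five copies of $\xi\ind{^a^A}$ because $\xi\ind{^a^A}\xi\ind{_a^B}=-\xi^A\xi^B$ forces the metric traces to vanish on this symmetry type).

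**The co-geodetic refinement.** For the second and third assertions I would assume in addition the co-geodetic condition \eqref{eq-cond-12}, i.e. $\Gamma\ind{^A}=0$ on top of $\Gamma\ind{^{ABC}}=\Gamma\ind{^{AB}}=\Gamma\ind{^{A:B}}=0$; by the proof of Proposition \ref{prop-foliating-spinor} this is the statement that the intrinsic torsion lies in $\mfW^0$, so $\nabla_a\xi^b{}^{B}\cdot(\text{contractions with }\xi)$ is controlled entirely by $\Gamma\ind{^{A:B}_{\ \ C}}$-type and $\mfW_0$-components. Then $\xi\ind{^{aA}}\nabla_a$ annihilates $\xi\ind{^{bB}}\xi\ind{_b^C}$-type objects up to $\mfW^0$-terms that themselves get killed by the remaining $\xi$'s, and differentiating once more and commuting gives $\xi\ind{^a^A}\xi\ind{^b^B}\xi\ind{^c^C}\xi\ind{^d^D}R_{abcd}=0$ directly. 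Feeding \eqref{eq-Riem_decomposition} into this, the pure-Weyl piece is ${}^\mfC_\xi\Pi_{-2}^1(C)$ (the symmetry bracketing $[AB]$, $[CD]$ rather than the total antisymmetrisation of the geodetic case), the Ricci-scalar term vanishes by purity, and the tracefree Ricci term is precisely $\xi\ind{^{[A}}\xi\ind{^{aB]}}\Phi_{ab}\xi\ind{^{b[C}}\xi\ind{^{D]}}$ up to a nonzero constant depending on $n$; this gives the stated equivalence ${}^\mfF_\xi\Pi_{-2}^0(\Phi)=0 \iff {}^\mfC_\xi\Pi_{-2}^1(C)=0$, and the final displayed restatement is just this read off in abstract-index form.

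**Main obstacle.** The routine part is the commutation of derivatives; the delicate part is bookkeeping which undifferentiated $\Gamma$-components are permitted to appear and checking that each one is annihilated by the outer contractions with $\xi\ind{^a^A}$, $\xi^A$ and $\eta_A$ — in other words, verifying that the second-order identity is genuinely an identity among curvature components and not polluted by first-order intrinsic-torsion data. Here one must use repeatedly that products like $\Gamma\ind{^{ABC}}$ vanish under the hypotheses and that $\xi\ind{^{a[A}}\xi\ind{^{bB]}}\xi\ind{_b^{C}}\cdots$ annihilates the $\mfW^0$-part; the purity conditions \eqref{eq-purity_cond_Cartan} do most of this work. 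A secondary nuisance is tracking the exact numerical coefficient ($n$-dependent) relating the tracefree-Ricci projection to the curvature contraction, which one pins down by contracting Lemma \ref{lem-technical} with an extra $g^{ab}$ as in its stated corollary. I expect no conceptual difficulty beyond this — the structure mirrors the even-dimensional argument in \cite{Taghavi-Chabert2012a}, adapted with the $\mfS^{\frac{m}{2}}$-degree shifts built into the filtration \eqref{eq-filtration}.
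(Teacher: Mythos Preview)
Your proposal is correct and follows the same route as the paper, which in fact offers no proof at all beyond the single sentence ``The integrability conditions for these equations can easily be computed by differentiation a second time and commuting the covariant derivatives.'' Your elaboration---differentiate the defining first-order condition along $\mcN$, commute to produce curvature, then use purity \eqref{eq-purity_cond} and the Riemann decomposition \eqref{eq-Riem_decomposition} to isolate the Weyl and tracefree-Ricci pieces---is exactly what is intended, and your check that the Ricci/scalar contributions drop out of the $\Pi_{-3}^0$ projection (because $\xi\ind{^{[A}}\xi\ind{^{aB}}\xi\ind{^{bC]}}\xi\ind{_b^E}=0$ by purity) and that the scalar piece drops out of $\xi\ind{^{aA}}\xi\ind{^{bB}}\xi\ind{^{cC}}\xi\ind{^{dD}}R_{abcd}$ is the substance the paper omits.
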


For a parallel projective pure spinor, we have the following -- see also \cite{Galaev2013} in more generality.
\begin{prop}\label{prop-recurrent-spinor}
Let $[ \xi^A ]$ be a parallel projective pure spinor on $(\mcM,g)$, i.e.\ $\xi^A$ satisfies \eqref{eq-recurrent}. Then
\begin{align}
\xi \ind*{^a ^A} \xi \ind*{^b ^B} R_{abcd} & = 0 \, , \label{eq-int_cond_Riem} \\
\xi \ind*{^a ^A} \xi \ind*{^b ^B} R_{ab} & = 0 \, , \label{eq-int_cond_Ric}\\
\xi \ind*{^a ^A} \xi \ind*{^b ^{\lb{B}}} \xi \ind*{^{\rb{C}}} \Phi_{ab} & = 0 \, , & & \mbox{i.e.} & {}^\mfF _\xi \Pi_{-1}^0 ( \Phi) & = 0 \, , \label{eq-int_cond_Phi} \\
\xi \ind*{^a ^A} \xi \ind*{^b ^B} \xi \ind*{^c ^{\lb{C}}} \xi \ind*{^{\rb{D}}} C_{abcd} & = 0 & & \mbox{i.e.} & {}^\mfC _\xi \Pi_{-1}^0 ( C ) = {}^\mfC _\xi \Pi_{-1}^1 ( C ) = {}^\mfC _\xi \Pi_{-1}^2 ( C ) & = 0 \, , \label{eq-int_cond_Weyl}
\end{align}
and in addition, when $m>2$,
\begin{align}
{}^\mfC _\xi \Pi_1^1 (C) & = 0 \, . \label{eq-m3d-cond}
\end{align}
Further,
\begin{align}
 R & = 0 & & \Longleftrightarrow & {}^\mfF _\xi \Pi_0^0 (\Phi) & = 0 & & \Longleftrightarrow & {}^\mfC _\xi \Pi_0^0 (C) & = 0  \, , \label{eq-1d-cond} \\
 {}^\mfF _\xi \Pi_0^1 (\Phi) & = 0 \, , & & \Longleftrightarrow & {}^\mfC _\xi \Pi_0^1 (C) & = 0 & & \Longleftrightarrow & {}^\mfC _\xi \Pi_0^2 (C) & = 0 \, , \label{eq-m2-1d-cond} \\
 {}^\mfF _\xi \Pi_1^0 (\Phi) & = 0 & & \Longleftrightarrow & {}^\mfC _\xi \Pi_1^0 (C) & = 0 \, . \label{eq-md-cond} 
\end{align}
\end{prop}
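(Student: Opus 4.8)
\textbf{Proof plan for Proposition \ref{prop-recurrent-spinor}.}

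The plan is to derive everything from the curvature identity obtained by differentiating \eqref{eq-recurrent} a second time and skew-symmetrising. Since $\nabla_a \xi^A = \alpha_a \xi^A$ for some $1$-form $\alpha_a$, a direct computation gives $2\nabla_{[a}\nabla_{b]}\xi^A = (2\nabla_{[a}\alpha_{b]})\,\xi^A$, and comparing with the spinor Ricci identity $2\nabla_{[a}\nabla_{b]}\xi^A = -\tfrac14 R_{abcd}\gamma^{cd}{}_B{}^A\xi^B$ shows that $R_{abcd}\gamma^{cd}{}_B{}^A\xi^B \propto \xi^A$, i.e. $R_{abcd}\xi^{cdB} \propto \xi^A$ after contraction with the $\gamma$-matrices. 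First I would record this as the master identity; everything else is extracting its $\prb$-irreducible components. Contracting the master identity with $\xi^{aA}\xi^{bB}$ (using the purity condition \eqref{eq-purity_cond} in the form $\xi^{aA}\xi_a{}^B=-\xi^A\xi^B$ to collapse the quadratic-in-$\xi$ terms) yields \eqref{eq-int_cond_Riem}; tracing \eqref{eq-int_cond_Riem} over a pair of indices with $g^{ab}$ or with the metric contraction giving the Ricci tensor yields \eqref{eq-int_cond_Ric}. Then \eqref{eq-int_cond_Phi} and \eqref{eq-int_cond_Weyl} follow by substituting the decomposition \eqref{eq-Riem_decomposition} of $R_{abcd}$ into \eqref{eq-int_cond_Riem} and into the appropriate partially-contracted versions of the master identity, and reading off which $\prb$-components of $\Phi_{ab}$ and $C_{abcd}$ must vanish — here one uses the explicit description of the projection maps ${}^\mfF_\xi\Pi_i^j$ and ${}^\mfC_\xi\Pi_i^j$ from appendix \ref{sec-spinor-descript} and the Penrose diagrams of Propositions \ref{prop-TFRicci-classification} and \ref{prop-Weyl-classification}.

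For the more refined vanishing statements I would contract the master identity against higher-weight combinations of $\xi^{aA}$ and $\gamma$-matrices. The condition \eqref{eq-m3d-cond}, i.e. ${}^\mfC_\xi\Pi_1^1(C)=0$, should come out of contracting $R_{abcd}\gamma^{cd}{}_B{}^A\xi^B\propto\xi^A$ with $\xi^{aA}$ alone (leaving one free vector index $b$) and then projecting onto the relevant $\g_0$-submodule using the map $\gamma$-contraction built into ${}^\mfC_\xi\Pi_1^1$; the proviso $m>2$ enters exactly because $\mfC_1^1$ does not occur when $m=2$, per Proposition \ref{prop-Weyl-classification}. For the equivalences \eqref{eq-1d-cond}, \eqref{eq-m2-1d-cond}, \eqref{eq-md-cond}, the point is that the Bianchi identity relates the trace parts: since $\nabla\langle\xi\rangle=0$ the spinor is recurrent, and the standard consequence is that $R$ is (locally) constant along $\mcN$; more precisely, one differentiates \eqref{eq-int_cond_Riem} once more, applies the second Bianchi identity $\nabla_{[e}R_{ab]cd}=0$, and contracts to obtain relations forcing $\xi^{aA}\nabla_a R=0$ and, when combined with the already-established curvature conditions, the stated biconditionals. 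Each "$\Longleftrightarrow$" is then just the observation, visible from the Penrose diagram of $\mfC$ (and the arrow structure linking $\mfC_0^0,\mfC_0^1,\mfC_0^2$ to the trace of $\Phi$), that the listed components are images of one another under the $\g_1$-action, so the integrability condition kills them all simultaneously or none.

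The main obstacle I anticipate is bookkeeping: organising the contractions of the master identity so that each desired $\prb$-component is isolated cleanly, without spurious lower-filtration terms, and matching the output against the precise (and, as the paper notes, somewhat baroque) definitions of the projectors ${}^\mfF_\xi\Pi_i^j$, ${}^\mfC_\xi\Pi_i^j$. Lemma \ref{lem-technical} will be the essential tool for reducing the triple $\gamma$-matrix products that arise, and the purity condition \eqref{eq-purity_cond} together with the identities for $\xi^{aA}\xi_a{}^B$, $\xi^{abA}$, and $I_A{}^B$ from section \ref{sec-algebra} will be used repeatedly to simplify. The equivalences in \eqref{eq-1d-cond}--\eqref{eq-md-cond} are the only place where a genuinely new input (the contracted second Bianchi identity, not just the algebraic curvature identity) is needed, and care is required there to distinguish which implications are automatic from the filtration structure and which require the differential Bianchi identity.
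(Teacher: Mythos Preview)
Your overall strategy for \eqref{eq-int_cond_Riem}--\eqref{eq-int_cond_Weyl} and \eqref{eq-m3d-cond} is essentially the paper's: differentiate the recurrence relation, commute, obtain $-\tfrac14 R_{abcd}\xi^{cdA}=(\nabla_{[a}\alpha_{b]})\xi^A$, and then extract $\prb$-components by contraction and the Riemann decomposition \eqref{eq-Riem_decomposition}. (One small point: to pass from the master identity to \eqref{eq-int_cond_Riem} you should invoke the pair symmetry $R_{abcd}=R_{cdab}$ rather than ``contract with $\xi^{aA}\xi^{bB}$'', since the master identity already has the $cd$ indices saturated; the equivalence of $R_{abcd}\xi^{cdA}\propto\xi^A$ and $\xi^{aA}\xi^{bB}R_{abcd}=0$ is exactly the characterisation of $\g^0$ given in the proof of the proposition following \eqref{eq-filtration}.)

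The genuine gap is in your treatment of the biconditionals \eqref{eq-1d-cond}--\eqref{eq-md-cond}. These are \emph{not} differential consequences and the second Bianchi identity plays no r\^{o}le whatsoever. They are purely algebraic: once $\xi^{aA}\xi^{bB}R_{abcd}=0$ holds, substituting the decomposition \eqref{eq-Riem_decomposition} expresses each surviving $\prb$-component of $C_{abcd}$ as an explicit linear combination of the corresponding $\prb$-component of $\Phi_{ab}$ and $R$. For instance, the paper computes directly that $\Phi_{ab}\xi^{aA}\xi^{bB}=\tfrac{1}{n}R\,\xi^A\xi^B$ and $C_{abcd}\xi^{abA}\xi^{cdB}=-2\tfrac{n-3}{n-2}R\,\xi^A\xi^B$, from which \eqref{eq-1d-cond} is immediate; similar explicit identities (e.g.\ $C_{bcad}\xi^{bc(B}\xi^{dD)}$ in terms of $\xi^{(B}\xi^{dD)}\Phi_{ad}$ and $R$) give \eqref{eq-md-cond} and, after matching dimensions of the isotypic pair $\mfC_0^1\oplus\mfC_0^2$ against $\mfF_0^1$, also \eqref{eq-m2-1d-cond}. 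Your appeal to the Penrose diagram and the $\g_1$-action is a misreading: those arrows describe how $\g_1$ moves between graded pieces of a \emph{single} tensor, not how Weyl components are tied to Ricci components. The tie here is the Riemann decomposition itself, and nothing more. Drop the Bianchi step entirely and instead write out the handful of contractions of \eqref{eq-Riem_decomposition} against $\xi^{aA}$, $\xi^{abA}$, etc., reading off each equivalence from the resulting algebraic identity.
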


\begin{proof}
Equations \eqref{eq-int_cond_Riem} and \eqref{eq-int_cond_Ric} are is a direct consequence of \eqref{eq-recurrent}. Equation \eqref{eq-int_cond_Phi} follows from relating $R_{ab}$ and $\Phi_{ab}$ as $\Phi \ind{_{ab}} \xi \ind*{^a^A} \xi \ind*{^b^B} = \frac{1}{n} R \, \xi \ind*{^A} \xi \ind*{^B}$, from which we also conclude the first part of \eqref{eq-1d-cond}. Next, \eqref{eq-Riem_decomposition} yields \eqref{eq-int_cond_Weyl}. To conclude the remaining conditions, we use the definitions of ${}^\mfF _\xi \Pi_i^j$ and ${}^\mfC _\xi \Pi_i^j$ together with the computations
\begin{align*}
 \xi \ind*{^c^C} C \ind{_{c[ab]d}} \xi \ind*{^d^D} & = \frac{2}{n-2} \xi \ind*{_{[a}^{[C}} \Phi \ind{_{b]d}} \xi \ind*{^{dD]}} + \frac{1}{n(n-1)}  R \, \xi \ind*{_{[a}^C} \xi \ind*{_{b]}^D} \, , \\
 C \ind{_{bcad}} \xi \ind*{^{bc}^B} \xi \ind*{^d^D} & = - \frac{2(n-4)}{n-2} \xi \ind*{^B} \xi \ind*{^d^D} \Phi \ind{_{ad}} - \frac{2}{n-2} \xi \ind*{^D} \xi \ind*{^{dB}} \Phi \ind{_{ad}}  - \frac{2(n-2)}{n(n-1)} R \, \xi \ind*{_a^D} \xi \ind*{^B} + \frac{2}{n(n-1)(n-2)} R \, \xi \ind*{_a^B} \xi \ind*{^D} \, , \\
    C \ind{_{bcad}} \xi \ind*{^{bc}^{(B}} \xi \ind*{^d^{D)}} & = - \frac{2(n-3)}{n-2} \left( \xi \ind*{^{(B}} \xi \ind*{^d^{D)}} \Phi \ind{_{ad}} + \frac{1}{n} R \, \xi \ind*{^{(B}} \xi \ind*{_a^{D)}} \right) \, , \\
 C \ind{_{bcad}} \xi \ind*{^{ad}^A} \xi \ind*{^{bc}^B} & = - 2 \frac{n-3}{n-2} R \, \xi \ind*{^A} \xi \ind*{^B}  \, , \\
 C \ind{_{abcd}} \xi \ind*{^b^B} \xi \ind*{^c^C} \xi \ind*{^d^D} & = - \frac{2}{n-2} \xi \ind*{^B} \xi \ind*{^{[C}} \Phi \ind{_{ad}} \xi \ind*{^d^{D]}} + \frac{2}{n(n-1)(n-2)} R \, \xi \ind*{^B} \xi \ind*{^{[C}} \xi \ind*{_a^{D]}} \, .
\end{align*}
In particular, we note that the dimensions of the irreducible $\prb$-invariant parts of the Weyl tensor must match those of the tracefree Ricci tensor. From the invariant diagram of Proposition \ref{prop-Weyl-classification}, one sees that condition \eqref{eq-m3d-cond} imposes algebraic conditions on elements of the isotopic modules $\mfC_0^1$ and $\mfC_0^2$, which, by dimension counting must match $\mfF_0^1$. More explicitly, on referring to the maps ${}^\mfC _\xi \Pi_i^j$, we have
\begin{align*}
{}^\mfC _\xi \Pi_0^1 ( C ) \ind{^{BC}_a}  \xi^A & = - 2 \frac{n-5}{n-3} \xi \ind*{^{(B}} {}^\mfF _\xi \Pi_0^1 ( \Phi ) \ind{_a^{C)A}} \, , &
 {}^\mfC _\xi \Pi_0^2 ( C ) \ind{^{A(BC)}_d} & = - \frac{1}{n-2} \xi \ind*{^{(B}} {}^\mfF _\xi \Pi_0^1 ( \Phi ) \ind{_d^{C)A}} \pmod{ \xi^B \xi^C \alpha_a^D } \, , 
\end{align*}
where we have rewritten ${}^\mfF _\xi \Pi_0^1 ( \Phi ) := \xi \ind*{^A} \xi \ind*{^b ^{B}} \Phi \ind{_{b a}} - \frac{1}{n-1}  \xi \ind*{^b ^A} \xi \ind*{^c ^C} \Phi \ind{_{bc}} \gamma \ind{_b _C ^B} \pmod{ \xi^A \xi^B \alpha_a}$. Condition \eqref{eq-m2-1d-cond} now follows.
\end{proof}

\subsection{Spinorial differential equations}
\subsubsection{Scale-dependent geodetic and co-geodetic spinors}
A scale-dependent variation of \eqref{eq-N-geodetic} is given by $\xi \ind*{^{[A}} \xi \ind*{^{aB]}} \nabla \ind{_a} \xi \ind*{^{B}} = 0$, with integrability condition ${}^\mfC _\xi \Pi_{-2}^0 (C) = \xi \ind*{^{[A}} \xi \ind*{^a ^B} \xi \ind*{^b ^{C]}} \xi \ind*{^{cd} ^D} C_{abcd} = 0$. This is conformally invariant provided $\xi^A$ has conformal weight $-1$. 

Similarly, a scale-dependent variation of \eqref{eq-cond-12} is given by $\xi \ind*{^{aA}} \nabla \ind{_a} \xi \ind*{^{B}} = 0$, with integrability conditions given by $\xi \ind*{^a ^A} \xi \ind*{^b ^B} \xi \ind*{^{cd} ^C} R_{abcd} = 0$. Further, ${}^\mfF _\xi \Pi_{-1}^0 ( \Phi ) = \xi \ind{^{\lb{A}}} \xi \ind{^a ^{\rb{B}}} \Phi \ind{_{a b}} \xi \ind{^b ^C} =0$ if and only if ${}^\mfC _\xi \Pi_{-1}^0 (C) = \xi \ind{^a ^A} \xi \ind{^b ^B} C \ind{_{abcd}} \xi \ind{^{cd} ^C}=0$

\subsubsection{Parallel pure spinors}
The next proposition follows from Proposition \ref{prop-recurrent-spinor}.
\begin{prop}\label{prop-int-cond-par}
 Let $\xi^A$ be a parallel pure spinor field on $(\mcM,g)$, i.e.\ $\nabla_a \xi^A = 0$. Then
$R \ind{_{abcd}} \xi \ind*{^{cd}^D} = 0$, ${}^\mfF _\xi \Pi_1^0 (\Phi) = \Phi \ind{_{ab}} \xi \ind*{^b^B} = 0$, $R  = 0$, and ${}^\mfC _\xi \Pi_2^0 (C) = C \ind{_{abcd}} \xi \ind*{^{cd}^D} = 0$.
\end{prop}
\subsubsection{Null zero-rest-mass fields}
The smaller irreducible part of the covariant derivative of a spinor field $\xi^A$ leads to the \emph{(Weyl-)Dirac equation} 
\begin{align}\label{eq-Weyl-Dirac}
 \gamma \ind{^a _A^B} \nabla_a \xi^A & = 0 \, ,
\end{align}
In contrast to even dimensions, this equation admits not one, but \emph{two} generalisations to irreducible spinor fields of higher valence.

\begin{defn}
Let $\phi^{A_1 A_2 \ldots A_k} = \phi^{(A_1A_2 \ldots A_k)}$ be a holomorphic spinor field on $(\mcM,g)$ irreducible in the sense that $\gamma \ind{^a_{A_1}^C} \gamma \ind{_a_{A_2}^D} \phi^{A_1 A_2 A_3 \ldots A_k} = - \phi^{C D A_3\ldots A_k}$. We say that $\phi \ind{^{A_1 \ldots A_k}}$ is a \emph{zero-rest-mass (zrm) field} if it satisfies
\begin{align}
 \gamma \ind{^a _{B}^{\lp{A_1}}} \nabla_a \phi^{A_2 \ldots \rp{A_k}B} & = 0 \, , \label{eq-zrm}
 \end{align}
and a \emph{co-zero-rest-mass (co-zrm) field} if it satisfies
\begin{align}
 \gamma \ind{^a _{B}^{\lb{A_1}}} \nabla_a \phi^{\rb{A_2} A_3 \ldots A_k B} & = 0 \, . \label{eq-co-zrm}
\end{align}
\end{defn}

\begin{rem}
When $k=2$, an irreducible spinor field as above is simply an $m$-form, or by Hodge duality, an $(m+1)$-form. Equation \eqref{eq-zrm}, respectively, \eqref{eq-co-zrm} are then equivalent to this $m$-form to be closed, respectively, co-closed, hence the use of terminology. This follows from the fact that matrices $\gamma \ind{_{a_1 \ldots a_{m+1}}^{AB}}$ and $\gamma \ind{_{a_1 \ldots a_{m-1}}^{AB}}$ are symmetric and skewsymmetric respectively.
\end{rem}

Equations \eqref{eq-zrm} and \eqref{eq-co-zrm} are conformally invariant provided that $\phi^{A_1 \ldots A_k}$ is of conformal weight $-m-\frac{k}{2}$ and $-m-k$ respectively. In particular, a solution of both \eqref{eq-zrm} and \eqref{eq-co-zrm}, i.e.\
\begin{align}
 \gamma \ind{^a _{B}^{A_1}} \nabla_a \phi^{A_2 \ldots A_k B} & = 0 \, , \label{eq-nc-zrm} 
\end{align}
is not conformally invariant. In the case $k=2$, such a solution corresponds to a closed and co-closed $m$-form.

The integrability condition for the existence of solutions to equations \eqref{eq-zrm} and \eqref{eq-co-zrm} of valence greater than two is given by the following lemma.
\begin{prop}\label{prop-int-zrm}
For $k>2$, let $\phi^{A_1 A_2 \ldots A_k}$ be a solution of \eqref{eq-zrm} or \eqref{eq-co-zrm} on $(\mcM,g)$. Then
 \begin{align}\label{eq-int-cond-zrm}
  \gamma \ind{^a_{C_1}^{A}} \gamma \ind{^b_{C_2}^{B}} C \ind{_{abcd}} \gamma \ind{^{cd}_D^{(C_3}} \phi \ind{^{C_4 \ldots C_k) C_1 C_2 D}} = 0 \, .
 \end{align}
 If $\phi^{A_1 A_2 \ldots A_k}$ is a solution of \eqref{eq-nc-zrm}, then we have in addition
 \begin{align}\label{eq-int-cond-nc-zrm}
\gamma \ind{^b_{C_2}^{[A|}} \Phi \ind{_{bd}} \gamma \ind{^d_D^{(C_3}} \phi \ind{^{C_4 \ldots C_k)C_2 D |B]}} = 0 \, .
 \end{align}
\end{prop}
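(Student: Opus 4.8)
The plan is to derive the integrability conditions by applying the spinor Ricci identity to a solution of the relevant field equation. The starting point is the curvature identity for spinors, which for a valence-$k$ symmetric spinor field $\phi^{A_1 \ldots A_k}$ reads
\begin{align*}
 2 \nabla \ind{_{\lb{a}}} \nabla \ind{_{\rb{b}}} \phi^{A_1 \ldots A_k} & = - \frac{k}{4} R \ind{_{abcd}} \gamma \ind{^{cd}_B^{(A_1}} \phi^{A_2 \ldots A_k) B} \, ,
\end{align*}
which follows by Leibniz from the single-spinor curvature formula stated just before Remark on Notation. First I would contract both free lower indices $a,b$ with $\gamma$-matrices carrying the appropriate spinor indices, mimicking the structure of equation \eqref{eq-zrm}: apply $\gamma \ind{^a_{C_1}^{B_1}} \gamma \ind{^b_{C_2}^{B_2}}$ and then, using the zrm equation \eqref{eq-zrm} (or the co-zrm equation \eqref{eq-co-zrm}) together with the irreducibility condition $\gamma \ind{^a_{A_1}^C} \gamma \ind{_a_{A_2}^D} \phi^{A_1 A_2 \ldots} = - \phi^{CD \ldots}$, one shows that the left-hand side vanishes after appropriate symmetrisation over the upper spinor indices. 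This is the standard trick: differentiating \eqref{eq-zrm} once more and commuting forces a curvature term to vanish.

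The key computational step is to decompose the Riemann tensor on the right-hand side via \eqref{eq-Riem_decomposition} into its Weyl, tracefree Ricci, and scalar parts. For the purely null-null contractions appearing in \eqref{eq-int-cond-zrm} the metric terms $g \ind{_{a[c}} g \ind{_{d]b}}$ and $g \ind{_{d]|b]}}$ will contract against $\gamma$-matrices and $\phi$ in ways that either vanish identically by the Clifford property \eqref{eq-red_Clifford_property} or collapse using the irreducibility condition; the point is that the tracefree Ricci contributions drop out of the fully-symmetrised-and-null-contracted identity because the relevant $\Phi$-terms are proportional to things like $\gamma \ind{^a_{C_1}^{A}} \gamma \ind{_{a\,d}} = 0$ or reduce to lower-valence pieces that are killed by the symmetrisation $(C_3 \ldots C_k)$. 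What survives is precisely the Weyl-tensor term $\gamma \ind{^a_{C_1}^{A}} \gamma \ind{^b_{C_2}^{B}} C \ind{_{abcd}} \gamma \ind{^{cd}_D^{(C_3}} \phi \ind{^{C_4 \ldots C_k) C_1 C_2 D}}$, giving \eqref{eq-int-cond-zrm}. For a solution of \eqref{eq-co-zrm} the same manipulation, with the skew-symmetrisation over the first two indices replacing the symmetrisation, yields the same Weyl condition once one notes that the Weyl term is already symmetric enough in the relevant slots.

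For the last assertion, a solution of \eqref{eq-nc-zrm} satisfies \emph{both} \eqref{eq-zrm} and \eqref{eq-co-zrm}, so one has extra room: instead of contracting both lower curvature indices with null $\gamma$-matrices, I would contract only one of them with $\gamma \ind{^b_{C_2}^{B}}$ and leave the other contraction structured so as to isolate the $\Phi$-part. Using \eqref{eq-Riem_decomposition} again, the Weyl contribution is controlled by \eqref{eq-int-cond-zrm} which we already have, the scalar part is symmetric and gets annihilated by the skew-symmetrisation $[A|\ldots|B]$, and one is left with the tracefree Ricci term $\gamma \ind{^b_{C_2}^{[A|}} \Phi \ind{_{bd}} \gamma \ind{^d_D^{(C_3}} \phi \ind{^{C_4 \ldots C_k)C_2 D |B]}}$, which is \eqref{eq-int-cond-nc-zrm}. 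The main obstacle I anticipate is bookkeeping: keeping track of which symmetrisations and skew-symmetrisations are imposed on which index groups, and verifying carefully that the metric/trace terms really do vanish rather than contributing spurious extra terms — this requires repeated use of Lemma \ref{lem-technical} and the irreducibility constraint, and a small amount of care about factors of $(-1)^m$ and $(n$-dependent$)$ coefficients, none of which affects the stated form of the conditions since they only assert vanishing.
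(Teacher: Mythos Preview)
Your overall strategy --- differentiate the field equation, commute derivatives, and decompose the Riemann tensor --- is correct and is what the paper does. But the step where you claim that ``the tracefree Ricci contributions drop out of the fully-symmetrised-and-null-contracted identity'' by Clifford algebra and irreducibility is not right, and this is the crux of the argument.

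In the paper's derivation one writes the field equation uniformly as $\gamma \ind{^a_B^{A_1}} \nabla_a \phi^{A_2 \ldots A_k B} = \psi^{A_1 \ldots A_k}$, where $\psi$ vanishes only after symmetrisation (zrm case), antisymmetrisation (co-zrm case), or identically (nc-zrm case). Differentiating and commuting yields a master identity of the form
\[
(k-2)\bigl(\text{Weyl term}\bigr) \; - \; 4(k-2)\,\gamma \ind{^b_{C_2}^{[A|}} \Rho \ind{_{bd}} \gamma \ind{^d_D^{(C_3}} \phi \ind{^{C_4 \ldots C_k)C_2 D|B]}} \; = \; 2\,\gamma \ind{^a_D^{[A}} \nabla_a \psi^{B]C_3 \ldots C_k D}\,.
\]
The Schouten (hence tracefree Ricci) contribution is \emph{not} zero on its own: it equals the $\psi$-derivative on the right, which is generically nonvanishing for a zrm or co-zrm field. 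What isolates the Weyl term is \emph{conformal invariance}: since \eqref{eq-zrm} and \eqref{eq-co-zrm} are conformally invariant, the conformally invariant part of the identity (the Weyl piece) must vanish separately, and the conformally non-invariant pieces (the $\Rho$-term and the $\psi$-term) must cancel each other. For \eqref{eq-nc-zrm} one has $\psi=0$, conformal invariance is lost, and the $\Rho$-term then gives the additional constraint \eqref{eq-int-cond-nc-zrm} directly --- not via a separate contraction as you suggest. So the missing idea in your proposal is precisely this conformal-invariance argument; the Ricci terms will not disappear under Clifford manipulations alone.
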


\begin{proof}
Equations \eqref{eq-zrm}, \eqref{eq-co-zrm} and \eqref{eq-nc-zrm} can be rewritten as $\gamma \ind{^a _{B}^{A_1}} \nabla_a \phi^{A_2 \ldots A_k B}  = \psi \ind{^{A_1 A_2 \ldots A_k}}$, where $\psi \ind{^{(A_1 A_2 \ldots A_k)}} =0$, $\psi \ind{^{[A_1 A_2] A_3\ldots A_k}} =0$, and $\psi \ind{^{A_1 A_2 \ldots A_k}} = 0$ respectively.
Taking a second covariant derivative and commuting lead to
 \begin{multline*}
(k-2) \,  \gamma \ind{^a_{C_1}^{A}} \gamma \ind{^b_{C_2}^{B}} C \ind{_{abcd}} \gamma \ind{^{cd}_D^{(C_3}} \phi \ind{^{C_4 \ldots C_k) C_1 C_2 D}} - 4 (k-2) \, \gamma \ind{^b_{C_2}^{[A|}} \Rho \ind{_{bd}} \gamma \ind{^d_D^{(C_3}} \phi \ind{^{C_4 \ldots C_k)C_2 D |B]}} \\
= 2 \, \gamma \ind{^a_D^{[A}} \nabla \ind{_a} \psi \ind{^{B] C_3 C_4 \ldots C_k D}} \, .
 \end{multline*}
By the conformal invariance of \eqref{eq-zrm} and \eqref{eq-co-zrm}, the first term on the LHS must vanish identically, while the second term on the LHS cancels the RHS, hence \eqref{eq-int-cond-zrm}. When \eqref{eq-nc-zrm} holds, conformal invariance is broken, and one has the additional constraint \eqref{eq-int-cond-nc-zrm}.
\end{proof}

A spinor field $\phi^{A_1 A_2 \ldots A_k}$ is referred to as \emph{null} if it takes the form $\phi^{A_1 A_2 \ldots A_k} = \ee^\psi \xi^{A_1} \xi^{A_2} \ldots \xi^{A_k}$ for some holomorphic pure spinor field $\xi^A$, and holomorphic function $\psi$. Specialising Proposition \ref{prop-int-zrm} yields
\begin{cor}
For $k>2$, suppose that $\phi^{A_1 A_2 \ldots A_k} := \ee^\psi \xi^{A_1} \xi^{A_2} \ldots \xi^{A_k}$ is a solution of \eqref{eq-zrm} or \eqref{eq-co-zrm} on $(\mcM,g)$. Then
 \begin{align}\label{eq-simple-zrm-int-cond}
 {}^\mfC _\xi \Pi_{-1}^0 (C) & = 0 \, , & & \mbox{i.e.} & \xi \ind*{^a^A} \xi \ind*{^a^B } C \ind{_{abcd}} \xi \ind*{^{cd}^{C}} & = 0 \, , 
 \end{align}
Further, if $\phi^{A_1 A_2 \ldots A_k}$ is a solution of \eqref{eq-nc-zrm}, then we have in addition
 \begin{align}\label{eq-simple-zrm-int-cond-xtra}
 {}^\mfF _\xi \Pi_{-1}^0 (\Phi) & = 0 \, , & & \qquad \mbox{i.e.} & \xi \ind*{^a^{A}} \xi \ind*{^b^{[B}} \Phi \ind{_{ab}} \xi \ind*{^{C]}} = 0 \, .
 \end{align}
\end{cor}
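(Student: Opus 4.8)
The plan is to specialise the general integrability condition of Proposition \ref{prop-int-zrm} to a null field $\phi^{A_1 \ldots A_k} = \ee^\psi \xi^{A_1} \cdots \xi^{A_k}$ and extract the content at leading order in the spinor $\xi$. First I would substitute this ansatz into \eqref{eq-int-cond-zrm}. Since every contraction of $\gamma$-matrices against $\xi^{A_i}$ produces terms involving $\xi^A$ and, via the purity condition \eqref{eq-purity_cond}, lower-order pieces, the key observation is that the factor $\phi^{C_4 \ldots C_k) C_1 C_2 D} = \ee^\psi \xi^{C_4} \cdots \xi^{C_k} \xi^{C_1} \xi^{C_2} \xi^D$ contributes $\xi^{C_1}\xi^{C_2}\xi^D$ to be contracted against $\gamma \ind{^a_{C_1}^A} \gamma \ind{^b_{C_2}^B} C_{abcd} \gamma \ind{^{cd}_D^{(C_3}}$. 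Using $\xi^{C_1} \gamma \ind{^a_{C_1}^A} = \xi \ind{^{aA}}$ and similarly for the other two contractions, the whole expression collapses to $\xi \ind{^{aA}} \xi \ind{^{bB}} C_{abcd} \xi \ind{^{cd}^{(C_3}} \xi^{C_4} \cdots \xi^{C_k)}$ (up to the overall $\ee^\psi$). Stripping off the common factor $\xi^{C_4} \cdots \xi^{C_k}$ — which is legitimate because $k>2$ guarantees at least one free spinor index carrying the Weyl information and the $\xi$'s are non-zero — yields exactly \eqref{eq-simple-zrm-int-cond}, i.e. ${}^\mfC _\xi \Pi_{-1}^0 (C) = 0$ after recognising $\xi \ind{^{aA}} \xi \ind{^{bB}} C_{abcd} \xi \ind{^{cd}^C}$ as the defining expression for that projection map.

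For the second statement, I would repeat the argument with \eqref{eq-int-cond-nc-zrm}: substituting the null ansatz and performing the same $\gamma$-contractions against the $\xi$'s gives $\xi \ind{^{bB}} \Phi_{bd} \gamma \ind{^d_D^{(C_3}} \xi^D \xi^{C_4} \cdots = \xi \ind{^{bB}} \Phi_{bd} \xi \ind{^{d(C_3}} \xi^{C_4} \cdots$, skew-symmetrised appropriately in the relevant pair of indices as dictated by the $[A|\ldots|B]$ structure. After cancelling the common $\xi^{C_4}\cdots\xi^{C_k}$, this leaves $\xi \ind{^{aA}} \xi \ind{^{b[B}} \Phi_{ab} \xi^{C]} = 0$, which is precisely ${}^\mfF _\xi \Pi_{-1}^0 (\Phi) = 0$ by the description of that map in Proposition \ref{prop-recurrent-spinor} (or appendix \ref{sec-spinor-descript}). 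I would note that one must be slightly careful matching the index symmetrisation/skew-symmetrisation conventions between \eqref{eq-int-cond-nc-zrm} and the definition of ${}^\mfF _\xi \Pi_{-1}^0$, but since both are built from the same tensor $\Phi_{ab}\xi\xi\xi$ and the Weyl-symmetric and Ricci-symmetric parts have matching irreducible decompositions (as already exploited in the proof of Proposition \ref{prop-recurrent-spinor}), the two conditions are equivalent.

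The main obstacle I anticipate is purely bookkeeping: making sure that when the $\gamma$-matrices are contracted against the $\xi$'s, no unwanted lower-filtration terms survive. Concretely, products like $\gamma \ind{^a_{C_1}^A}\xi^{C_1} = \xi \ind{^{aA}}$ are clean, but the $\gamma \ind{^{cd}_D^{C_3}}$ factor acting on $\xi^D$ produces $\xi \ind{^{cdC_3}}$, and one needs the purity condition and Lemma \ref{lem-technical} to see that contracting $\xi \ind{^{cdC_3}}$ further against $C_{abcd}\xi \ind{^{aA}}\xi \ind{^{bB}}$ indeed reproduces the projection ${}^\mfC _\xi \Pi_{-1}^0(C)$ and not something that mixes in, say, ${}^\mfC _\xi \Pi_{-1}^1(C)$ or ${}^\mfC _\xi \Pi_{-1}^2(C)$. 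In fact, since $\phi$ is totally symmetric and null, the symmetrisation over $(C_3 C_4 \ldots C_k)$ in \eqref{eq-int-cond-zrm} automatically projects onto the relevant irreducible component, so the cleanest route is to observe that the null ansatz forces all of ${}^\mfC _\xi \Pi_{-1}^0(C)$, ${}^\mfC _\xi \Pi_{-1}^1(C)$, ${}^\mfC _\xi \Pi_{-1}^2(C)$ to appear only through the single combination stated, and that the other two are already implied — but since only \eqref{eq-simple-zrm-int-cond} is claimed, it suffices to exhibit that one. I would therefore present the computation at the level of contracting \eqref{eq-int-cond-zrm} and \eqref{eq-int-cond-nc-zrm} with the null ansatz, citing the purity conditions \eqref{eq-purity_cond}, \eqref{eq-purity_cond_Cartan} and Lemma \ref{lem-technical} to justify the simplifications, and leave the routine index manipulation to the reader, exactly as the excerpt does elsewhere.
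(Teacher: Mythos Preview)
Your approach is correct and matches the paper, which simply states that the corollary follows by specialising Proposition \ref{prop-int-zrm} and gives no further detail. Your concern about ${}^\mfC_\xi\Pi_{-1}^1(C)$ or ${}^\mfC_\xi\Pi_{-1}^2(C)$ mixing in is unnecessary: substituting the null ansatz into \eqref{eq-int-cond-zrm} yields $\xi^{aA}\xi^{bB}C_{abcd}\xi^{cd(C_3}\xi^{C_4}\cdots\xi^{C_k)}$, and $\xi^{aA}\xi^{bB}C_{abcd}\xi^{cdC}$ is verbatim the defining expression for ${}^\mfC_\xi\Pi_{-1}^0(C)$ given in appendix \ref{sec-spinor-descript}, so no other projection appears.
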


The relation between null solutions of the zrm-field equation and the existence of foliating spinors is known as the Robinson theorem \cite{Robinson1961} in four dimensions, and was later generalised to even dimensions in \cite{Hughston1988}. Here, we give odd-dimensional versions of the theorem.

\begin{thm}[Robinson theorem for zrm fields]\label{thm-Robinson-zrm}
 Let $\xi^A$ be a holomorphic pure spinor field on $(\mcM,g)$  with almost null structure $\mcN_\xi$. Let $\psi$ be a holomorphic function and suppose that $\phi^{A_1 A_2 \ldots A_k} := \ee^\psi \xi^{A_1} \xi^{A_2} \ldots \xi^{A_k}$ satisfies the zrm field equation \eqref{eq-zrm}. Then locally, $\xi^A$ satisfies 
\begin{align}
\xi \ind*{^{[A}} \left( \xi \ind*{^a^{B]}} \nabla \ind{_a} \xi \ind*{^{[C}} \right) \xi \ind*{^{D]}} + \xi \ind*{^{[C}} \left( \xi \ind*{^a^{D]}} \nabla \ind{_a} \xi \ind*{^{[A}} \right) \xi \ind*{^{B]}} & = 0 \, , \label{eq-Nperp-int-zrm1} \\
 \left( \xi \ind*{^a^{[A}} \nabla \ind{_a} \xi \ind*{^b^{B]}} \right) \xi \ind*{_b^{[C}} \xi \ind*{^{D]}} - \frac{k-2}{2k} \left(  \xi \ind*{^{[A}} \left( \xi \ind*{^a^{B]}} \nabla \ind{_a} \xi \ind*{^b^C} \right) \xi \ind*{_b^D} + \xi \ind*{^{[A}} \left( \xi \ind*{^a^{B]}} \nabla \ind{_a} \xi \ind*{^{C}} \right) \xi \ind*{^{D}} \right) & = 0 \, . \label{eq-Nperp-int-zrm2}
\end{align}
In particular, $[\Gamma(\mcN_\xi) , \Gamma( \mcN_\xi )] \subset \Gamma ( \mcN^\perp_\xi )$. When $k=2$, $\xi^A$ locally satisfies \eqref{eq-Nperp-int}, i.e.\ $\mcN_\xi$ is co-integrable.

Suppose that $\xi^{A}$ satisfies \eqref{eq-Nperp-int}, i.e.\ $\mcN_\xi$ is co-integrable. Then locally there exists a holomorphic function $\psi$ such that the spinor field $\phi^{A B} := \ee^\psi \xi \ind*{^A} \xi \ind*{^B}$ satisfies \eqref{eq-zrm}. There is the freedom of adding to $\psi$ a holomorphic function constant along the leaves of $\mcN_\xi^\perp$.
\end{thm}

\begin{proof}
 For any $\phi^{A_1 A_2 \ldots A_k} := \ee^\psi \xi^{A_1} \xi^{A_2} \ldots \xi^{A_k}$, we have, in regions where $\phi^{A_1 A_2 \ldots A_k}$ does not vanish,
\begin{align}\label{eq-master}
\gamma \ind{^a_B^{A_1}}  \nabla \ind{_a} \phi \ind{^{A_2 \ldots A_k B}} & = \ee^\psi \left( \xi \ind*{^{A_2}} \ldots \xi \ind*{^{A_k}} \xi \ind*{^a^{A_1}} \nabla \ind{_a} \psi + (k-1) \left( \xi \ind*{^a^{A_1}} \nabla \ind{_a} \xi \ind*{^{(A_2}} \right) \xi \ind*{^{A_3}} \ldots \xi \ind*{^{A_k)}} +  \left( \nabla \ind{_a} \xi \ind*{^a^{A_1}} \right) \xi \ind*{^{A_2}} \ldots \xi \ind*{^{A_k}} \right) \, .	
\end{align}
If $\phi \ind{^{A_1 \ldots A_k}}$ satisfies \eqref{eq-zrm}, then we have
\begin{align}\label{eq-master-zrm}
 0 & = \xi \ind*{^{(A_2}} \xi \ind*{^{A_3}} \ldots \xi \ind*{^{A_k}} \xi \ind*{^a^{A_1)}} \nabla \ind{_a} \psi + (k-1) \left( \xi \ind*{^a^{(A_1}} \nabla \ind{_a} \xi \ind*{^{A_2}} \right) \xi \ind*{^{A_3}} \ldots \xi \ind*{^{A_k)}} +  \left( \nabla \ind{_a} \xi \ind*{^a^{(A_1}} \right) \xi \ind*{^{A_2}} \xi \ind*{^{A_3}} \ldots \xi \ind*{^{A_k)}} \, .
\end{align}
Tensoring with $\xi^B \xi^C$ and skewing over $A_1 B$ and $A_2 C$ lead to \eqref{eq-Nperp-int-zrm1}. Working in the splitting \ref{eq-null-grading} with a choice of spinor $\eta_A$ dual to $\xi^A$, and using \eqref{eq-connection1form}, this implies $\Gamma ^{(B:C)} = \Gamma ^{ABC} = 0$, i.e.\ $\xi^A$ satisfies \eqref{eq-almost-foliating}. Expanding \eqref{eq-master-zrm} now yields
\begin{align*}
0 & = \left( - \frac{1}{4} \left( k \, \Gamma \ind{^{BC}} - 2 \, \Gamma \ind{^{B:C}} \right) \eta \ind{_a_B} \gamma \ind{^a_C^{(A_1}} + \psi \ind{^{(A_1}} \right) \xi \ind*{^{A_2}} \xi \ind*{^{A_3}} \ldots \xi \ind*{^{A_k)}} \, ,
\end{align*}
for some $\psi \ind{^{A}} \in \mfS^{\frac{m-2}{2}}$. Since the first term on the RHS lies in $\mfS_{\frac{m-4}{2}}$, we must have $k \, \Gamma \ind{^{BC}} = 2 \, \Gamma \ind{^{B:C}}$, i.e.\ \eqref{eq-Nperp-int-zrm2} holds. When $k=2$, \eqref{eq-Nperp-int-zrm2} reduces to \eqref{eq-Nperp-int}.

For the converse when $k=2$, we follow the geometrical proof given in \cites{Eastwood1995,Mason1995}. Suppose that $\xi^{A}$ satisfies \eqref{eq-Nperp-int}, i.e.\ $\mcN_\xi$ is co-integrable. Then, locally, $\mcM$ is fibered over the leaf space $\mcL$ of $\mcN_\xi^\perp$. Choose a holomorphic section $\phi$ of the tautological line bundle $\wedge^m \Tgt^* \mcL$ of $\mcL$. Then, $\phi$ is clearly closed. Its pull-back to $\mcM$ must be orthogonal to each leaf of the foliation, i.e.\ it must be of the form $\phi^{A B} := \phi \ind{_{a_1 \ldots a_m}} \gamma \ind{^{a_1 \ldots a_m}^{AB}} = \ee^\psi \xi \ind*{^A} \xi \ind*{^B}$ for some holomorphic function $\psi$. Further, since the exterior derivative commutes with the pull-back, $\phi$ is also closed, i.e.\ $\phi^{AB}$ satisfy \eqref{eq-zrm}. 

Finally, in both cases, adding any holomorphic function constant along the leaves of $\mcN_\xi^\perp$ to $\psi$, i.e.\ annihilated by $\xi^{aA} \nabla_a$, leaves the relevant field equations unchanged.
\end{proof}

\begin{thm}[Robinson theorem for co-zrm fields]\label{thm-Robinson-co-zrm}
 Let $\xi^A$ be a holomorphic pure spinor field on $(\mcM,g)$ with almost null structure $\mcN_\xi$. Let $\psi$ be a holomorphic function and suppose that $\phi^{A_1 A_2 \ldots A_k} := \ee^\psi \xi^{A_1} \xi^{A_2} \ldots \xi^{A_k}$ satisfies the co-zrm field equation \eqref{eq-co-zrm}. Then locally $\xi^A$ satisfies \eqref{eq-cond-21}, i.e.\ $\mcN_\xi$ is integrable. Further, when $k>2$, $\xi^A$ satisfies \eqref{eq-N-geodetic}, i.e. $\mcN_\xi$ is totally geodetic.

Suppose that $\xi^{A}$ satisfies \eqref{eq-cond-21}, i.e. $\mcN_\xi$ is integrable. Then locally there exists a holomorphic function $\psi$ such that the pure spinor field $\phi^{A B} = \ee^\psi \xi \ind*{^A} \xi \ind*{^B}$ satisfies \eqref{eq-co-zrm}. Further, if $\xi^A$ satisfies \eqref{eq-N-geodetic}, i.e. $\mcN_\xi$ is totally geodetic, and the curvature condition \eqref{eq-simple-zrm-int-cond}, then locally, for every $k>2$, there exists a holomorphic function $\psi$ such that the spinor field $\phi^{A_1 A_2 \ldots A_k}=\ee^\psi \xi^{A_1} \xi^{A_2} \ldots \xi^{A_k}$ satisfies \eqref{eq-co-zrm}. In both cases, there is the freedom of adding to $\psi$ a holomorphic function constant along the leaves of $\mcN_\xi$.
\end{thm}

\begin{proof}
For $k \geq 2$, if $\phi \ind{^{A_1 \ldots A_k}}$ satisfies \eqref{eq-co-zrm}, then equation \eqref{eq-master} becomes
\begin{multline}\label{eq-master-co-zrm}
 0 = \xi \ind*{^{A_3}} \ldots \xi \ind*{^{A_k}} \xi \ind*{^{[A_2}}\xi \ind*{^a^{A_1]}} \nabla \ind{_a} \psi + \left( \xi \ind*{^a^{[A_1}} \nabla \ind{_a} \xi \ind*{^{A_2]}} \right) \xi \ind*{^{A_3}} \ldots \xi \ind*{^{A_k}} \\
+ (k-2) \left( \xi \ind*{^{[A_2}} \xi \ind*{^a^{A_1]}} \nabla \ind{_a} \xi \ind*{^{(A_3}} \right) \xi \ind*{^{A_4}} \ldots \xi \ind*{^{A_k)}} +  \left( \nabla \ind{_a} \xi \ind*{^a^{[A_1}} \right) \xi \ind*{^{A_2]}} \xi \ind*{^{A_3}} \ldots \xi \ind*{^{A_k}} \, .
\end{multline}
Then, tensoring with $\xi \ind*{^B}$ and skewing over $A_1 A_2 B$ yield \eqref{eq-cond-21}, i.e. $\mcN_\xi$ is integrable. When $k>2$, one can also tensor with $\xi \ind*{^B}$ and skew over $A_3 B$, and conclude  \eqref{eq-N-geodetic}, i.e. $\mcN_\xi$ is totally geodetic.

For the converse, the case $k=2$ is similar to the proof of Theorem \ref{thm-Robinson-zrm} except that one obtains a closed $(m+1)$-form, which is Hodge dual to a co-closed $m$-form. So we focus on the case $k>2$ and assume that condition \eqref{eq-N-geodetic} holds. This is equivalent to
\begin{align}\label{eq-Robinson-co-zrm-proof}
\xi \ind*{^a^A} \nabla \ind{_a} \xi \ind*{^B} & = \xi \ind*{^A} A \ind{^B} + \xi \ind*{^A} B \ind{^B} + C \, \xi \ind*{^A} \xi \ind*{^B} + D \ind{^A} \xi \ind*{^B} \, , &
\nabla \ind{_a} \xi \ind*{^a^A} & = E \, \xi \ind*{^A} + F \ind{^A} - B \ind{^A} + A \ind{^B} + C \, \xi \ind*{^A} + D \ind{^A} \, ,
\end{align}
for some functions $C$, $E$, spinors $B \ind{^A}$, $D \ind{^A}$, $F \ind{^A}$ in $\mfS^{\frac{m-2}{2}} = \im \xi_a^A$, and $A \ind{^A}$ in $\mfS^{\frac{m-4}{2}} \im \xi_{ab}^A$. We want to show that locally there exists a holomorphic function $\psi$ such that \eqref{eq-master-co-zrm} holds, i.e.\
\begin{align}\label{eq-Robinson-co-zrm-psi}
\xi \ind*{^{[A}} \xi \ind*{^a^{B]}} \nabla \ind{_a} \psi & = \xi \ind*{^{a[A}} \nabla \ind{_a} \xi \ind*{^{B]}} + \left( \nabla \ind{_a} \xi \ind*{^a^{[A}} \right) \xi \ind*{^{B]}} - (k-2) \, \xi \ind*{^{[A}} D \ind{^{B]}} 
= \xi \ind*{^{[A}} \left( 2 \, B \ind{^{B]}} - k D \ind{^{B]}} - F \ind{^{B]}} \right) =: \xi \ind*{^{[A}} \psi \ind{^{B]}} \, .
\end{align}
Differentiating the above equation with respect to $\xi^{[A} \xi \ind*{^a^{B]}} \nabla_a$, i.e.\ along $\mcN_\xi$, yields the integrability condition
\begin{align}\label{eq-Robinson-co-zrm-intcond}
\xi \ind*{^{[A}} D \ind{^{B}} \psi \ind{^{C]}} & = \xi \ind*{^{[A}} \xi \ind*{^a^B} \nabla \ind{_a} \psi \ind{^{C]}} \, .
\end{align}
We expand the RHS of \eqref{eq-Robinson-co-zrm-intcond} using the expression \eqref{eq-Robinson-co-zrm-psi} for $\psi^A$:
\begin{align*}
\xi \ind*{^{[A}} \xi \ind*{^a^B} \nabla \ind{_a} \psi \ind{^{C]}} & =  - \xi \ind*{^a^{[A}} \nabla \ind{_a} \left( \xi \ind*{^{B}} \psi \ind{^{C]}} \right) + \left( \xi \ind*{^a^{[A}} \nabla \ind{_a} \xi \ind*{^B} \right) \psi \ind{^{C]}}
\\
& = - \xi ^{a[A} \nabla_a \left( \xi^{bB} \nabla_b \xi^{C]} \right) - \xi^{a[A} \nabla_a \left( \left( \nabla_b \xi^{bB} \right) \xi^{C]} \right) + ( k - 2) \xi \ind*{^a^{[A}} \nabla \ind{_a} \left( \xi \ind*{^B} D \ind{^{C]}} \right) + \left( \xi \ind*{^a^{[A}} \nabla \ind{_a} \xi \ind*{^B} \right) \psi \ind{^{C]}} \, .
\end{align*}
We compute each term in turn using the assumption \eqref{eq-simple-zrm-int-cond}. For the third term, we find
\begin{align*}
\xi \ind*{^a^{[A}} \nabla \ind{_a} \left( \xi \ind*{^B} D \ind{^{C]}} \right) \xi \ind*{^D} & = \xi \ind*{^a^{[A}} \nabla \ind{_a} \left( \xi \ind*{^B} D \ind{^{C]}} \xi \ind*{^D} \right) - \left( \xi \ind*{^a^{[A|}} \nabla \ind{_a} \xi \ind*{^D} \right) \xi \ind*{^{|B}} D \ind{^{C]}}  = \xi \ind*{^a^{[A}} \nabla \ind{_a} \left( \xi \ind*{^B} \xi \ind*{^b^{C]}} \nabla \ind{_b} \xi \ind*{^D} \right) \\
& = \left( \xi \ind*{^a^{[A}} \nabla \ind{_a} \xi \ind*{^B} \right) \left( \xi \ind*{^b^{C]}} \nabla \ind{_b} \xi \ind*{^D} \right) - \xi \ind*{^{[A}} \left( \xi \ind*{^a^{B}}  \nabla \ind{_a} \xi \ind*{^b^{C]}} \right) \nabla \ind{_b} \xi \ind*{^D} - \cancel{ \frac{1}{8} \xi \ind*{^{[A}} \xi \ind*{^a^{B}} \xi \ind*{^b^{C]}} C \ind{_{abcd}} \xi \ind*{^{cd}^D} } \, .
\end{align*}
For the second term, we have
\begin{align*}
\xi^{a[A} \nabla_a \left( \left( \nabla_b \xi^{bB} \right) \xi^{C]} \right) & = \left( \xi^{a[A} \nabla_a  \nabla_b \xi^{bB}\right) \xi^{C]} - \left( \xi^{a[A} \nabla_a \xi^B \right) \nabla_b \xi^{bC]} \\
& = \left( \xi^{a[A} \nabla_b  \nabla_a \xi^{bB}\right) \xi^{C]} + \frac{1}{4} \xi^{a[A|} R_{abcd} \xi^{cdD} \gamma \ind{^b_D^{|B}} \xi^{C]}  - \left( \xi^{a[A} \nabla_a \xi^B \right) \nabla_b \xi^{bC]} \\
& = \nabla_b  \left( \xi^{a[A} \nabla_a \xi^{bB} \right) \xi^{C]} - \cancel{ \left( \nabla_b  \xi^{a[A} \right) \left( \nabla_a \xi^{bB}\right) \xi^{C]} } - \cancel{ \frac{1}{2} \xi^{a[A} R_{ab} \xi^B \xi^{C]} }  - \left( \xi^{a[A} \nabla_a \xi^B \right) \nabla_b \xi^{bC]} \\
& = \nabla_a \left( \left( \xi^{b[A} \nabla_b \xi^{aB} \right) \xi^{C]} \right) - \left( \xi^{a[A} \nabla_a  \xi^{bB} \right) \nabla_b \xi^{C]} - \left( \xi^{a[A} \nabla_a \xi^B \right) \nabla_b \xi^{bC]} 
\end{align*}
while the first term simply becomes
\begin{align*}
\xi ^{a[A} \nabla_a \left( \xi^{bB} \nabla_b \xi^{C]} \right) & = \left( \xi ^{a[A} \nabla_a \xi^{bB} \right)  \nabla_b \xi^{C]}  - \cancel{ \frac{1}{8} \xi^{a[A} \xi^{bB} C_{abcd} \xi^{cdC]} } \, ,
\end{align*}
The last step is to use  \eqref{eq-Robinson-co-zrm-proof} and \eqref{eq-Robinson-co-zrm-psi} to express the covariant derivative of $\xi^A$ in all these expressions in terms of $\xi^A$, $A^A$, $B^A$, $C$, $D^A$, $E$ and $F^A$. Thus, we get $\xi \ind*{^a^{[A}} \nabla \ind{_a} \left( \xi \ind*{^B} D \ind{^{C]}} \right)  = - \xi^{[A} D^B \left( A^{C]} + B^{C]} \right)$ and similarly for the other terms. Applying \eqref{eq-Robinson-co-zrm-psi} to the LHS of \eqref{eq-Robinson-co-zrm-intcond} reveals that \eqref{eq-Robinson-co-zrm-intcond} is indeed satisfied.

Finally, in both cases, adding any holomorphic function constant along the leaves of $\mcN_\xi$ to $\psi$, i.e.\ annihilated by $\xi^{[A} \xi^{aB]} \nabla_a$, leaves the relevant field equations unchanged.
\end{proof}

We omit the proof of the following theorem, which follows roughly the one given in \cite{Hughston1988}.
\begin{thm}[Non-conformally invariant Robinson theorem]\label{thm-Robinson-nc-zrm}
 Let $\xi^A$ be a holomorphic pure spinor field on $(\mcM,g)$ with almost null structure $\mcN_\xi$. Let $\psi$ be a holomorphic function and suppose that $\phi^{A_1 A_2 \ldots A_k} := \ee^\psi \xi^{A_1} \xi^{A_2} \ldots \xi^{A_k}$ is both a zrm field and a co-zrm field, i.e.\ $\phi^{A_1 A_2 \ldots A_k}$ satisfies \eqref{eq-nc-zrm}. Then locally $\xi^A$ satisfies \eqref{eq-cond-12}, i.e.\ $\mcN_\xi$ is totally co-geodetic.

Suppose that $\xi^{A}$ satisfies \eqref{eq-cond-12}, i.e.\ $\mcN_\xi$ is totally co-geodetic. Then locally there exists a holomorphic function $\psi$ such that $\phi^{AB} := \ee^\psi \xi^A \xi^B$ satisfies \eqref{eq-nc-zrm}. Suppose further that $\xi^A$ satisfies the curvature conditions \eqref{eq-simple-zrm-int-cond} and \eqref{eq-simple-zrm-int-cond-xtra}. Then, for every $k>2$, there exists a holomorphic function $\psi$ such that $\phi^{A_1 A_2 \ldots A_k} := \ee^\psi \xi^{A_1} \xi^{A_2} \ldots \xi^{A_k}$ satisfies \eqref{eq-nc-zrm}. In both cases, there is the freedom of adding to $\psi$ a holomorphic function constant along the leaves of $\mcN^\perp_\xi$.
\end{thm}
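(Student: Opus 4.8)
The plan is to prove the two implications separately, leaning throughout on the two Robinson theorems just established and on Proposition~\ref{prop-intorsion-class}.

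\emph{Forward direction.} The key observation is that \eqref{eq-nc-zrm} is exactly the conjunction of its symmetric part \eqref{eq-zrm} and its part skew in two free indices \eqref{eq-co-zrm}: the expression $\gamma\ind{^a_B^{A_1}}\nabla_a\phi\ind{^{A_2\ldots A_k B}}$ decomposes, in its free spinor indices, into the totally symmetric piece and the piece skew in $A_1A_2$, and \eqref{eq-nc-zrm} asserts that both vanish. Hence a null solution $\phi\ind{^{A_1\ldots A_k}}=\ee^\psi\xi\ind{^{A_1}}\cdots\xi\ind{^{A_k}}$ of \eqref{eq-nc-zrm} is simultaneously a null zrm field and a null co-zrm field. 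I would then apply the forward implication of Theorem~\ref{thm-Robinson-zrm}, which gives \eqref{eq-Nperp-int-zrm}, i.e. $\Gamma\ind{^{(A:B)}}=0$ and $k\,\Gamma\ind{^{BC}}=2\,\Gamma\ind{^{B:C}}$ in the notation of \eqref{eq-connection1form}, and the forward implication of Theorem~\ref{thm-Robinson-co-zrm}, which gives \eqref{eq-cond-21}, i.e. $\Gamma\ind{^{ABC}}=\Gamma\ind{^{[A:B]}}=0$, together with \eqref{eq-co-zrm>2-int} when $k>2$. Combining, $\Gamma\ind{^{A:B}}=0$, hence $\Gamma\ind{^{BC}}=0$, and $\Gamma\ind{^{ABC}}=0$. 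Feeding these vanishings back into the master identity \eqref{eq-master} taken with vanishing left-hand side and without symmetrisation should isolate the one remaining component in $\breve{\mfW}_{-1}^2$, namely $\Gamma\ind{^A}$, and force it to vanish too; thus the intrinsic torsion lies in $\mfW^0$, i.e. ${}^\mfW_\xi\Pi_{-1}^2(\Gamma)=0$, which by Proposition~\ref{prop-intorsion-class} is \eqref{eq-cond-12}, so $\xi^A$ is co-geodetic.

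\emph{Converse, $k=2$.} Here I would follow the differential-form argument of \cite{Eastwood1995} used in the $k=2$ parts of Theorems~\ref{thm-Robinson-zrm} and \ref{thm-Robinson-co-zrm}. If $\xi^A$ satisfies \eqref{eq-cond-12}, then by Proposition~\ref{prop-foliating-spinor} both $\mcN$ and $\mcN^\perp$ are integrable and geodetic, so locally $\mcM$ fibres over the $m$-dimensional leaf space of $\mcN^\perp$ and over the $(m+1)$-dimensional leaf space of $\mcN$. Writing the associated null $m$-form as $\phi=\ee^\psi\phi_0$ with $\phi_0$ a fixed local null $m$-form annihilating $\mcN^\perp$, Frobenius applied to $\mcN^\perp$ gives $\dd\phi_0=\theta\wedge\phi_0$ and applied to $\mcN$ gives $\dd(\ast\phi_0)=\sigma\wedge(\ast\phi_0)$ for some $1$-forms $\theta$, $\sigma$. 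Then $\phi$ is closed iff $\dd\psi|_{\mcN^\perp}=-\theta|_{\mcN^\perp}$, and $\ast\phi$ is closed iff $\dd\psi|_{\mcN}=-\sigma|_{\mcN}$; since $\mcN\subset\mcN^\perp$ these two requirements are compatible precisely when $\theta|_{\mcN}=\sigma|_{\mcN}$, which I expect to be exactly the geodetic content of \eqref{eq-cond-12} over and above the mere integrability of $\mcN$ and $\mcN^\perp$. One then solves the resulting closed first-order system for $\psi$ leaf by leaf, producing a $\phi^{AB}=\ee^\psi\xi\ind{^A}\xi\ind{^B}$ that is both closed and co-closed, i.e. satisfies \eqref{eq-nc-zrm}; the residual freedom is the addition to $\psi$ of a holomorphic function constant along the $(m+1)$-dimensional leaves of $\mcN^\perp$.

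\emph{Converse, $k>2$, and the main obstacle.} For $k>2$ condition \eqref{eq-cond-12} alone is insufficient, and I would mimic the $k>2$ part of the proof of Theorem~\ref{thm-Robinson-co-zrm}: using \eqref{eq-cond-12} one brings $\xi\ind{^{aA}}\nabla_a\xi\ind{^B}$ and $\nabla_a\xi\ind{^{aA}}$ into a normal form analogous to \eqref{eq-Robinson-co-zrm-proof}, reduces \eqref{eq-master} with vanishing left-hand side to a first-order linear equation for $\psi$, differentiates a second time and commutes covariant derivatives to get an integrability condition, and then uses the curvature identities already displayed in the proof of Proposition~\ref{prop-recurrent-spinor} together with those behind Proposition~\ref{prop-int-zrm} to see that this condition reduces to \eqref{eq-simple-zrm-int-cond} and \eqref{eq-simple-zrm-int-cond-xtra}. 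Granting those, the system is integrable and a suitable $\psi$ exists for each $k>2$, with the same residual freedom. The hard part is precisely this converse: for $k=2$, establishing $\theta|_{\mcN}=\sigma|_{\mcN}$ — equivalently, that the recurrence $1$-forms of $\phi$ and of $\ast\phi$ agree on $\mcN$ exactly when $\xi^A$ is co-geodetic — and, for $k>2$, carrying out the lengthy second-order integrability computation and matching it on the nose to the two stated curvature conditions. Both are elaborations of computations already performed for Theorems~\ref{thm-Robinson-zrm} and \ref{thm-Robinson-co-zrm}, which is presumably why the authors omit the details; the forward direction, by contrast, is essentially bookkeeping once the two earlier theorems and Proposition~\ref{prop-intorsion-class} are in hand.
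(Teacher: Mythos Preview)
The paper does not actually give a proof of this theorem: it is stated with the remark that the argument is omitted and ``follows roughly the one given in \cite{Hughston1988}''. So there is no proof in the paper to compare your proposal against; what I can do is assess your sketch on its own terms.

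Your forward direction is correct but more circuitous than necessary. Rather than routing through Theorems~\ref{thm-Robinson-zrm} and~\ref{thm-Robinson-co-zrm} and then having to fish out the last component $\Gamma^A$, you can read off \eqref{eq-cond-12} in one step from the unsymmetrised master identity \eqref{eq-master} with vanishing left-hand side: tensor with $\xi^B$ and skew over $A_2$ and $B$. Since $\xi^{[A}\xi^{B]}=0$, the first and third terms on the right of \eqref{eq-master} die, and in the symmetrised middle term only the summand with $\nabla_a\xi^{A_2}$ survives, yielding $\bigl(\xi^{aA_1}\nabla_a\xi^{[A_2}\bigr)\xi^{B]}=0$, i.e.\ \eqref{eq-cond-12}. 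Your detour does work, but the step ``feeding these vanishings back \ldots\ should isolate $\Gamma^A$'' is exactly this computation in disguise, and it is cleaner to do it directly.

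Your converse sketches are in the right spirit and mirror the structure of the proofs of Theorems~\ref{thm-Robinson-zrm} and~\ref{thm-Robinson-co-zrm}; this is presumably what the authors intend by their reference to \cite{Hughston1988}. The one substantive point you flag---that for $k=2$ the compatibility of the two first-order conditions on $\psi$ along $\mcN$ is precisely the co-geodetic content of \eqref{eq-cond-12} beyond mere integrability---is the crux and would need to be verified, but your identification of it is correct. For $k>2$, your plan to mimic the second-order integrability analysis of Theorem~\ref{thm-Robinson-co-zrm}, now picking up the extra Ricci term and hence the condition \eqref{eq-simple-zrm-int-cond-xtra}, is exactly how the argument should go.
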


\begin{rem}
In flat even-dimensional space, the Robinson theorem is often used in conjunction with the \emph{Kerr theorem} \cites{Kerr2009,Penrose1967,Hughston1988}, by means of which one (locally) generates null structures in terms of geometric data in a `twistor space'. It is interesting to note that one can also distinguish three odd-dimensional counterparts of the Kerr theorem as presented in \cite{Taghavi-Chabert2017} depending the various `degrees' of integrability of an almost null structure.
\end{rem}

\subsubsection{Conformal Killing spinor}
Complementary to \eqref{eq-Weyl-Dirac}, one defines the \emph{twistor equation}
\begin{align}\label{eq-twistor-spinor}
 \nabla_a \xi^{A} + \frac{1}{\sqrt{2}} \gamma \ind{_a _B^{A}} \zeta^B & = 0 \, ,
\end{align}
for any holomorphic spinor field $\xi^A$. Here, \eqref{eq-twistor-spinor} determines $\zeta^B = \frac{\sqrt{2}}{n} \gamma \ind{^a _{A}^B} \nabla_a \xi^{A}$. A solution $\xi^A$ will be referred to as a \emph{conformal Killing spinor} or \emph{twistor-spinor}. The spinor field $\zeta^A$ can be shown to satisfy
\begin{align}\label{eq-twistor-spinor2}
\nabla \ind{_a} \zeta \ind*{^B} + \frac{1}{\sqrt{2}} \Rho \ind{_{ab}} \gamma \ind{^b_A^B} \xi \ind*{^A} & = 0 \, ,
\end{align}
where $\Rho_{ab} := \frac{1}{2-n} \Phi_{ab} - R \frac{1}{2n(n-1)} g_{ab}$ is the \emph{Rho} or \emph{Schouten} tensor (see Appendix \ref{sec-conformal}). Equations \eqref{eq-twistor-spinor} and \eqref{eq-twistor-spinor2} are conformally invariant provided that $\xi^A$ and $\zeta^A$ transform as
\begin{align}\label{eq-twistor-spinor-transform}
\xi \ind*{^A} & \mapsto \hat{\xi} \ind{^A} = \xi \ind*{^A} \, , & \zeta \ind*{^A} & \mapsto \hat{\zeta} \ind{^A} = \Omega^{-1} \left( \zeta \ind*{^A} + \frac{1}{\sqrt{2}} \Upsilon \ind{_a} \xi \ind*{^a^A} \right) \, .
\end{align}
The equivalence class of pairs of spinors $(\xi^A,\zeta^A) \sim (\hat{\xi}^A,\hat{\zeta}^A)$ related by \eqref{eq-twistor-spinor-transform} can be thought of as a section $(\xi^A,\zeta^A)$ of the \emph{local twistor bundle} \cites{Penrose1986,Bailey1994} or \emph{spin tractor bundle} \cite{Hammerl2011}, and we shall refer to such a section as a \emph{tractor-spinor}. These are spinors for the group $\Spin(2m+3,\C)$. Tracing \eqref{eq-twistor-spinor2} yields
\begin{align}\label{eq-twistor-spinor3}
\nabla \ind{_a} \zeta \ind*{^a^B} & = - \frac{1}{2\sqrt{2}(n-1)} R \xi \ind*{^B} \, .
\end{align}
The integrability condition for the existence of a conformal Killing spinor is well-known, see e.g. \cite{Baum2010}. Here, we restate it in the context of pure spinor fields.
\begin{prop} \label{prop-int-cond-twistor-spinor} Let $\xi^A$ be a pure conformal Killing spinor on $(\mcM,g)$ with $\zeta^B := \frac{\sqrt{2}}{n} \gamma \ind{^a _{A}^B} \nabla_a \xi^{A}$. Then
\begin{align}\label{eq-twistor-spinor-int_cond}
\begin{aligned}
  C \ind{_{abcd}} \xi \ind*{^{cd}^D} & = 0 \, , \qquad \mbox{i.e.} \qquad {}^\mfC _\xi \Pi_2^0 (C) = 0 \, , \\
  C \ind{_{abcd}} \zeta \ind*{^{bcC}} - 2 \sqrt{2} A \ind{_{cab}} \xi \ind*{^c^E} & = 0 \, , \\
 A \ind{_{cab}} \xi \ind*{^c^A} \xi \ind*{^{ab}^B} & = 0 \, , \qquad \mbox{i.e.} \qquad {}^\mfA _\xi \Pi_0^0 (A) = 0 \, , \\
\end{aligned}
\end{align}
where $A \ind{_{abc}} := 2 \nabla_{[b} \Rho_{c]a}$ is the \emph{Cotton-York tensor} (see appendix \ref{sec-conformal}).
\end{prop}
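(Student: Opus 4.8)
The plan is to read off both relations from the standard prolongation of the twistor equation \eqref{eq-twistor-spinor}: the pair $(\xi^A,\zeta^A)$ is a parallel section of the spin tractor bundle, hence annihilated by the curvature of the tractor connection, which is built from the Weyl and Cotton--York tensors (the classical integrability condition for twistor-spinors, see \cite{Baum2010}; for the tractor formulation see \cite{Hammerl2011}). Proposition \ref{prop-int-cond-twistor-spinor} just records what this says once $\xi^A$ is assumed pure. I would establish it directly by differentiating \eqref{eq-twistor-spinor} and \eqref{eq-twistor-spinor2} a second time and commuting covariant derivatives.

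For the first relation, apply $\nabla_b$ to \eqref{eq-twistor-spinor} and antisymmetrise over $a$ and $b$. Since the Levi-Civita connection preserves the Clifford module structure the $\gamma$-matrices pass through the derivative, and substituting \eqref{eq-twistor-spinor2} for $\nabla\zeta$ produces an expression for $2\nabla_{[a}\nabla_{b]}\xi^A$ that is linear in the Schouten tensor $\Rho$ and in $\xi$ and quadratic in the $\gamma$-matrices. Comparing with the spinorial Ricci identity $2\nabla_{[a}\nabla_{b]}\xi^A = -\tfrac14 R_{abcd}\gamma\ind{^{cd}_B^A}\xi^B$ and reducing the product of two $\gamma$-matrices via \eqref{eq-red_Clifford_property} together with the definition \eqref{eq-skew-gamma} of the skew $\gamma$-matrices (so that only a single $\gamma_{cd}$ and a trace term survive), one obtains an identity of the shape $R_{abcd}\gamma\ind{^{cd}_B^A}\xi^B = (\text{an explicit term linear in }\Rho\text{ and }\xi)$. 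Finally substitute the decomposition \eqref{eq-Riem_decomposition} of $R_{abcd}$ together with $\Rho_{ab} = \tfrac{1}{2-n}\Phi_{ab} - \tfrac{R}{2n(n-1)}g_{ab}$ from appendix \ref{sec-conformal}: a short computation shows that the Schouten part of $R_{abcd}\gamma^{cd}$ is exactly that explicit term, so everything cancels apart from $C_{abcd}\gamma\ind{^{cd}_E^D}\xi^E = 0$, i.e. $C_{abcd}\xi^{cdD}=0$. As $\xi^A$ is pure, Proposition \ref{prop-Weyl-classification} identifies this with ${}^\mfC _\xi \Pi_2^0(C)=0$.

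For the second relation I repeat the computation with \eqref{eq-twistor-spinor2} in place of \eqref{eq-twistor-spinor}. Differentiating $\nabla_a\zeta^B = -\tfrac{n}{2}\Rho_{ab}\gamma\ind{^b_A^B}\xi^A$ and antisymmetrising, the derivative of $\Rho$ contributes the Cotton--York tensor through $2\nabla_{[a}\Rho_{b]c}=A_{cab}$, while $\nabla\xi^A$ is re-expressed through \eqref{eq-twistor-spinor}. Using the Ricci identity $2\nabla_{[a}\nabla_{b]}\zeta^B = -\tfrac14 R_{abcd}\gamma\ind{^{cd}_E^B}\zeta^E$, the same Clifford reduction, and then \eqref{eq-Riem_decomposition}, the Schouten terms cancel exactly as before (literally the previous cancellation, now carrying $\zeta$ instead of $\xi$), leaving $C_{abcd}\gamma\ind{^{cd}_E^B}\zeta^E = 2n\,A_{cab}\gamma\ind{^c_A^B}\xi^A$. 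A final rearrangement using the first Bianchi identity and the tracelessness of $C_{abcd}$ — which relate the contraction of $C$ with a skew $2$-spinor over its second index pair to the contraction over a mixed pair, up to a universal constant — puts this into the form of the second relation displayed in \eqref{eq-twistor-spinor-int_cond}.

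The genuine work is concentrated in two bookkeeping steps. The first is checking that the $\Rho$-contribution generated by \eqref{eq-twistor-spinor2} in the prolongation exactly cancels the Schouten part of $R_{abcd}\gamma^{cd}$ coming from the Ricci identity; this is where the precise normalisation of $\Rho$ relative to \eqref{eq-Riem_decomposition} is needed, and it is the step most susceptible to sign slips. The second is the Weyl-symmetry manipulation in the last line of the second relation. Purity of $\xi^A$ is not used anywhere in the derivation itself; it enters only through Proposition \ref{prop-Weyl-classification}, to rephrase $C_{abcd}\xi^{cdD}=0$ as the $\prb$-invariant condition ${}^\mfC _\xi \Pi_2^0(C)=0$.
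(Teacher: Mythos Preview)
Your approach is correct and is precisely the standard derivation: the paper does not supply its own proof here but cites the result as well-known from \cite{Baum2010}, so your prolongation argument (differentiate \eqref{eq-twistor-spinor} and \eqref{eq-twistor-spinor2}, commute, and cancel the Schouten contributions against the non-Weyl part of $R_{abcd}$) is exactly what that reference does. Your closing observation that purity of $\xi^A$ plays no role in the derivation and only enters in rephrasing $C_{abcd}\xi^{cdD}=0$ as ${}^\mfC _\xi \Pi_2^0(C)=0$ is also correct and worth keeping.
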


\begin{prop}\label{prop-conformal-Killing-foliating}
 Let $\xi^{A}$ be a pure conformal Killing spinor on $(\mcM,g)$ with almost null structure $\mcN_\xi$. Set $\zeta^B := \frac{\sqrt{2}}{n} \gamma \ind{^a _{A}^B} \nabla_a \xi^{A}$. Then $\xi^{A}$ satisfies \eqref{eq-cond-22}, i.e.\
 \begin{align*}
  \xi \ind*{^{[A}} \left( \xi \ind*{^{aB]}} \nabla \ind{_a} \xi \ind*{_b^E}  \right) \xi \ind*{^b^{[C}} \xi \ind*{^{D]}} + \xi \ind*{^{[C}} \left( \xi \ind*{^{aD]}} \nabla \ind{_a} \xi \ind*{_b^E} \right) \xi \ind*{^b^{[A}} \xi^{B]} &  = 0 \, .
  \end{align*}

Further, $\xi^A$ satisfies \eqref{eq-foliating}, i.e. $\mcN_\xi$ is integrable and co-integrable, if and only if
\begin{align}\label{eq-pure-twistor-spinor}
  \zeta \ind*{^a^A} \zeta \ind*{_a^B} & = - \zeta^A \zeta^B  \, , & \zeta \ind*{^a^A} \xi \ind*{_a^B} & = \zeta \ind*{^A} \xi \ind*{^B} - 2 \, \xi \ind*{^A} \zeta \ind*{^B} \, ,
\end{align}
i.e. $\zeta^A$, if non-zero, is pure and its almost null structure $\mcN_\zeta$ intersects $\mcN_\xi$ in a totally null plane of dimension $m-1$ or $m$ at every point.

Suppose that $\xi^A$ satisfies \eqref{eq-foliating} so that $\zeta^A$ satisfies \eqref{eq-pure-twistor-spinor}. Then
\begin{align}\label{eq-twistor-spinor2-almost_foliating}
\left( \zeta \ind*{^a^{[A}} \nabla \ind{_a} \zeta \ind*{^b^B} \right) \zeta \ind*{_b^C} \zeta \ind*{^{D]}} & = 0 \, .
\end{align}
\end{prop}

\begin{proof}
To prove that $\xi^A$ satisfies \eqref{eq-cond-22}, it suffices to contract equation \eqref{eq-twistor-spinor} with $\xi \ind*{^a^A}$ and $\gamma \ind{^b_D^C} \xi \ind*{_b^A}$. We find
\begin{align*}
\left( \xi \ind*{^a^A} \nabla \ind{_a} \xi \ind*{^b^B} \right) \xi \ind*{_b^C} + \frac{1}{\sqrt{2}} \left( \xi \ind*{^{aA}} \zeta \ind*{_{ab}^B} \xi \ind*{^b^C} + \zeta \ind*{^B} \xi \ind*{^A} \xi \ind*{^C} \right) & = 0 \, .
\end{align*}
The second term is skew-symmetric in $AC$. Therefore, symmetrising over $AC$ yields \eqref{eq-cond-22}.

Next, suppose that $\xi^A$ satisfies \eqref{eq-foliating}, which is equivalent to
\begin{align*}
\xi^{aA} \nabla_a \xi^B = - \frac{1}{\sqrt{2}} \left( \xi^A \alpha^B + \beta^A \xi^B \right) \in \left( \mfS^{\frac{m}{2}} \otimes \mfS^{\frac{m-2}{2}} \right) \oplus \left( \mfS^{\frac{m-2}{2}} \otimes \mfS^{\frac{m}{2}} \right)
\end{align*}
at every point -- here $\mfS^{\frac{m}{2}} = \langle \xi^A \rangle$ and $\mfS^{\frac{m-2}{2}} = \im \xi_a^A$. By \eqref{eq-twistor-spinor}, the LHS is $-\frac{1}{\sqrt{2}} \xi^{aA} \zeta_a^B$ and must lie in the same module as the RHS. This in particular means that $\xi^A$ and $\zeta^A$ must satisfy \eqref{eq-pure-twistor-spinor} -- checking that indeed $\alpha^A = - \frac{1}{2} \beta^A = \zeta^A$ can be done by aplying \eqref{prop-pure-intersect2}. The converse, that \eqref{eq-pure-twistor-spinor} implies \eqref{eq-foliating}, is immediate.

Finally, assume $\xi^A$ satisfies \eqref{eq-foliating} so that \eqref{eq-pure-twistor-spinor} holds. Contracting equation \eqref{eq-twistor-spinor2} with $\zeta \ind*{^a^A}$ and $\gamma \ind{^b_D^C} \zeta \ind*{_b^A}$ leads to
\begin{align*}
\left( \zeta \ind*{^a^A} \nabla \ind{_a} \zeta \ind*{^b^B} \right) \zeta \ind*{_b^C} - \frac{1}{\sqrt{2}} \zeta \ind*{^a^A} \Rho \ind{_{ab}} \xi \ind*{^b^B} \zeta \ind*{^C} + 2 \sqrt{2} \, \zeta \ind*{^a^A} \Rho \ind{_{ab}} \zeta \ind*{^b^{[B}} \xi \ind*{^{C]}} & = 0 \, ,
\end{align*}
and the result \eqref{eq-twistor-spinor2-almost_foliating} follows by symmetry considerations.
\end{proof}

\begin{rem}
Using \eqref{eq-twistor-spinor-transform}, one checks that the statements of Proposition \ref{prop-conformal-Killing-foliating} are conformally invariant.

Further, the condition that the conformal Killing spinor $\xi^A$ be pure and $\zeta^A$ satisfy \eqref{eq-pure-twistor-spinor} is equivalent to the corresponding tractor-spinor $(\xi^A, \zeta^A)$ being a pure section of the local twistor bundle, i.e.\ it is a pure spinor for $\Spin(2m + 3, \C)$. See \cites{Hughston1988,Taghavi-Chabert2017}.
\end{rem}

\begin{exa}\label{exa-235}
Using the method of equivalence, Cartan \cite{Cartan1910} showed how to encode the invariance properties of certain
ODEs of Monge type in terms of a $(2, 3, 5)$-distribution, i.e. a rank-2 distribution $\mcN$ on a five-dimensional
smooth manifold, that bracket-generates the tangent bundle. This is more invariantly expressed as a $\G_2$-principal bundle equipped with a Cartan connection. In \cite{Nurowski2005}, Nurowski associates to this $(2, 3, 5)$-distribution a five-dimensional split-signature conformal structure, with respect to which $\mcN$ is totally null, with orthogonal complement $[\mcN , \mcN ]$. The general theory, expounded in the language of parabolic geometries, is given in  \cites{Cap2009,Hammerl2009}, more particularly, in \cite{Hammerl2011}, where it is shown how such manifolds are characterised by the existence of a real conformal Killing spinor, generic in the sense that $\xi^A \zeta_A \neq 0$. In five dimensions, this is consistent since \eqref{eq-cond-22} implies \eqref{eq-almost-foliating}. This example works equally in the holomorphic category.
\end{exa}

\paragraph{Killing spinors}
A holomorphic spinor field $\xi^A$ that is both a solution to the twistor equation \eqref{eq-twistor-spinor} and an eigenspinor of the Dirac operator, i.e.\ $\gamma \ind{^a_B^C} \nabla_a \xi^B = \lambda \, \xi^C$ for some holomorphic function $\lambda$ on $\mcM$, is known as a \emph{Killing spinor}. Otherwise put, $\xi^A$ satisfies the \emph{Killing equation}
\begin{align}\label{eq-Killing_spinor}
 \nabla \ind{_a} \xi \ind*{^A} + \lambda \frac{1}{n} \xi \ind*{_a^A} & = 0 \, .
\end{align}
That this equation is not conformally invariant is reflected in the geometric properties of its solutions. In particular, as a special case of \eqref{eq-twistor-spinor}, \eqref{eq-twistor-spinor2}, \eqref{eq-twistor-spinor3} and \eqref{eq-twistor-spinor-int_cond} with $\zeta \ind*{^A} = \lambda \, \xi \ind*{^A}$, we prove:
\begin{prop}\label{prop-int-cond-Killing-spinor}
 Let $\xi^A$ be a pure Killing spinor on $(\mcM,g)$ with almost null structure $\mcN_\xi$. Then 
\begin{align*}
{}^\mfC _\xi \Pi_2^0 (C) & = 0 \, , & & \mbox{i.e.} &   C \ind{_{abcd}} \xi \ind*{^{cd}^D} & = 0 \, , \\
{}^\mfA _\xi \Pi_2^0 (A) = {}^\mfA _\xi \Pi_2^1 (A) & = 0 & & \mbox{i.e.} & \xi \ind*{^a^A} A \ind{_{abc}} & = 0 \, , \\
{}^\mfF _\xi \Pi_0^1 ( \Phi ) & = 0 \, .
\end{align*}
Further, its eigenfunction $\lambda$ satisfies $\xi \ind*{^a^A} \nabla \ind{_a} \lambda = - \left( \lambda^2 + \frac{n}{4(n-1)} R \right) \xi \ind*{^A}$, and is thus constant along $\mcN_\xi$.
\end{prop}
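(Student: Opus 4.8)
The plan is to treat the Killing equation \eqref{eq-Killing_spinor} as a special case of the twistor equation \eqref{eq-twistor-spinor}, so that $\zeta^A = \gamma \ind{^a_B^A} \nabla_a \xi^B = \lambda \, \xi^A$ up to a universal constant: contracting \eqref{eq-Killing_spinor} with $\gamma \ind{^a_B^C}$ and using the Clifford property \eqref{eq-red_Clifford_property} gives $\zeta^C \propto \lambda \xi^C$. In particular $\zeta^A$ lies in $\langle \xi^A \rangle = \mfS^{\frac{m}{2}}$, the deepest filtrand of the spinor module. First I would feed this into Proposition \ref{prop-int-cond-twistor-spinor}: its first integrability condition ${}^\mfC _\xi \Pi_2^0(C) = 0$, i.e. $C \ind{_{abcd}} \xi \ind{^{cd}^D} = 0$, holds automatically for any pure conformal Killing spinor, hence for our $\xi^A$. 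For the Cotton--York condition, I would substitute $\zeta^{bcC} = \lambda \, \xi^{bcC}$ into the second equation of \eqref{eq-twistor-spinor-int_cond}; the term $C \ind{_{abcd}} \zeta \ind{^{bcC}} = \lambda \, C \ind{_{abcd}} \xi \ind{^{bcC}}$ vanishes by the first condition (up to the usual trace-adjusted projector identities from Proposition \ref{prop-Weyl-classification}), leaving $A \ind{_{cab}} \xi \ind{^c^E} = 0$, which is exactly ${}^\mfA _\xi \Pi_2^0(A) = {}^\mfA _\xi \Pi_2^1(A) = 0$ as described in Proposition \ref{prop-Cotton-York-classification}.

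Next I would extract ${}^\mfF _\xi \Pi_0^1(\Phi) = 0$. Since a Killing spinor is in particular a conformal Killing spinor with $\zeta^A \in \mfS^{\frac{m}{2}}$, Proposition \ref{prop-conformal-Killing-foliating} (and the remarks following it) tell us $\xi^A$ is foliating, and indeed more. The cleanest route is to substitute \eqref{eq-Killing_spinor} directly into the spinorial conditions of Proposition \ref{prop-intorsion-class}: because $\nabla_a \xi^A = -\frac{\lambda}{n} \xi \ind{_a^A}$, every contraction $\xi \ind{^{a\cdots}} \nabla_a \xi^{\cdots}$ reduces to a multiple of $\lambda$ times a $\gamma$-matrix contraction, which the purity conditions \eqref{eq-purity_cond}, \eqref{eq-purity_cond_Cartan} and Lemma \ref{lem-technical} collapse. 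In this way one checks ${}^\mfW _\xi \Pi_0^1(\Gamma) = 0$, i.e. condition \eqref{eq-cond01}, which together with the dimension-matching of $\prb$-irreducibles in the Ricci and Weyl decompositions (as in the proof of Proposition \ref{prop-recurrent-spinor}) yields ${}^\mfF _\xi \Pi_0^1(\Phi) = 0$. Alternatively, and perhaps more efficiently, one takes a second covariant derivative of \eqref{eq-Killing_spinor}, commutes, and uses $2 \nabla \ind{_{\lb{a}}} \nabla \ind{_{\rb{b}}} \xi^A = -\frac14 R \ind{_{abcd}} \gamma \ind{^{cd}_B^A} \xi^B$ to land on a curvature identity which, upon tracing with appropriate $\gamma$-matrices and peeling off the Weyl and scalar parts via \eqref{eq-Riem_decomposition}, isolates the stated Ricci projector condition.

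For the eigenvalue equation, I would compute $\xi \ind{^a^A} \nabla_a \lambda$ by differentiating $\gamma \ind{^b_B^C} \nabla_b \xi^B = \lambda \xi^C$ (equivalently, differentiating \eqref{eq-Killing_spinor}) along $\xi^{aA}$, commuting the two covariant derivatives, and invoking \eqref{eq-twistor-spinor2} with $\zeta^B = \lambda \xi^B$ together with $\nabla_a(\lambda \xi^B) = (\nabla_a \lambda)\xi^B - \frac{\lambda^2}{n}\xi\ind{_a^B}$. The $\Rho$-tensor term, after using \eqref{eq-int_cond_Ric}-type contractions (here: $\Rho \ind{_{ab}} \xi \ind{^a^A} \xi \ind{^b^B}$ reduces to a multiple of $R \, \xi^A \xi^B$ because the tracefree Ricci part has already been shown to annihilate $\xi$ suitably), produces exactly the $\frac{n}{4(n-1)} R$ coefficient. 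The statement that $\lambda$ is constant along $\mcN$ is then immediate: any $X \in \Gamma(\mcN)$ is of the form $X^a = \xi \ind{^a^A} v_A$ by Lemma \ref{lem-vector_decomposition}, so $X^a \nabla_a \lambda = v_A \, \xi \ind{^a^A} \nabla_a \lambda = -\bigl(\lambda^2 + \frac{n}{4(n-1)}R\bigr) v_A \xi^A$, and $v_A \xi^A = 0$ since $v_A$ annihilates $\xi^A$. The main obstacle is bookkeeping: correctly tracking the trace-correction terms in the projectors ${}^\mfC _\xi \Pi$, ${}^\mfA _\xi \Pi$, ${}^\mfF _\xi \Pi$ (whose explicit forms live in the appendix) and making sure the Weyl-condition $C \ind{_{abcd}} \xi \ind{^{cd}^D} = 0$ really kills all of $C \ind{_{abcd}} \zeta^{bcC}$ and not just its leading filtered piece — this requires the identities of Lemma \ref{lem-technical} applied with some care, but it is mechanical rather than conceptual.
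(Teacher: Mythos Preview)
Your approach is essentially the paper's: specialise the conformal Killing spinor machinery to $\zeta^A = \lambda\,\xi^A$. The paper's proof is literally one line --- substitute $\zeta^A = \lambda\,\xi^A$ into \eqref{eq-twistor-spinor}, \eqref{eq-twistor-spinor2}, \eqref{eq-twistor-spinor3} and \eqref{eq-twistor-spinor-int_cond} --- and you are doing the same thing, only less efficiently. In particular, for the eigenvalue identity you do not need to commute derivatives or invoke \eqref{eq-twistor-spinor2}: equation \eqref{eq-twistor-spinor3} with $\zeta^{aB} = \lambda\,\xi^{aB}$ gives $(\xi^{aB}\nabla_a\lambda) + \lambda\,\nabla_a\xi^{aB} = -\tfrac{n}{4(n-1)}R\,\xi^B$, and the Killing equation yields $\nabla_a\xi^{aB} = \lambda\,\xi^B$, so the result drops out immediately. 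Likewise, for ${}^\mfF_\xi\Pi_0^1(\Phi)=0$, substituting $\zeta^A=\lambda\,\xi^A$ into \eqref{eq-twistor-spinor2} gives $(\nabla_a\lambda)\xi^B - \tfrac{\lambda^2}{n}\xi\ind{_a^B} + \tfrac{n}{2}\Rho_{ab}\xi^{bB}=0$; skewing with $\xi^A$ over $AB$ kills the first term and forces $\Rho_{ab}\xi^{b[B}\xi^{A]}\propto\xi^{[A}\xi\ind{_a^{B]}}$, i.e.\ the $\mfF_0^1$-component of $\Phi$ vanishes.

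One caution: your first proposed route for ${}^\mfF_\xi\Pi_0^1(\Phi)=0$ --- checking ${}^\mfW_\xi\Pi_0^1(\Gamma)=0$ and then ``dimension-matching'' as in Proposition~\ref{prop-recurrent-spinor} --- does not work as stated. The dimension-matching argument there is purely algebraic and rests on the curvature constraint $R_{abcd}\xi^{cdD}\propto\xi^D$ coming from recurrence; an intrinsic-torsion condition on $\Gamma$ is a first-order statement about $\nabla\xi$ and does not by itself constrain $\Phi$. Your alternative route (differentiate again and commute) is fine, but it is exactly what produces \eqref{eq-twistor-spinor2}, which is already at your disposal.
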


The following proposition is straightforward.
\begin{prop}\label{prop-Killing-foliating}
 Let $\xi^A$ be a pure conformal Killing spinor on $(\mcM,g)$ with almost null structure $\mcN_\xi$. Set $\zeta^A := \frac{\sqrt{2}}{n} \nabla_a \xi^{aA}$. Then
 $\xi^A$ satisfies \eqref{eq-cond-12}, i.e.\ $\mcN_\xi$ is totally co-geodetic, if and only if $\xi^{[A} \zeta^{B]} = 0$, i.e. $\xi^A$ is a Killing spinor. This being the case, we have further $\left( \nabla \ind{_a} \xi \ind*{^b^{[A}} \right) \xi \ind*{_b^B} \xi \ind*{^{C]}} = 0$.
\end{prop}

\begin{rem}
The gist of Propositions \ref{prop-conformal-Killing-foliating} and \ref{prop-Killing-foliating} is the filtration of $\prb$-modules $\mfS^{\frac{m}{2}} \subset \mfS^{\frac{m-2}{2}} \subset \mfS^{\frac{m-4}{2}}$. The spinor $\zeta^A$ belonging to one of these submodules determines the geometric property of $\mcN_\xi$.
\end{rem}

The following result is analogous to the one given in even dimensions in \cite{Hammerl2016}.
\begin{prop}\label{prop-cKsp2par}
Let $\xi^A$ be a pure conformal Killing spinor whose associated null structure $\mcN_\xi$ is integrable and co-integrable. Then, locally, there exists a conformal rescaling such that $\xi^A$ is parallel, up to the freedom of adding to such a conformal rescaling any holomorphic function constant along the leaves of $\mathcal{N}_\xi^\perp$.
\end{prop}

\begin{proof}
We assume that $\mcN_\xi$ is integrable and co-integrable so that by Proposition \ref{prop-conformal-Killing-foliating}, $\xi^A$ and $\zeta^A := \frac{\sqrt{2}}{n} \nabla_a \xi^{aA}$ satisfy \eqref{eq-pure-twistor-spinor}. In particular, $\zeta^A \in \im \xi_a^A$. We must apply the transformation \eqref{eq-twistor-spinor-transform} to find a holomorphic conformal factor $\Omega$ such that $\hat{\zeta}^A =0$. First, we show that locally one can always find a holomorphic function $\phi$ such that $\xi^{[A} \zeta^{B]} = - \frac{1}{\sqrt{2}} \xi^{[A} \xi \ind{^a^{B]}} \nabla_a \phi$, which follows from the integrability of $\mathcal{N}_\xi$, the twistor equation \eqref{eq-twistor-spinor} and its prolongation \eqref{eq-twistor-spinor2}. This yields a conformal factor such that $\xi^A$ is a solution of the Killing spinor equation \eqref{eq-Killing_spinor}. One can then find a holomorphic function $\psi$ such that $\lambda \, \xi^{A} := \xi \ind{^a^{A}} \nabla_a \psi$, which yields a conformal factor that turns our Killing spinor into a parallel spinor. There is the freedom of adding to the scale a smooth function constant along $\mathcal{N}_\xi^\perp$.
\end{proof}

 A similar result is given in \cite{Lischewski2013a}.

\subsubsection{Relation to the Goldberg-Sachs theorem}
In four dimensions, the Goldberg-Sachs theorem \cite{Goldberg2009} gives a relation between the existence of integrable null structures and degeneracy conditions on the Weyl curvature -- for generalisations, see \cite{Gover2010}. A `coarse' higher-dimensional generalisation is given in \cite{Taghavi-Chabert2012}, which can be formulated in the following way in odd dimensions.

\begin{thm}[\cites{Taghavi-Chabert2011,Taghavi-Chabert2012}]
Assume $m \geq 2$. Let $[ \xi^A ]$ be a holomorphic projective pure spinor field on a $(2m+1)$-dimensional complex Riemannian manifold $(\mcM,g)$ with associated almost null structure $\mcN_\xi$. Suppose the Weyl tensor and the Cotton-York tensor satisfies the algebraic degeneracy conditions
\begin{align}\label{eq-GS}
\begin{aligned}
{}^\mfC _\xi \Pi_{-1}^0 (C) = {}^\mfC _\xi \Pi_{-1}^1 (C) = {}^\mfC _\xi \Pi_{-1}^2 (C) & = 0 \, , \qquad \mbox{i.e.} \qquad \xi \ind*{^a^A} \xi \ind*{^b^B} \xi \ind*{^c^{[C}} C \ind{_{abcd}} \xi \ind*{^{D]}} = 0 \, , \\
{}^\mfA _\xi \Pi_{-2}^0 (A) = {}^\mfA _\xi \Pi_{-2}^1 (A) & = 0 \, , \qquad \mbox{i.e.} \qquad \xi \ind*{^{[A}} \xi \ind*{^a^{B]}} \xi \ind*{^b^C} \xi \ind*{^c^D} A \ind{_{abc}} = 0 \, .
\end{aligned}
\end{align}
Suppose further that the Weyl tensor is otherwise generic. Then $[\xi^A]$ satisfies \eqref{eq-foliating}, i.e.\ $\mcN_\xi$ is integrable and co-integrable.
\end{thm}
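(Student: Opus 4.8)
The plan is to read the conclusion through the intrinsic torsion: by the Remark following Proposition \ref{prop-foliating-spinor}, $\langle \xi^A \rangle$ is foliating precisely when its intrinsic torsion $\Gamma_{abc}\xi^{bB}\xi^{cC}$ lies in the submodule $\mfW^{-1}$ of $\mfW$, i.e.\ when all the components living in $\mfW_{-3}^0$, $\mfW_{-3}^1$, $\mfW_{-2}^0$, $\mfW_{-2}^1$, $\mfW_{-2}^2$ of Proposition \ref{prop-intorsion-class-alg} vanish, equivalently when the projections ${}^\mfW_\xi\Pi_{-3}^j(\Gamma)$ and ${}^\mfW_\xi\Pi_{-2}^j(\Gamma)$ all vanish. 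So the entire argument is to force these five $\prb$-components of $\nabla\langle\xi\rangle$ to vanish, and the only available differential input is the second Bianchi identity; the genericity hypothesis on the Weyl tensor is what converts an algebraic identity of the form $(\text{Weyl component})\cdot(\text{torsion component})=0$ into the vanishing of the torsion component.

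First I would record the differential Bianchi identity $\nabla_{[e}C_{ab]cd} = g_{c[e}A_{|d|ab]} - g_{d[e}A_{|c|ab]}$ (and its contracted form relating $\nabla^aC_{abcd}$ to the Cotton--York tensor $A_{abc}$), and then contract it with products of the spinor bilinears $\xi^{aA}=\xi^B\gamma^a{}_B{}^A$ and $\xi^{abA}$ so that the left-hand side becomes the Levi-Civita derivative of one of the $\prb$-components ${}^\mfC_\xi\Pi_i^j(C)$ of the Weyl tensor. When we move that derivative off the Weyl tensor and onto the spinor bilinears we produce terms of the form (component of $C$)$\times$(component of $\nabla\langle\xi\rangle$). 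The point of the first algebraic degeneracy condition in \eqref{eq-GS}, namely ${}^\mfC_\xi\Pi_{-1}^0(C)={}^\mfC_\xi\Pi_{-1}^1(C)={}^\mfC_\xi\Pi_{-1}^2(C)=0$, is that the left-hand side and all the terms involving the `shallow' Weyl components drop out, while the second condition ${}^\mfA_\xi\Pi_{-2}^0(A)={}^\mfA_\xi\Pi_{-2}^1(A)=0$ kills the corresponding shallow Cotton--York contributions; what survives is a $\g_0$-equivariant relation $\mathsf{L}\big({}^\mfW_\xi\Pi_{-3}^j(\Gamma)\big)=0$ whose coefficient operator $\mathsf{L}$ is built out of the deeper Weyl components $\mfC_i^k$, $i\ge 0$. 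By Schur's lemma $\mathsf{L}$ is, on each $\g_0$-isotypic summand, either zero or an isomorphism; the clause ``the Weyl tensor is otherwise generic'' is precisely the requirement that it be an isomorphism on the relevant summands, so ${}^\mfW_\xi\Pi_{-3}^0(\Gamma)={}^\mfW_\xi\Pi_{-3}^1(\Gamma)=0$, i.e.\ $\langle\xi^A\rangle$ is almost foliating (its torsion lies in $\mfW^{-2}$).

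The second stage is a bootstrap. With the torsion now known to lie in $\mfW^{-2}$, I would re-run the same Bianchi contractions, but this time paired with a further factor of $\xi^{aA}$ (differentiating the degeneracy conditions along the distribution $\mcN$ and commuting covariant derivatives). The extra vanishing coming from almost-foliating removes enough terms that one is left with a second equivariant relation, now of the form $\mathsf{L}'\big({}^\mfW_\xi\Pi_{-2}^j(\Gamma)\big)=0$ with $\mathsf{L}'$ built from the components $\mfC_0^k$ and $\mfC_1^k$; genericity again makes $\mathsf{L}'$ injective on the relevant isotypic pieces, so ${}^\mfW_\xi\Pi_{-2}^0(\Gamma)={}^\mfW_\xi\Pi_{-2}^1(\Gamma)={}^\mfW_\xi\Pi_{-2}^2(\Gamma)=0$. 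Combining the two stages, the intrinsic torsion lies in $\mfW^{-1}$, so by Proposition \ref{prop-intorsion-class} the spinor satisfies \eqref{eq-foliating}; both $\mcN$ and $\mcN^\perp$ are then involutive and hence, by the Frobenius theorem, locally integrable, which is the assertion.

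The main obstacle is the bookkeeping in the two middle steps: one must identify exactly which $\g_0$-equivariant maps $\mathsf{L}$, $\mathsf{L}'$ arise as the coefficients of the intrinsic-torsion components after the Bianchi contractions, using the explicit projectors ${}^\mfC_\xi\Pi_i^j$ of appendix \ref{sec-spinor-descript}, and then phrase ``otherwise generic'' as an honest open nondegeneracy condition on specific $\prb$-submodules of $\gr(\mfC)$ so that Schur's lemma applies cleanly. The multiplicities recorded in Table \ref{table-C} -- in particular the isotypic pair $\{\mfC_0^1,\mfC_0^2\}$, which does not split canonically -- and the low-rank exceptions $m=2$ and $m=3$ (where several $\mfC_i^j$ are absent) have to be dealt with separately, and it is here that one must check that the genericity assumption is strong enough to cover every summand that the Bianchi identity leaves active.
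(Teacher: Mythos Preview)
The paper does not prove this theorem: it is quoted verbatim from \cite{Taghavi-Chabert2012} (note the citation in the theorem header) and is included only as context for the subsequent Conjecture \ref{conjec-GS}. There is therefore no ``paper's own proof'' to compare your proposal against.

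That said, your outline is broadly the right shape for how the result in \cite{Taghavi-Chabert2012} is obtained: one differentiates the algebraic degeneracy conditions \eqref{eq-GS}, uses the (contracted) Bianchi identity to trade derivatives of Weyl components for Cotton--York terms plus products of surviving Weyl components with intrinsic-torsion components, and then invokes the genericity hypothesis to cancel the Weyl coefficients and force the torsion into $\mfW^{-1}$. Your identification of the conclusion with the vanishing of ${}^\mfW_\xi\Pi_{-3}^j(\Gamma)$ and ${}^\mfW_\xi\Pi_{-2}^j(\Gamma)$ via Proposition \ref{prop-foliating-spinor} is correct. The one place where your sketch is more schematic than the actual argument is the appeal to Schur's lemma: in practice the relations coming out of Bianchi are not neatly block-diagonal on $\g_0$-isotypic summands, and the ``otherwise generic'' clause in \cite{Taghavi-Chabert2012} is formulated as explicit nondegeneracy of specific components (rather than as an abstract open condition deduced from representation theory), with the low-dimensional cases $m=2,3$ handled directly. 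If you want to turn your sketch into a proof you would need to write out those coefficient maps explicitly, which is essentially what the cited reference does.
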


In the light of Proposition \ref{prop-conformal-Killing-foliating} and Example \ref{exa-235}, there are pure spinor fields with non-integrable and non-co-integrable almost null structures, whose integrability condition satisfies \eqref{eq-GS}, but violates the genericity assumption by virtue of Proposition \ref{prop-int-cond-twistor-spinor}. This motivates the following conjecture improving \cite{Taghavi-Chabert2012}:
\begin{conjec}\label{conjec-GS}
Suppose that $[\xi^A]$ is a projective pure spinor field on a $(2m+1)$-dimensional non-conformally flat Einstein spin complex Riemannian manifold $(\mcM,g)$ such that the Weyl tensor satisfies $\xi \ind*{^a^A} \xi \ind*{^b^B} \xi \ind*{^c^{[C}} C \ind{_{abcd}} \xi \ind*{^{D]}} = 0$. Then $\xi \ind*{^A}$ satisfies \eqref{eq-cond-22}.
\end{conjec}
Weaker conditions such as \eqref{eq-cond-31} may well be possible too, but an investigation of the veracity of the above conjecture is beyond the scope of this article.

\begin{rem}
A non-conformally invariant Goldberg-Sachs theorem in dimension three is given in \cite{Nurowski2015}.
\end{rem}

\subsection{Application to real pseudo-Riemannian manifolds}\label{sec-ps-Riem}
Almost null structures on odd-dimensional real pseudo-Riemannian manifolds are subject to considerations regarding reality conditions and analyticity similar to the even-dimensional case -- see \cite{Taghavi-Chabert2016} for details. It suffices to say here that the real index of a pure spinor -- see section \ref{sec-real} -- allows for a wider range of geometric interpretations. For positive definite metric, the intrinsic torsion of an \emph{almost contact metric structure}, i.e.\ an odd-dimensional analogue of an almost Hermitian structure, was investigated in \cites{Alexiev1986,Chinea1990}. Finally, we emphasise that all the results obtained in the present article can be translated into the smooth category in the case of a spin oriented and time-oriented smooth peudo-Riemannian manifold of signature $(m,m+1)$ equipped with a \emph{real} projective pure spinor or a \emph{real} almost null structure.

\paragraph{Acknowlegments}
I would like to thank Andreas \v{C}ap, Pawe\l~Nurowski, Josef \v{S}ilhan, Dimitri Alekseevsky and more particularly, Michael Eastwood and the referee for useful discussions which led to the revision of this paper. I am also grateful to the University of Turin, at which the very final revisions were completed.

This work was funded by a SoMoPro (South Moravian Programme) Fellowship: it has received a financial contribution from the European
Union within the Seventh Framework Programme (FP/2007-2013) under Grant Agreement No. 229603, and is also co-financed by the South Moravian Region.

Revision of this paper was carried out while the author was on an Eduard \v{C}ech Institute postdoctoral fellowship GPB201/12/G028, and a GA\v{C}R (Czech Science Foundation) post-doctoral grant GP14-27885P.
\appendix
\section{Spinorial description of curvature tensors}\label{sec-spinor-descript}
We follow the notation of section \ref{sec-algebra} throughout, i.e. $\mfV$ is a $(2m+1)$-dimensional complex vector space equipped with a non-degenerate symmetric bilinear form $g_{ab}$ and a pure spinor $\xi^A$.

\subsection{Elements of the $\g_0$-modules of $\mfF$, $\mfA$ and $\mfC$}
We choose a pure spinor $\eta_A$ such that $\xi^A \eta_A= -\frac{1}{2}$ to split $\mfV$ as \eqref{eq-null-grading}. We shall use the elements $u_a$, $h_{ab}$ and $\omega_{ab}$ given by \eqref{eq-u-h} and \eqref{eq-omega}.  Upstairs and downstairs spinor indices will refer to $\mfS_{\frac{m-2}{2}} = \im \xi_a^A \cap \ker \eta_A$ and $\mfS_{-\frac{m-2}{2}} = \im \eta_{aA} \cap \ker \xi^A$ respectively.  A spinor will be referred to as \emph{(totally) tracefree}, if the contraction of any pair of indices with $I \ind*{_B^A}$, as given by \eqref{eq-identity-map}, vanishes, e.g. $\sigma \ind{_A^B} I \ind*{_B^A} = 0$. We now describe elements of the $\g_0$-modules given in Propositions \ref{prop-TFRicci-classification}, \ref{prop-Cotton-York-classification} and \ref{prop-Weyl-classification}.

\paragraph{The tracefree Ricci tensor}
 Let $\Phi \ind{_{ab}} \in \mfF$. Then
\begin{itemize}
 \item $\Phi \ind{_{ab}} \in \breve{\mfF}_0^1$ if and only if $\Phi_{ab} = \xi \ind*{_{\lp{a}} ^A} \eta \ind{_{\rp{b}} _B} \Phi \ind{_A^B}$
for some tracefree $\Phi \ind{_A^B}$;
\item $\Phi \ind{_{ab}} \in \breve{\mfF}_0^0$ if and only if $\Phi _{ab} = \Phi \left( u \ind{_a} u \ind{_b} + \frac{1}{n-1} h \ind{_{ab}} \right)$ for some complex $\Phi$;
\item $\Phi \ind{_{ab}} \in \breve{\mfF}_1^0$ if and only if $\Phi _{ab} = \xi \ind*{_{(a} ^A}  \Phi_A u \ind{_{b)}}$
for some $\Phi_{A}$;
\item $\Phi \ind{_{ab}} \in \breve{\mfF}_2^0$ if and only if $\Phi _{ab} = \xi \ind*{_a ^A} \xi \ind*{_b ^B} \Phi_{AB}$ for some $\Phi_{AB}= \Phi_{(AB)}$.
\end{itemize}
Using the duality $(\breve{\mfF}_{-i}^0)^* \cong \breve{\mfF}_i^0$, spinorial decompositions of elements of $\breve{\mfF}_{-i}^j$ for $i=1,2$ can be obtained by interchanging $\xi^A$ and $\eta_A$, and making appropriate changes of index structures.

\paragraph{The Cotton-York tensor}
 Let $A_{abc} \in \mfA$. Then
\begin{itemize}
\item $A_{abc} \in \breve{\mfA}_0^0$ if and only if $A \ind{_{abc}} = a \left( u \ind{_a} \omega \ind{_{bc}} - u \ind{_{\lb{b}}} \omega \ind{_{\rb{c}a}} \right)$ for some complex $a$;
\item $A_{abc} \in \breve{\mfA}_0^1$ if and only if $A \ind{_{abc}} = u \ind{_a} A \ind{_{bc}} - u \ind{_{\lb{b}}} A \ind{_{\rb{c}a}}$
where $A_{ab} = \xi \ind*{_{\lb{a}}^A} \eta \ind{_{\rb{b}}_B} A \ind{_A^B}$ for some tracefree $A \ind{_A^B}$;
\item $A_{abc} \in \breve{\mfA}_0^2$ if and only if $A \ind{_{abc}} = A \ind{_{a\lb{b}}} u \ind{_{\rb{c}}}$ where $A_{ab} = \xi \ind*{_{\lp{a}}^A} \eta \ind{_{\rp{b}}_B} A \ind{_A^B}$ for some tracefree $A \ind{_A^B}$;
 \item $A_{abc} \in \breve{\mfA}_1^0$ if and only if $A \ind{_{abc}} = A \ind{_a} \omega \ind{_{bc}} - A \ind{_{\lb{b}}} \omega \ind{_{\rb{c}a}} + \frac{3}{n-2} h \ind{_{a \lb{b}}} \omega \ind{_{\rb{c} d}} A \ind{^d}$ where $A_c = \xi \ind*{_c^C} A_C$ for some $A_A$;
 \item $A_{abc} \in \breve{\mfA}_1^1$ if and only if $A \ind{_{abc}} = u \ind{_a} u \ind{_{\lb{b}}} A \ind{_{\rb{c}}} + \frac{1}{n-2} h \ind{_{a \lb{b}}} A \ind{_{\rb{c}}}$ where $A_a = \xi \ind*{_a^A} A_A$ for some $A_A$;
 \item $A_{abc} \in \breve{\mfA}_1^2$ if and only if $A_{abc} =  \eta \ind{_a _C} \xi \ind*{_b ^A} \xi \ind*{_c ^B} A \ind{_{AB}^C}  - \xi \ind*{_a ^A} \xi \ind*{_{\lb{b}} ^B} \eta \ind{_{\rb{c}} _C} A \ind{_{AB}^C}$ for some tracefree $A \ind{_{AB}^C} = A \ind{_{[AB]}^C}$; 
 \item $A_{abc} \in \breve{\mfA}_1^3$ if and only if $A_{abc} = \xi \ind*{_a ^A} \xi \ind*{_{\lb{b}} ^B} \eta \ind{_{\rb{c}} _C} A \ind{_{AB}^C}$ for some tracefree $A \ind{_{AB}^C} = A \ind{_{(AB)}^C}$;
\item $A_{abc} \in \breve{\mfA}_2^0$ if and only if $A \ind{_{abc}} = u \ind{_a} A \ind{_{bc}} - u \ind{_{\lb{b}}} A \ind{_{\rb{c}a}}$
where $A_{ab} = \xi \ind*{_a^A} \xi \ind*{_b^B} A \ind{_{AB}}$ fo some $A \ind{_{AB}}=A \ind{_{[AB]}}$;
\item $A_{abc} \in \breve{\mfA}_2^1$ if and only if $A \ind{_{abc}} = A \ind{_{a\lb{b}}} u \ind{_{\rb{c}}}$ where $A_{ab} = \xi \ind*{_a^A} \xi \ind*{_b^B} A \ind{_{AB}}$ for some $A \ind{_{AB}}=A \ind{_{(AB)}}$;
 \item $A_{abc} \in \breve{\mfA}_3^0$ if and only if $A_{abc} = \xi \ind*{_a ^A} \xi \ind*{_b ^B} \xi \ind*{_c ^C} A \ind{_{ABC}}$
for some $A_{ABC} = A_{A[BC]}$ satisfying $A_{[ABC]} = 0$.
\end{itemize}
Using the duality $(\breve{\mfA}_{-i}^j)^* \cong \breve{\mfA}_i^j$, spinorial decompositions of elements of $\breve{\mfA}_{-i}^j$ for $i=1,2,3$ can be obtained by interchanging $\xi^A$ and $\eta_A$, and making appropriate changes of index structures.

\paragraph{The Weyl tensor}
Let $C_{abcd} \in \mfC$. Then
\begin{itemize}
 \item $C_{abcd} \in \breve{\mfC}_0^0$ if and only if $C \ind{_{abcd}} = c \left( 2 \, \omega \ind{_{ab}} \omega \ind{_{cd}} - 2 \, \omega \ind{_{a\lb{c}}} \omega \ind{_{\rb{d}b}} 
+ \frac{6}{n-2} \, h \ind{_{a \lb{c}}} h \ind{_{\rb{d}b}} \right)$ for some complex $c$;
 \item $C_{abcd} \in \breve{\mfC}_0^1$ if and only if
\begin{align*}
 C \ind{_{abcd}} & = \omega \ind{_{ab}} C \ind{_{cd}} + C \ind{_{ab}} \omega \ind{_{cd}} - 2 \, \omega \ind{_{\lb{a}|\lb{c}}} C \ind{_{\rb{d}|\rb{b}}}
- \frac{6}{n-3} \, \left( h \ind{_{\lb{a} | \lb{c}}} \omega \ind{_{\rb{d}}^e} C \ind{_{|\rb{b}e}} + h \ind{_{\lb{c} | \lb{a}}} \omega \ind{_{\rb{b}}^e} C \ind{_{|\rb{d}e}} \right) \, ,
\end{align*}
where $C \ind{_{cd}} := 2 \, \xi \ind*{_{\lb{c}} ^C} \eta \ind{_{\rb{d}} _D} C \ind{_C^D}$ for some tracefree $C \ind{_C^D}$;

 \item $C_{abcd} \in \breve{\mfC}_0^2$ if and only if $C_{abcd} = u \ind{_{\lb{a}}} C \ind{_{\rb{b}\lb{c}}} u \ind{_{\rb{d}}} 
- \frac{1}{n-3} \, h \ind{_{\lb{a} | \lb{c}}} C \ind{_{\rb{d}|\rb{b}}}$ where $C \ind{_{cd}} := 2 \, \xi \ind*{_{\lp{c}} ^C} \eta \ind{_{\rp{d}} _D} C \ind{_C^D}$ for some tracefree $C \ind{_C^D}$;

\item $C_{abcd} \in \breve{\mfC}_0^3$ if and only if
\begin{align*}
  C_{abcd} = \xi \ind*{_a ^A} \xi \ind*{_b ^B} \eta \ind{_c _C} \eta \ind{_d _D} C \ind{_{A B}^{C D}} + \xi \ind*{_c ^A} \xi \ind*{_d ^B} \eta \ind{_a _C} \eta \ind{_b _D} C \ind{_{A B}^{C D}} - 2 \, \xi \ind*{_{\lb{a}|} ^A} \xi \ind*{_{\lb{c}} ^C} \eta \ind{_{\rb{d} |} _D} \eta \ind{_{\rb{b}} _B} C \ind{_{A C}^{D B}}
\end{align*}
for some tracefree  $C \ind{_{A C}^{D B}} = C \ind{_{[A C]}^{[D B]}}$;

 \item $C_{abcd} \in \breve{\mfC}_0^4$ if and only if $C_{abcd} =  \xi \ind*{_{\lb{a}|} ^A} \xi \ind*{_{\lb{c}} ^C} \eta \ind{_{\rb{d}|} _D} \eta \ind{_{\rb{b}} _B} C  \ind{_{A C}^{D B}}$ for some tracefree $C \ind{_{A C}^{D B}} = C \ind{_{(A C)}^{(D B)}}$;

\item $C_{abcd} \in \breve{\mfC}_1^0$ if and only if
\begin{multline*}
 C_{abcd} = \omega \ind{_{ab}} C \ind{_{\lb{c}}} u \ind{_{\rb{d}}} + \omega \ind{_{cd}} C \ind{_{\lb{a}}} u \ind{_{\rb{b}}} - \omega \ind{_{\lb{a}|\lb{c}}} C \ind{_{\rb{d}}} u \ind{_{|\rb{b}}} - \omega \ind{_{\lb{c}|\lb{a}}} C \ind{_{\rb{b}}} u \ind{_{|\rb{d}}}  \\
+ \frac{3}{n-2} \left( h \ind{_{\lb{a} | \lb{c}}} u \ind{_{\rb{d}}} \omega \ind{_{|\rb{b}}^e} C \ind{_e}  + h \ind{_{\lb{c} | \lb{a}}} u \ind{_{\rb{b}}} \omega \ind{_{|\rb{d}}^e} C \ind{_e}\right) \, ,
\end{multline*}
where $C \ind{_a} = \xi \ind*{_a^A} C \ind{_A}$ for some $C \ind{_A}$;

\item $C_{abcd} \in \breve{\mfC}_1^1$ if and only if $C_{abcd} = u \ind{_{\lb{a}}} C \ind{_{\rb{b}cd}} + u \ind{_{\lb{c}}} C \ind{_{\rb{d}ab}}$ where $C \ind{_{cab}} = \eta \ind{_c _C} \xi \ind*{_a ^A} \xi \ind*{_b ^B} C \ind{_{A B}^C} - \xi \ind*{_c ^A} \xi \ind*{_{\lb{a}} ^B} \eta \ind{_{\rb{b}} _C} C \ind{_{A B}^C}$ for some tracefree $C \ind{_{AB}^C} = C \ind{_{[AB]}^C}$;

\item $C_{abcd} \in \breve{\mfC}_1^2$ if and only if $C_{abcd} = u \ind{_{\lb{a}}} C \ind{_{\rb{b}cd}} + u \ind{_{\lb{c}}} C \ind{_{\rb{d}ab}}$, where $C \ind{_{cab}} = \xi \ind*{_c ^A} \xi \ind*{_{\lb{a}} ^B} \eta \ind{_{\rb{b}} _C} C \ind{_{A B}^C}$ for some tracefree $C \ind{_{AB}^C} = C \ind{_{(AB)}^C}$;

\item $C_{abcd} \in \breve{\mfC}_2^0$ if and only if $C \ind{_{abcd}} = \omega \ind{_{ab}} C \ind{_{cd}} + C \ind{_{ab}} \omega \ind{_{cd}} - 2 \, \omega \ind{_{\lb{a}|\lb{c}}} C \ind{_{\rb{d}|\rb{b}}}$ where $C_{ab} := \xi \ind*{_a ^A} \xi \ind*{_b ^B} C_{A B}$ for some $C _{AB} = C _{[AB]}$;

\item $C_{abcd} \in \breve{\mfC}_2^1$ if and only if $C_{abcd} = u \ind{_{\lb{a}}} C \ind{_{\rb{b}\lb{c}}} u \ind{_{\rb{d}}} 
- \frac{1}{n-3} h \ind{_{\lb{a} | \lb{c}}} C \ind{_{\rb{d}|\rb{b}}}$ where $C \ind{_{cd}} := \xi \ind*{_c ^C} \xi \ind*{_d ^D} C \ind{_{CD}}$ for some $C _{AB} = C _{(AB)}$;

\item $C_{abcd} \in \breve{\mfC}_2^2$ if and only if $C_{abcd} = \xi \ind*{_a ^A} \xi \ind*{_b ^B} \xi \ind*{_{\lb{c}} ^C} \eta \ind{_{\rb{d}}_D} C \ind{_{A B C}^D} + \xi \ind*{_c ^A} \xi \ind*{_d ^B} \xi \ind*{_{\lb{a}} ^C} \eta \ind{_{\rb{b}}_D} C \ind{_{A B C}^D}$ for some $C \ind{_{ABC}^D} = C \ind{_{[AB]C}^D}$ satisfying $C \ind{_{[ABC]}^D} = 0$;

\item $C_{abcd} \in \breve{\mfC}_3^0$ if and only if $C_{abcd} = u \ind{_{\lb{a}}} C \ind{_{\rb{b}cd}} + u \ind{_{\lb{c}}} C \ind{_{\rb{d}ab}}$, where $C \ind{_{abc}} = \xi \ind*{_a ^A} \xi \ind*{_b ^B} \xi \ind*{_c ^C} C \ind{_{ABC}}$ for some $C _{ABC} = C _{[AB]C}$ satisfying $C _{[ABC]} = 0$;

\item $C_{abcd} \in \breve{\mfC}_4^0$ if and only if $C_{abcd} = \xi \ind*{_a ^A} \xi \ind*{_b ^B} \xi \ind*{_c ^C} \xi \ind*{_d ^D} C_{A B C D}$ for some $C _{ABCD} = C _{[AB][CD]}$ satisfying $C _{[ABC]D} = 0$.
\end{itemize}
Using the duality $(\breve{\mfC}_{-i}^j)^* \cong \breve{\mfC}_i^j$, spinorial decompositions of elements of $\breve{\mfC}_{-i}^j$ for $i=1,2,3$ can be obtained by interchanging $\xi^A$ and $\eta_A$, and making appropriate changes of index structures.

\subsection{Maps describing elements of $\prb$-modules of $\mfF$, $\mfA$ and $\mfC$}\label{sec-inv-maps}
The kernels of the following maps ${}^\mfF _\xi \Pi_i^j$, ${}^\mfA _\xi \Pi_i^j$ and ${}^\mfC _\xi \Pi_i^j$ are $\prb$-submodules of the spaces $\mfF$, $\mfA$ and $\mfC$, and are related to irreducible $\prb$-modules $\mfF_i^j$, $\mfA_i^j$ and $\mfC_i^j$ as described in Propositions \ref{prop-TFRicci-classification}, \ref{prop-Cotton-York-classification} and \ref{prop-Weyl-classification}.

\paragraph{The tracefree Ricci tensor}
For $\Phi \ind{_{ab}} \in \mfF$, define
\begin{align*}
{}^\mfF _\xi \Pi_{-2}^0 ( \Phi ) & := \xi \ind{^{\lb{A}}} \xi \ind{^a ^{\rb{B}}} \Phi \ind{_{a b}} \xi \ind{^b ^{\lb{C}}} \xi \ind{^{\rb{D}}} \, , &
{}^\mfF _\xi \Pi_{-1}^0 ( \Phi ) & := \xi \ind{^{\lb{A}}} \xi \ind{^a ^{\rb{B}}} \Phi \ind{_{a b}} \xi \ind{^b ^C} \, , \\
{}^\mfF _\xi \Pi_0^0 ( \Phi ) & := \xi \ind{^a ^A} \xi \ind{^b ^B} \Phi\ind{_{a b}} \, , &
{}^\mfF _\xi \Pi_0^1 ( \Phi ) & := \xi \ind{^{\lb{A}}} \xi \ind{^a ^{\rb{B}}} \Phi \ind{_{a b}} + \frac{1}{n-1} \gamma \ind{_b _C ^{\lb{A}}} \xi \ind{^c ^{\rb{B}}} \xi \ind{^d ^C} \Phi \ind{_{c d}} \, , \\
{}^\mfF _\xi \Pi_1^0 ( \Phi ) & := \xi \ind{^a ^A} \Phi \ind{_{a b}} \, , &
\end{align*}

\paragraph{The Cotton-York tensor}
For $A \ind{_{abc}} \in \mfA$, define
\begin{flalign*}
& {}^\mfA _\xi \Pi^0_{-3} ( A ) := \xi \ind{^{\lb{A}}} \xi \ind{^a^{\rb{B}}} \xi \ind{^b^{\lb{C}}} \xi \ind{^c^D} \xi \ind{^{\rb{E}}} A \ind{_{a b c}} \, , & \\
& {}^\mfA _\xi \Pi^0_{-2} ( A ) := \xi^{[A} \xi \ind{^a^{B}} \xi \ind{^b^{C]}} \xi \ind{^c^D} A \ind{_{a b c}}  \, , & 
& {}^\mfA _\xi \Pi^1_{-2} ( A ) := \xi^{[A} \xi \ind{^a^{B]}} \xi \ind{^b^{[C}} \xi^{D]} \xi \ind{^c^E} A \ind{_{a b c}} \qquad + \qquad \left( [AB] \leftrightarrow [CD] \right)  \, , &
\end{flalign*}
\begin{flalign*}
& {}^\mfA _\xi \Pi^0_{-1} ( A ) := \xi \ind{^{\lb{A}}} \xi \ind{^a^{\rb{B}}} \xi \ind{^{bc}^C} A \ind{_{a b c}} - \frac{1}{n-2} \xi \ind{^a^C} \xi \ind{^b^A} \xi \ind{^c^B} A \ind{_{abc}} \, ,  \qquad \qquad  {}^\mfA _\xi \Pi^1_{-1} ( A ) := \xi \ind{^a^A} \xi \ind{^b^B} \xi \ind{^c^C} A \ind{_{a b c}} \, , & \\
& {}^\mfA _\xi \Pi^2_{-1} ( A ) := \xi \ind{^{[A}} \xi \ind{^a^B} \xi \ind{^b^{C]}} A \ind{_{a b c}} + \frac{1}{2(n-3)} \xi \ind{^{[A}} \xi \ind{^a^{B|}} \xi \ind{^{bd}^D} A \ind{_{a b d}} \gamma \ind{_c_D^{|C]}} - \frac{1}{2(n-3)} \xi \ind{^a^D} \xi \ind{^b^{[A}} \xi \ind{^{d}^B} A \ind{_{a b d}} \gamma \ind{_c_D^{C]}}   \, , & \\
& {}^\mfA _\xi \Pi^3_{-1} ( A ) := \xi \ind{^{[A}} \xi \ind{^a^{B]}} \xi \ind{^b^{[C}} \xi^{D]} A \ind{_{a b c}} + \frac{3}{2(n+1)} \xi \ind{^{[A}} \xi \ind{^a^{B]}} \xi \ind{^{bd}^E} A \ind{_{a b d}} \gamma \ind{_c_E^{[C}} \xi^{D]} + \frac{1}{2(n+1)} \xi \ind{^a^E} \xi \ind{^b^C} \xi \ind{^{d}^D} A \ind{_{a b d}} \gamma \ind{_c_E^{[A}} \xi^{B]} \\ & \qquad \qquad \qquad \qquad \qquad \qquad \qquad \qquad \qquad \qquad \qquad \qquad \qquad \qquad + \qquad \qquad \left( [AB] \leftrightarrow [CD] \right)  \, , & 
\end{flalign*}
\begin{flalign*}
& {}^\mfA _\xi \Pi^0_0 ( A ) := \xi \ind{^a^A} \xi \ind{^{bc}^B} A \ind{_{a b c}} \, ,  \qquad \qquad  {}^\mfA _\xi \Pi^1_0 ( A ) := \xi \ind{^a^A} \xi \ind{^b^{[B}} A \ind{_{a b c}} \xi^{C]} + \frac{1}{n-1} \xi \ind{^a^A} \xi \ind{^{bd}^D} A \ind{_{a b d}} \gamma \ind{_c_D^{[B}} \xi^{C]} \, , \\
& {}^\mfA _\xi \Pi^2_0 ( A ) := \xi^{[A} \xi \ind{^a^{B]}} \xi \ind{^b^C} A \ind{_{a b c}} - \frac{1}{2} A \ind{_{c a b}} \xi \ind{^{ab}^{[A}} \xi^{B]} \xi \ind{^C} - \frac{1}{n-1} \xi \ind{^a^C} \xi \ind{^{bd}^D} A \ind{_{abd}} \gamma \ind{_c_D^{[A}} \xi^{B]} \, , & 
\end{flalign*}
\begin{flalign*}
& {}^\mfA _\xi \Pi^0_1 ( A ) := \xi \ind{^{bc}^C} A \ind{_{a b c}} \xi \ind{^D} + \frac{2}{n-2} \xi \ind{^b^B} \xi \ind{^b^C} A \ind{_{bca}} \, , &
& {}^\mfA _\xi \Pi^1_1 ( A ) := \xi \ind{^a^A} \xi \ind{^b^B} A \ind{_{a b c}}  \, ,  & 
\end{flalign*}
\begin{flalign*}
& {}^\mfA _\xi \Pi^2_1 ( A ) := A \ind{_{[a b] c}} \xi \ind{^c^{[C}} \xi \ind{^{D]}} + \frac{1}{2(n-3)} \gamma \ind{_{[a}_E^{[C}} \xi \ind{^{D]}} A \ind{_{b]cd}} \xi \ind{^{cd}^E}  + \frac{1}{n-3} \xi \ind{^c^E} \xi \ind{^d^{[C}} A \ind{_{cd[a}} \gamma \ind{_{b]}_E^{D]}}  \, , & \\
& {}^\mfA _\xi \Pi^3_1 ( A ) := A \ind{_{(a b) c}} \xi \ind{^c^{[C}} \xi \ind{^{D]}} - \frac{3}{2(n+1)} \gamma \ind{_{(a}_E^{[C}} \xi \ind{^{D]}} A \ind{_{b)cd}} \xi \ind{^{cd}^E}  - \frac{1}{n+1} \xi \ind{^c^E} \xi \ind{^d^{[C}} A \ind{_{cd(a}} \gamma \ind{_{b)}_E^{D]}} \, , & 
\end{flalign*}
\begin{flalign*}
& {}^\mfA _\xi \Pi^0_2 ( A ) := \xi \ind{^c^C} A \ind{_{[a b] c}} \, ,  & 
& {}^\mfA _\xi \Pi^1_2 ( A ) := \xi \ind{^c^C} A \ind{_{(a b) c}} \, .  & 
\end{flalign*}

\paragraph{The Weyl tensor}
For $C \ind{_{abcd}} \in \mfC$, define
\begin{flalign*}
 & {}^\mfC _\xi \Pi_{-4}^0 (C) := \xi \ind{^{\lb{A}}} \xi \ind{^a ^B} \xi \ind{^b ^{\rb{C}}} C \ind{_{abcd}} \xi \ind{^c ^{\lb{D}}} \xi \ind{^d ^E} \xi \ind{^{\rb{F}}} \, ,  & & {}^\mfC _\xi \Pi_{-3}^0 (C) := \xi \ind{^{\lb{A}}} \xi \ind{^a ^B} \xi \ind{^b ^{\rb{C}}} C \ind{_{abcd}} \xi \ind{^c ^D} \xi \ind{^d ^E} \, ,  &
\end{flalign*}
\begin{flalign*}
 & {}^\mfC _\xi \Pi_{-2}^0 (C) := \xi \ind{^{\lb{A}}} \xi \ind{^a ^B} \xi \ind{^b ^{\rb{C}}} C \ind{_{abcd}} \xi \ind{^{cd} ^D} \, ,  & 
 & {}^\mfC _\xi \Pi_{-2}^1 (C) := \xi \ind{^a ^A} \xi \ind{^b ^B} C \ind{_{abcd}} \xi \ind{^c ^C} \xi \ind{^d ^D} \, ,  & 
\end{flalign*}
\begin{multline*}
 {}^\mfC _\xi \Pi_{-2}^2 (C) := \xi \ind{^a ^A} \xi \ind{^b ^B} C \ind{_{abcd}} \xi \ind{^c ^{[C}} \xi \ind{^{D]}} + \frac{1}{n+1} \left( \xi \ind{^a^A} \xi \ind{^b^B} C \ind{_{abce}} \xi \ind{^{ce}^E} \gamma \ind{_d_E^{[C}} - \xi \ind{^a^{[C|}} \xi \ind{^b^{[A|}} C \ind{_{abce}} \xi \ind{^{ce}^E} \gamma \ind{_d_E^{|B]}} \right) \xi \ind{^{D]}} \\
 - \frac{1}{n-3} \gamma \ind{_d_E^{[A|}} \xi \ind{^a^E} \xi \ind{^b^{|B]}} C \ind{_{abde}} \xi \ind{^d^C} \xi \ind{^e^D} \pmod {\xi \ind{^{[A}} \alpha \ind{_d^{B][C}} \xi \ind{^{D]}} }\, ,
\end{multline*}
\begin{flalign*}
& {}^\mfC _\xi \Pi_{-1}^0 (C) := \xi \ind{^a ^A} \xi \ind{^b ^B} C \ind{_{abcd}} \xi \ind{^{cd} ^C} \, ,  & \\
& {}^\mfC _\xi \Pi_{-1}^1 (C) := \xi \ind{^a ^A} \xi \ind{^b ^{B}} C \ind{_{abcd}} \xi \ind{^d ^D} - \xi \ind{^{[A}} \xi \ind{^{ab} ^{B]}} C \ind{_{abcd}} \xi \ind{^d ^D} + \frac{1}{n-3} \xi \ind{^{ab}^D} C \ind{_{abed}}  \xi \ind{^e^E} \xi \ind{^d^{[A}} \gamma \ind{_c_E^{B]}} \, , & \\
& {}^\mfC _\xi \Pi_{-1}^2 (C) := \xi \ind{^a ^A} \xi \ind{^b ^{B}} C \ind{_{abcd}} \xi \ind{^d ^D} - \xi \ind{^{ab} ^D} C \ind{_{abcd}} \xi \ind{^d ^{[A}} \xi \ind{^{B]}} + \xi \ind{^{ab} ^{[A}} C \ind{_{abcd}} \xi \ind{^d ^{B]}} \xi \ind{^D} \\
& \qquad \qquad \qquad - \frac{1}{n+1} \left( \xi \ind{^{ab}^E} C \ind{_{abed}} \xi \ind{^e^A} \xi \ind{^d^B} \gamma \ind{_c_E^D} - \xi \ind{^{ab}^{[A}} C \ind{_{abed}} \xi \ind{^e^{B]}} \xi \ind{^d^E} \gamma \ind{_c_E^D} \right) \\
& \qquad \qquad \qquad \qquad \qquad - \frac{1}{n+1} \left( \xi \ind{^{ab}^E} C \ind{_{abde}} \xi \ind{^{de}^{[A}} \gamma \ind{_c_E^{B]}} \xi \ind{^D} - \xi \ind{^{ab}^E} C \ind{_{abde}} \xi \ind{^{de}^D} \gamma \ind{_c_E^{[A}} \xi \ind{^{B]}} \right) \pmod { \xi \ind{^D} \xi \ind{^{[A}} \alpha \ind{_d^{B]}} }  \, , & 
\end{flalign*}
\begin{flalign*}
& {}^\mfC _\xi \Pi_0^0 (C) := \xi \ind{^{ab} ^A} C \ind{_{abcd}} \xi \ind{^{cd} ^B} \, ,  \\
& {}^\mfC _\xi \Pi_0^1 (C) := \xi \ind{^{ab} ^{(A}} C \ind{_{abcd}} \xi \ind{^d ^{B)}} \xi \ind{^C} - \frac{1}{n-1} \xi \ind{^{ab} ^{E}}  \xi \ind{^{de} ^{(A}} C \ind{_{abde}} \gamma \ind{_c _E ^{B)}} \xi \ind{^C} - 2 \frac{n-1}{n-3} \xi \ind{^a ^C} \xi \ind{^b ^{(A}} C \ind{_{abcd}} \xi \ind{^d ^{B)}} \pmod{ \xi^A \xi^B \alpha \ind{_d^C} }\, ,   \\
& {}^\mfC _\xi \Pi_0^2 (C) := \xi \ind{^a ^A} \xi \ind{^b ^B} C \ind{_{abcd}} \xi \ind{^c ^C} \, ,  & 
\end{flalign*}
\begin{multline*}
{}^\mfC _\xi \Pi_0^3 (C) := \xi \ind{^a ^{[A}} C \ind{_{a[bc]d}} \xi \ind{^d ^B} \xi \ind{^{C]}} + \frac{1}{n-5} \left( \gamma \ind{_{[b}_E^{[A}} \xi \ind{^{dB}} \xi \ind{^{C]}} C \ind{_{c]dae}} \xi \ind{^{ae}^E} + \xi \ind{^a^{[A}} \xi \ind{^e^{B|}} C \ind{_{ae[b|f}} \xi \ind{^f^E} \gamma \ind{_{c]}_E^{|C]}} \right) \\
 - \frac{1}{2(n-3)(n-5)} \left( \xi \ind{^{aeE}} C \ind{_{aedf}} \xi \ind{^{df}^{[A}} \gamma \ind{_{[b}_E^B} \xi \ind*{_{c]}^{C]}} - \xi \ind{^{aeE}} C \ind{_{aedf}} \xi \ind{^{df}^{[A}} \gamma \ind{_{bc}_E^B} \xi ^{C]} \right) \, ,
\end{multline*}
\begin{multline*}
 {}^\mfC _\xi \Pi_0^4 (C) := \xi^{[A} \xi \ind{^a ^{B]}} C \ind{_{a(bc)d}} \xi \ind{^d ^{[C}} \xi \ind{^{D]}} + \frac{1}{n+3} \left( \xi^{[A} \xi \ind*{_{(b}^{B]}} C \ind{_{c)dae}} \xi ^{d[C} \xi \ind{^{ae}^{D]}} + \xi^{[A} \gamma \ind{_{(b}_E^{B]}} C \ind{_{c)dae}} \xi^{dE} \xi \ind{^{ae}^{[C}} \xi ^{D]} \right. \\
\left. + 2 \, \xi^{[A} \gamma \ind{_{(b}_E^{B]}} C \ind{_{c)dae}} \xi^{d[C} \xi^{D]} \xi \ind{^{ae}^E} - \xi^{[A} \gamma \ind{_{(b}_E^{B]}} C \ind{_{c)dae}} \xi^{dE} \xi \ind{^{aC}} \xi ^{eD}
   \right) \\
 - \frac{1}{(n+1)(n+3)} \left( \xi^{[A} \gamma \ind{_{(b}_E^{B]}} \gamma \ind{_{c)}_F^{[C}} \xi^{D]} \xi \ind{^{aeE}} C \ind{_{aedf}} \xi \ind{^{dfF}} + \frac{1}{2} \xi \ind*{_b^{[A}} \xi \ind{^{aeB]}} C \ind{_{aedf}} \xi \ind{^{df[C}} \xi^{D]} \right. \\
\left.  + \xi^{[A} \gamma \ind{_{(b}_E^{B]}} \gamma \ind{_{c)}_F^{[C}} \xi^{aD]} \xi^{fF} \xi \ind{^{aeE}} C \ind{_{aedf}} - \gamma \ind{_{(b}_E^{[A}} \xi \ind{^{aeB]}} \xi \ind*{_{c)}^{[C}} \xi^{dD]} \xi^{fE} C \ind{_{aedf}}  \right) \\
+ \left( [AB] \leftrightarrow [CD] \right)  \, , 
\end{multline*}
\begin{flalign*}
 & {}^\mfC _\xi \Pi_1^0 (C) := \xi \ind{^{ab} ^B} C \ind{_{abcd}} \xi \ind{^d ^C} \, , &
 \end{flalign*}
 \begin{multline*} 
 {}^\mfC _\xi \Pi_1^1 (C) := \xi \ind{^a ^A} C \ind{_{a[bc]d}} \xi \ind{^d ^D} + \frac{1}{2(n-3)} \left( \gamma \ind{_{[b|}_E^{[A|}} \xi \ind{^{ae}^E} C \ind{_{ae|c]d}} \xi \ind{^d^{|D]}} + \gamma \ind{_{[b|}_E^{[A}} \xi \ind{^{ae}^{D]}} C \ind{_{ae|c]d}} \xi \ind{^d^E} \right) \\
 - \frac{1}{2(n-1)(n-3)} \left( \xi \ind{^{ae}^E} C \ind{_{aefd}} \xi \ind{^{fd}^{[A}} \gamma \ind{_{bc}_E^{D]}} \right) \, ,
\end{multline*}
\begin{multline*} 
 {}^\mfC _\xi \Pi_1^2 (C) := \xi \ind{^a ^A} C \ind{_{a(bc)d}} \xi \ind{^d ^D} + \frac{3}{2(n+1)} \left( \gamma \ind{_{(b|}_E^{(A|}} \xi \ind{^{ae}^E} C \ind{_{ae|c)d}} \xi \ind{^d^{|D)}} + \gamma \ind{_{(b|}_E^{(A}} \xi \ind{^{ae}^{D)}} C \ind{_{ae|c)d}} \xi \ind{^d^E} \right) \\
  + \frac{3}{2(n-1)(n+1)} \left( \xi \ind{^{ae}^E} C \ind{_{aefd}} \xi \ind{^{fd}^F} \gamma \ind{_{(b}_E^A}\gamma \ind{_{c)}_F^D} \right) \pmod { \xi^A \xi^D C \ind{_{bc}} } \, , 
 \end{multline*}
\begin{flalign*}
 & {}^\mfC _\xi \Pi_2^0 (C) := \xi \ind{^{ab} ^A} C \ind{_{abcd}}  \, ,  & 
 & {}^\mfC _\xi \Pi_2^1 (C) := \xi \ind{^a ^A} C \ind{_{a(bc)d}} \xi \ind{^d ^B} \, ,  & 
\end{flalign*}
\begin{multline*}
 {}^\mfC _\xi \Pi_2^2 (C) := \xi \ind{^{[A}} \xi \ind{^a^{D]}} C \ind{_{abcd}}  - \frac{1}{n-3} \left( \xi \ind{^a^{[A}} C \ind{_{ab[c|e}} \gamma \ind{_{|d]E}^{D]}} \xi ^{eE} - \xi \ind{^a^{[A}} C \ind{_{a[c|be}} \gamma \ind{_{|d]E}^{D]}} \xi ^{eE}  \right) \\
 - \frac{1}{n+1}  \left( \xi \ind{^{ae}^E} C \ind{_{aecd}} \gamma \ind{_b_E^{[A}} \xi^{D]} - \xi \ind{^{ae}^E} C \ind{_{aeb[c}} \gamma \ind{_{d]}_E^{[A}} \xi^{D]} \right) \\
- \frac{3}{2(n+1)(n-3)} \left( \xi \ind{^{ae}^E} C \ind{_{aebf}} \xi \ind{^f^F} \gamma \ind{_{[c}_E^{[A}} \gamma \ind{_{d]}_F^{D]}} - \xi \ind{^{ae}^E} C \ind{_{ae[c|f}} \xi \ind{^f^F} \gamma \ind{_{|d]}_E^{[A}} \gamma \ind{_b_F^{D]}} \right) \\
+ \frac{1}{2(n+1)(n-3)} \left( \xi \ind{^{ae}^{[A|}} C \ind{_{aebf}} \xi \ind{^f^E} \gamma \ind{_{cd}_E^{|D]}} - \xi \ind{^{ae}^{[A|}} C \ind{_{ae[c|f}} \xi \ind{^f^E} \gamma \ind{_{|d]b}_E^{|D]}} \right) \\
- \frac{2}{(n+1)(n-3)} \left( \xi \ind{^{ae}^E} C \ind{_{aebf}} \xi \ind{^f^{[A}} \gamma \ind{_{cd}_E^{D]}} - \xi \ind{^{ae}^E} C \ind{_{ae[c|f}} \xi \ind{^f^{[A}} \gamma \ind{_{|d]b}_E^{D]}} \right) \\
+ \frac{2}{(n+1)(n-1)(n-3)} \xi \ind{^{ae}^E} C \ind{_{aefg}} \xi \ind{^{fg}^F} \left( \gamma \ind{_b_E^{[A}} \gamma \ind{_{cd}_F^{D]}} - \gamma \ind{_{[c}_E^{[A}} \gamma \ind{_{d]b}_F^{D]}} \right) \, ,
\end{multline*}
\begin{flalign*}
 & {}^\mfC _\xi \Pi_3^0 (C) := \xi \ind{^a ^A} C \ind{_{abcd}} \, ,  &
\end{flalign*}

\section{Spinor calculus in three and five dimensions}\label{sec-3-5dim}
In this appendix, we give a brief description of spinor calculus in dimensions three and five.

\subsection{Three dimensions} \label{sec-3dim}
Let $(\mcM,g)$ be a three-dimensional complex Riemannian manifold equipped with a holomorphic volume form and a holomorphic spin structure. The spin group is the complex special linear group $\SL(2,\C)$ acting on two-dimensional spinor space $\mfS$ and its dual $\mfS^*$, which we shall identify by means of volume forms $\varepsilon_{AB}$ and $\varepsilon^{AB}$. All spinors are pure. By and large, this is analogous to the two-spinor calculus of \cite{Penrose1986}, except that there is no `primed' spinor space.
We can convert tensorial quantities into spinorial ones by means of the normalised $\gamma$-matrices $\frac{1}{\sqrt{2}} \gamma \ind{_a^{AB}}$, which are symmetric in their spinor indices, and satisfy the identity
\begin{align*}
	\gamma \ind{_a _A ^B} \gamma \ind{^a _C ^D} & = - \delta \ind*{_A ^D} \delta \ind*{_C ^B} + \varepsilon \ind{_{AC}} \varepsilon \ind{^{BD}} \, , & \mbox{i.e.} & &
	\gamma \ind{_a _{AB}} \gamma \ind{^a _{CD}} & = - 2 \, \varepsilon \ind{_{A\lp{C}}} \varepsilon \ind{_{\rp{D}B}}  \, .
\end{align*}
The standard representation $\mfV$ of $\SO(3,\C)$ is isomorphic to $\odot^2 \mfS$, and, by Hodge duality, to $\wedge^2 \mfV$. There is no Weyl tensor in dimension three, while the tracefree Ricci tensor and the Cotton-York tensor are represented by totally symmmetric spinors $\Phi_{ABCD}$ and $A_{ABCD}$ respectively.

\subsubsection{Projective spinor fields}
Let $[\xi^A]$ be a holomorphic projective pure spinor field. Then, unlike in in higher odd dimensions, its stabiliser $P$, with Lie algebra $\prb$, at a point induces a $|1|$-grading on the Lie algebra $\g \cong \wedge^2 \mfV$ of $\Spin(3,\C)$. As in dimension four, 
the spinor $\xi^A$ defines a $P$-invariant filtration $\mfS^{\frac{k}{2}} \subset \mfS^{\frac{k}{2}-1} \subset \ldots \subset \mfS^{-\frac{k}{2}+1} \subset \mfS^{-\frac{k}{2}}$
on $\mfS^{-\frac{k}{2}} := \odot^k \mfS$, where $\mfS^{\frac{k-2\ell+2}{2}} := \left\{ \phi \ind{_{A_1 \ldots A_k}} \in \mfS^{-\frac{k}{2}} : \phi \ind{_{A_1 \ldots A_\ell A_{\ell+1} \ldots A_k}} \xi \ind{^{A_1}} \ldots \xi \ind{^{A_\ell}} \right\}$, and $\xi^A$ is said to be a \emph{principal spinor} of $\phi \ind{_{A_1 \ldots A_k}}$ if it lies in $\mfS^{-\frac{k}{2}+1}$.
%


\paragraph{Intrinsic torsion}
The projective spinor field $[\xi^A]$ induces a $P$-invariant filtration $\mfW^0 \subset \mfW^{-1} \subset \mfW^{-2}$ on the $\prb$-module $\mfW := \mfV \otimes \left( \g/\prb \right)$ of intrinsic torsions. From a geometric point of view, the associated almost null structure $\mcN_\xi$ of $[\xi^A]$ is of rank-$1$ and thus always integrable. The relation between $\mfW$ and the geometric properties of $\mcN_\xi$ and $\mcN_\xi^\perp$ is given below.
\begin{prop}
 Let $[ \xi^A ]$ be a holomorphic projective spinor field on $(\mcM,g)$ with associated null structure $\mcN_\xi$. Denote by $\nabla_{AB}$ the Levi-Civita connection of $g$. Then, pointwise, the intrinsic torsion of $[ \xi^A ]$
 \begin{itemize}
  \item lies in $\mfW^{-1}$ if and only if $\xi \ind{^A} \xi \ind{^B} \xi \ind{^C} \nabla \ind{_{AB}} \xi \ind*{_C} = 0$
 if and only if $\mcN_\xi$ is co-integrable if and only if $\mcN_\xi$ is (totally) geodetic;
  \item lies in $\mfW^0$ if and only if $\xi \ind{^B} \xi \ind{^C} \nabla \ind{_{AB}} \xi \ind*{_C} = 0$
 if and only if $\mcN_\xi$ is (totally) co-geodetic;
  \item vanishes if and only if $\xi \ind{^C} \nabla \ind{_{AB}} \xi \ind*{_C} = 0$.
 \end{itemize}
\end{prop}

\begin{rem}
The above conditions are equivalent to the null vector field $k \ind{^{AB}} := \xi \ind{^A} \xi \ind{^B}$ being geodetic, dilation free and recurrent respectively. The properties of null structures in dimension three were also studied in \cite{Nurowski2015} in the context of a Goldberg--Sachs-type theorem.
\end{rem}

\subsection{Five dimensions} \label{sec-5dim}
Let $(\mcM,g)$ be a five-dimensional complex Riemannian manifold equipped equipped with holomorphic volume form and a holomorphic spin structure. We first work at a point. The spin group is isomorphic to the complex symplectic group $\Sp(4,\C)$, so that the spinor space $\mfS$ is a four-dimensional complex vector space equipped with non-degenerate skew-symmetric bilinear form $\gamma_{AB}$ with inverse $\gamma^{AB}$, i.e. $\gamma_{AC} \gamma^{BC} = \delta_A^B$, by means of which we shall lower and raise indices. All spinors are pure. Tensor indices are converted into spinorial ones by means of the normalised skewsymmetric $\gamma$-matrices $\frac{\ii}{2} \gamma \ind{_a^{AB}}$, tracefree with respect to $\gamma \ind{_{AB}}$, which satisfy
\begin{align}\label{eq-g-epsilon}
	\gamma \ind{_a _A ^B} \gamma \ind{^a _C ^D} & = \delta \ind*{_A ^B} \delta \ind*{_C ^D} -  2 \, \delta \ind*{_A ^D} \delta \ind*{_C ^B} - 2 \, \gamma \ind{_{AC}} \gamma \ind{^{BD}} \, ,
&
\text{i.e.} & & 
	\gamma \ind{_a _{AB}} \gamma \ind{^a _{CD}} & = \gamma \ind{_{AB}} \gamma \ind{_{CD}} + 4 \, \gamma \ind{_{A \lb{C}}} \gamma \ind{_{\rb{D} B}} \, .
\end{align}
In particular, we have $\mfV \cong ( \wedge^2 \mfS )_\circ$ and $\wedge^2 \mfV \cong \odot^2 \mfS$ where $\mfV$ is the standard representation of $\SO(5,\C)$. The tracefree Ricci tensor, the Weyl tensor and the Cotton tensor admit the spinorial expressions
\begin{align*}
\Phi \ind{_{ABCD}} & = \Phi_{[AB][CD]} \, , &
C \ind{_{ABCD}} & = C_{(ABCD)} \, , &
 A_{A B C D} & = A_{[AB](CD)} \, , 
\end{align*}
respectively, all of which are completely tracefree, and where $\Phi_{[ABC]D} = 0$, $A_{[A B C] D} = 0$.

\subsubsection{Projective spinor fields}
Let $[ \xi^A ]$ be a projective spinor field on $(\mcM,g)$ with stabiliser $P \subset \Spin(5,\C)$ at a point.
Following section \ref{sec-algebra}, we have the induced $P$-invariant filtrations $\mfS^1 \subset \mfS^0 \subset \mfS^{-1}$ and $\mfV^1 \subset \mfV^0 \subset \mfV^{-1}$ where
\begin{align*}
 \mfS^{-1} & := \mfS \, , & \mfS^0 & := \{ \alpha^A \in \mfS : \alpha_A \xi^A = 0 \} \, , & \mfS^1 & := \langle \xi^A \rangle = \{ \alpha^A \in \mfS : \alpha_{[A} \xi_{B]} = 0 \} \, , \\
  \mfV^{-1} & := \mfV \, , & 
 \mfV^0 & := \left\{ V \ind{^{AB}} \in \mfV : \xi_C V \ind{^{C\lb{A}}} \xi \ind{^{\rb{B}}} = 0 \right\} \, , &
 \mfV^1 & := \left\{ V \ind{^{AB}} \in \mfV : \xi_C V \ind{^{CA}} = 0 \right\} \, .
\end{align*}
Equivalently, $\mfV^1 = \left\{ V \ind{^{AB}} \in \mfV : \xi_{[A} V_{BC]} = 0 \right\}$.
Similarly, we can express the various $P$-invariant submodules of $\g \cong \odot^2 \mfS$ in terms of the maps
\begin{align*}
 {}^\g _\xi \Pi^0_{-2} ( \phi ) & := \xi^A \xi^B \phi_{A B} \, , &
 {}^\g _\xi \Pi^0_{-1} ( \phi ) & := \xi^A \phi \ind{_A ^{\lb{B}}} \xi^{\rb{C}} \, , \\
 {}^\g _\xi \Pi^0_0 ( \phi ) & := \xi^A \phi \ind{_A ^B} \, , &
 {}^\g _\xi \Pi^1_0 ( \phi ) & := \xi_{\lb{A}} \phi \ind{_{\rb{B} \lb{C}}} \xi_{\rb{D}} \, , &
 {}^\g _\xi \Pi^0_1 ( \phi ) & := \phi \ind{_{A \lb{B}}} \xi_{\rb{C}} \, ,
\end{align*}
where $\phi \ind{_{AB}} = \phi \ind{_{(AB)}}$.

The explicit expressions for the maps ${}^\mfF _\xi \Pi_i^j$, ${}^\mfA _\xi \Pi_i^j$ and ${}^\mfC _\xi \Pi_i^j$ defined in section \ref{sec-curvature} can be significantly simplified.
For $\Phi \ind{_{ABCD}} \in \mfF$, we have
\begin{align*}
{}^\mfF _\xi \Pi^0_{-2} ( \Phi ) & := \xi_{\lb{F}} \Phi_{\rb{A} B C \lb{D}} \xi^B \xi^C \xi_{\rb{E}} \, , &
{}^\mfF _\xi \Pi^0_{-1} ( \Phi ) & := \Phi_{A B C \lb{D}} \xi^B \xi^C \xi_{\rb{E}} \, , \\
{}^\mfF _\xi \Pi^0_{0} ( \Phi ) & := \Phi_{A B C D} \xi^B \xi^C \, , &
{}^\mfF _\xi \Pi^1_{0} ( \Phi ) & := \xi \ind*{_{[A}} \Phi \ind{_{B] E C D}} \xi^E + \varepsilon \ind{_{[A|[C}} \Phi \ind{_{D]EF|B]}} \xi \ind{^E} \xi \ind{^F} \, , \\
{}^\mfF _\xi \Pi^0_{1} ( \Phi ) & := \Phi_{A B C D} \xi^B \, .
\end{align*}
For $A \ind{_{ABCD}} \in \mfA$, we have
\begin{align*}
 {}^\mfA _\xi \Pi^0_{-3} ( A ) & := \xi_{\lb{E}} A_{\rb{A} B C D} \xi^B \xi^C \xi^D \, , \\
 {}^\mfA _\xi \Pi^0_{-2} ( A ) & := A \ind{_{A B C D}} \xi \ind{^B} \xi \ind{^C} \xi \ind{^D} \, , \qquad \qquad \qquad \qquad
 {}^\mfA _\xi \Pi^1_{-2} ( A ) := \xi \ind*{_{[A}} A \ind{_{B] EF [C}} \xi \ind{^E} \xi \ind{^F} \xi \ind*{_{D]}} + \xi \ind*{_{[C}} A \ind{_{D] EF [A}} \xi \ind{^E} \xi \ind{^F} \xi \ind*{_{B]}} \, , \\
 {}^\mfA _\xi \Pi^0_{-1} ( A ) & := 4 \, \xi \ind*{_{[A}} A \ind{_{B]DEC}} \xi \ind{^D} \xi \ind{^E} - A \ind{_{ABDE}} \xi \ind{^D} \xi \ind{^E} \xi \ind*{_C} \, , \qquad \qquad \qquad
 {}^\mfA _\xi \Pi^1_{-1} ( A ) := A_{A B C \lb{D}} \xi^B \xi^C \xi_{\rb{E}} \, , \\
 {}^\mfA _\xi \Pi^3_{-1} ( A ) & := \xi_{\lb{A}} A_{\rb{B} G \lb{C} | \lb{E}} \xi^G \xi_{\rb{F}} \xi_{|\rb{D}} + \xi_{\lb{C}} A_{\rb{D} G \lb{E} | \lb{A}} \xi^G \xi_{\rb{B}} \xi_{|\rb{F}} + \xi_{\lb{E}} A_{\rb{F} G \lb{A} | \lb{C}} \xi^G \xi_{\rb{D}} \xi_{|\rb{B}} \, ,  \\
 {}^\mfA _\xi \Pi^0_{0} ( A ) & := A_{A B C D} \xi^B \xi^C  \, , \qquad \qquad \qquad \qquad \qquad \qquad 
 {}^\mfA _\xi \Pi^1_{0} ( A ) := \xi \ind{^F} A \ind{_{A F [B| [D}} \xi \ind*{_{E]}} \xi \ind*{_{|C]}} \, , \\
 {}^\mfA _\xi \Pi^2_{0} ( A ) & := \xi \ind*{_{[A}} \xi \ind{^F} A \ind{_{B] F E [C}} \xi \ind*{_{D]}} + \xi \ind*{_{[A}} \xi \ind{^F} A \ind{_{B] E F [C}} \xi \ind*{_{D]}} \, , \\
 {}^\mfA _\xi \Pi^0_{1} ( A ) & := \xi \ind*{_{[A}} \xi \ind{^E} A_{B] E C D} - \xi \ind{^E} A \ind{_{ABE(C}} \xi \ind*{_{D)}} \, , \qquad \qquad \qquad \qquad \qquad \qquad 
 {}^\mfA _\xi \Pi^1_{1} ( A ) := \xi \ind{^E} A \ind{_{AEB[C}} \xi \ind*{_{D]}} \\
 {}^\mfA _\xi \Pi^3_{1} ( A ) & := A_{A B \lb{C} | \lb{E}} \xi_{\rb{F}} \xi_{|\rb{D}} + A_{C D \lb{E} | \lb{A}} \xi_{\rb{B}} \xi_{|\rb{F}} + A_{E F \lb{A} | \lb{C}} \xi_{\rb{D}} \xi_{|\rb{B}} \, , \\
 {}^\mfA _\xi \Pi^0_{2} ( A ) & := \xi \ind{^D} A \ind{_{ADBC}} \, ,  \qquad \qquad \qquad \qquad \qquad \qquad \qquad \qquad 
 {}^\mfA _\xi \Pi^1_{2} ( A ) := A \ind{_{ABC[D}} \xi \ind*{_{E]}} + A \ind{_{DEC[A}} \xi \ind*{_{B]}} \, .
\end{align*}
Finally, for $C \ind{_{ABCD}} \in \mfC$, we have
\begin{align*}
 {}^\mfC _\xi \Pi^0_{-4} ( C ) & := C_{A B C D} \xi^A \xi^B \xi^C \xi^D \, , &
 {}^\mfC _\xi \Pi^0_{-3} ( C ) & := C_{A B C \lb{D}} \xi^A \xi^B \xi^C \xi_{\rb{E}} \, , \\
 {}^\mfC _\xi \Pi^0_{-2} ( C ) & := C_{A B C D} \xi^A \xi^B \xi^C \, , &
 {}^\mfC _\xi \Pi^2_{-2} ( C ) & := \xi_{\lb{F}} C_{\rb{A} B C \lb{D}} \xi^B \xi^C \xi_{\rb{E}} \, , \\
 {}^\mfC _\xi \Pi^0_{-1} ( C ) & := C_{A B C \lb{D}} \xi^B \xi^C \xi_{\rb{E}} \, , &
 {}^\mfC _\xi \Pi^1_{-1} ( C ) & := \xi_{\lb{F}} C_{\rb{A} B \lb{C} | \lb{D}} \xi^B \xi_{\rb{E}} \xi_{|\rb{F}} \, , \\
 {}^\mfC _\xi \Pi^0_0 ( C ) & := C_{A B C D} \xi^B \xi^C \, , &
 {}^\mfC _\xi \Pi^2_0 ( C ) & := \xi_{\lb{F}} C_{\rb{A} B C \lb{D}} \xi^B \xi_{\rb{F}} \, , \\
 {}^\mfC _\xi \Pi^4_0 ( C ) & := \xi_{\lb{G}} \xi_{|\lb{F}} C_{\rb{A} | \rb{B} \lb{C} | \lb{D}} \xi_{\rb{E}} \xi_{|\rb{F}} \, , \\
 {}^\mfC _\xi \Pi^0_1 ( C ) & := \xi_{\lb{F}} C_{\rb{A} B C D} \xi^B \, , &
 {}^\mfC _\xi \Pi^1_1 ( C ) & := \xi_{\lb{F}} C_{\rb{A} B \lb{C} | \lb{D}} \xi_{\rb{E}} \xi_{|\rb{F}} \, , \\
 {}^\mfC _\xi \Pi^0_2 ( C ) & := C_{A B C D} \xi^B \, , &
 {}^\mfC _\xi \Pi^2_2 ( C ) & := \xi_{\lb{F}} C_{\rb{A} B C \lb{D}} \xi_{\rb{E}} \, , \\
 {}^\mfC _\xi \Pi^0_3 ( C ) & := C_{A B C \lb{D}} \xi_{\rb{E}} \, .
\end{align*}

\paragraph{Intrinsic torsion}
Denote by $\nabla_{AB}$ the Levi-Civita connection of $g$. Then the differential characterisations of the intrinsic torsion of $[ \xi^A ]$ can be re-expressed as
\begin{align}
 \eqref{eq-cond-31} & & \Longleftrightarrow &  & \xi_{\lb{A}} \left( \xi^C \nabla_{\rb{B}C} \xi^D \right) \xi_D & = 0 \, , \label{eq-cond5d-31}  \\
\eqref{eq-cond-20} & & \Longleftrightarrow &  & \left( \xi^B \nabla_{AB} \xi^C \right) \xi_C & = 0 \, , \label{eq-cond5d-20} \\
 \eqref{eq-cond-21} & & \Longleftrightarrow &  &  \left( \xi^D \nabla \ind{_{D[A}} \xi \ind*{_{B}} \right) \xi \ind*{_{C]}} & = 0 \, , \label{eq-cond5d-21} \\
 \eqref{eq-cond-22} & & \Longleftrightarrow &  &  \xi_{\lb{A}} \left( \xi^E \nabla_{\rb{B}E} \xi_{\lb{C}} \right) \xi_{\rb{D}} + \xi_{\lb{C}} \left( \xi^E \nabla_{\rb{D}E} \xi_{\lb{A}} \right) \xi_{\rb{B}} & = 0 \, , \label{eq-cond5d-22} \\
\eqref{eq-cond-10} & & \Longleftrightarrow &  &  (\nabla_{A B} \xi^C ) \xi_C & = 0 \, , \label{eq-cond5d-10} \\
\eqref{eq-cond-12} & & \Longleftrightarrow &  &  \left( \xi^D \nabla_{A D} \xi_{\lb{B}} \right) \xi_{\rb{C}} & = 0 \, , \label{eq-cond5d-12} \\
 \eqref{eq-cond00} & & \Longleftrightarrow &  &  (\nabla_{A C} \xi^C ) \xi^B - (\xi^C \nabla_{A C}  \xi^B  )& = 0 \, , \label{eq-cond5d-00} \\
\eqref{eq-cond01} & & \Longleftrightarrow &  &  \left( \nabla_{A B} \xi_{\lb{C}} \right) \xi_{\rb{D}} + \xi_{[C} \varepsilon _{D][A} \nabla_{B]E} \xi^E + \varepsilon _{[C|[A} \xi^E  \nabla_{B]E} \xi_{|D]} & = 0 \, . \label{eq-cond5d-01}
 \end{align}
Finally, denote by $\mcN_\xi$ the almost null structure associated to $[\xi^A]$. Then condition \eqref{eq-almost-foliating} for $\mcN_\xi$ to satisfy $[\Gamma(\mcN_\xi),\Gamma(\mcN_\xi)] \subset \Gamma(\mcN_\xi^\perp)$ reduces to \eqref{eq-cond5d-31}. Condition \eqref{eq-Nperp-int} for $\mcN_\xi$ to be co-integrable can be expressed as 
\begin{align}\label{eq-Nperp-foliating-5d}
 \xi \ind*{_{[A}} \left( \xi \ind{^E} \nabla \ind{_{B]E}} \xi \ind*{_{[C}} \right) \xi \ind*{_{D]}} + \xi \ind*{_{[C}} \varepsilon \ind{_{D][A}} \left( \xi \ind{^E} \nabla \ind{_{B]E}} \xi \ind*{_F} \right) \xi \ind{^F} & = 0 \, .
\end{align}
Condition \eqref{eq-foliating} for $\mcN_\xi$ to be integrable and co-integrable can be expressed as
 \begin{align}\label{eq-foliating-5d}
\left( \xi^B \nabla_{A B} \xi^C \right) \xi_C & = 0 \, , &  \xi_{\lb{A}} \left( \xi^E \nabla_{\rb{B}E} \xi_{\lb{C}} \right) \xi_{\rb{D}} & = 0 \, .
 \end{align}

As an example, one can check that a solution $\xi^A$ of the twistor equation
\begin{align*}
 \nabla \ind{_{AB}} \xi \ind*{_C} + \frac{1}{5} \varepsilon \ind{_{AB}} \zeta \ind{_C} + \frac{4}{5} \zeta \ind{_{\lb{A}}} \varepsilon \ind{_{\rb{B}C}} & = 0 \, , & \zeta \ind{^A} & = \nabla \ind{^{AB}} \xi \ind*{_B} \, ,
\end{align*}
satisfies equations \eqref{eq-foliating-5d} if and only if $\xi^A \zeta_A = 0$ as claimed in Proposition \ref{prop-conformal-Killing-foliating}.

\section{Conformal structures}\label{sec-conformal}
Background information on (holomorphic) conformal structures is already given \cite{Taghavi-Chabert2016} to which the reader should refer. Here, we merely collect useful formulae concerning spinor transformations under a conformal change of holomorphic metrics $\hat{g} \ind{_{ab}} = \Omega^2 g \ind{_{ab}}$ for some non-vanishing holomorphic function $\Omega$ on $\mcM$. Correspondingly, the $\gamma$-matrices can be chosen to transform as $\gamma \ind{_a_A^B}  \mapsto \hat{\gamma} \ind{_a_A^B} = \Omega \gamma \ind{_a_A^B}$ where $\hat{\gamma} \ind{_a_A^B}$ denote the $\gamma$-matrices for metric $\hat{g}_{ab}$. In addition, we can choose the spin invariant bilinear forms $\gamma_{AB}$ on $\mcS$ to rescale with a conformal weight of $1$, and their dual with a conformal weight of $-1$. This means in particular that the quantities $\gamma \ind{_a^{AB}}$ and $\gamma \ind{^a_{AB}}$ have conformal weight $0$. Then the spin connection $\hat{\nabla}_a$ is related to $\nabla_a$ by
\begin{align}\label{eq-conformal_spin_connection2}
 \hat{\nabla} \ind{_a} \xi \ind{^B} & = \nabla \ind{_a} \xi \ind{^B} - \frac{1}{2} \Upsilon \ind{_b} \gamma \ind{^b_a_C^B} \xi \ind{^C} + \frac{1}{2} \Upsilon \ind{_a} \xi \ind{^B} 
 = \nabla \ind{_a} \xi \ind{^B} - \frac{1}{2} \Upsilon \ind{_b} \gamma \ind{^b_C^D} \gamma \ind{_a_D^B} \xi \ind{^C} \, , 
\end{align}
for any holomorphic spinor field $\xi^{A'}$, and similarly for dual spinors. This connection preserves the hatted $\gamma$-matrices \emph{and} the hatted bilinear forms on $\mcS$, in agreement with the convention of \cite{Penrose1984}.

If we now assume that $\xi^A$ is a pure spinor field, we then obtain from \eqref{eq-conformal_spin_connection2}
\begin{align*}
 \left( \hat{\nabla} \ind{_a} \hat{\xi} \ind{^{bB}} \right) \hat{\xi} \ind{_b^C}
& = \left( \nabla \ind{_a} \xi \ind{^{bB}} \right) \xi \ind*{_b^C} + \frac{1}{2} \Upsilon \ind{_b} \xi \ind{^b^D} \gamma \ind{_a_D^B} \xi \ind{^C} + 2 \, \Upsilon \ind{_b} \xi \ind{^b^{[B}} \xi \ind*{_a^{C]}} \, ,  \\
 \left( \hat{\gamma} \ind{^a_B^A} \hat{\nabla} \ind{_a} \hat{\xi} \ind{^{bB}} \right) \hat{\xi} \ind{_b^C}
& = \Omega^{-1} \left( \left( \gamma \ind{^a_B^A} \nabla \ind{_a} \xi \ind{^{bB}} \right) \xi \ind*{_b^C} + \frac{n-2}{2} \left( 2 \, \Upsilon \ind{_b} \xi \ind{^b^C} \xi \ind{^A} - \Upsilon \ind{_b} \xi \ind{^b^{A}} \xi \ind{^{C}} \right) \right) \, ,  
\\
\hat{\xi} \ind{^a^A} \hat{\nabla} \ind{_a} \xi \ind{^B}
& = \Omega^{-1} \left( \xi \ind{^a^A} \nabla \ind{_a} \xi \ind{^{B}}  - \frac{1}{2} \Upsilon \ind{_b} \xi \ind{^b^B} \xi \ind{^A} + \Upsilon \ind{_b} \xi \ind{^b^A} \xi \ind{^B} \right) \, ,  \\
 \left( \hat{\xi} \ind{^a^A} \hat{\nabla} \ind{_a} \hat{\xi} \ind{^{bB}} \right) \hat{\xi} \ind{_b^C}
& = \Omega^{-1} \left( \left( \xi \ind{^a^A} \nabla \ind{_a} \xi \ind{^{bB}} \right) \xi \ind*{_b^C} - \frac{1}{2} \Upsilon \ind{_b} \xi \ind{^b^B} \xi \ind{^A} \xi \ind{^C} - 2 \Upsilon \ind{_b} \xi \ind{^b^{[A}} \xi \ind{^{C]}} \xi \ind{^B} \right) \, ,  
\\
 \left( \hat{\nabla} \ind{_a} \hat{\xi} \ind{^{aB}} \right) \xi \ind{^C} -  \hat{\xi} \ind{^a^B} \hat{\nabla} \ind{_a} \xi \ind{^{C}}  
& = \Omega^{-1} \left( \left( \nabla \ind{_a} \xi \ind{^{aB}} \right) \xi \ind{^C} -  \xi \ind{^a^B} \nabla \ind{_a} \xi \ind{^{C}} + \frac{n-2}{2} \Upsilon \ind{_b} \xi \ind{^b^B} \xi \ind{^C} + \frac{1}{2} \Upsilon \ind{_b} \xi \ind{^b^C} \xi \ind{^B} \right) \, ,  
\end{align*}
where we have set $\hat{\xi} \ind{^a^A} := \hat{\gamma} \ind{^a_B^A} \xi \ind{^B}$. In particular, from the first three expressions, we get
\begin{align*}
 \left( \hat{\nabla} \ind{_a} \hat{\xi} \ind{^{b[B}} \right) \hat{\xi} \ind{_b^C} \xi^{D]}
& = \left( \nabla \ind{_a} \xi \ind{^{b[B}} \right) \xi \ind*{_b^C} \xi^{D]} + 2 \, \Upsilon \ind{_b} \xi \ind{^b^{[B}} \xi \ind*{_a^{C}} \xi^{D]} , ,  \\
 \left( \hat{\gamma} \ind{^a_B^A} \hat{\nabla} \ind{_a} \hat{\xi} \ind{^{bB}} \right) \hat{\xi} \ind{_b^{[C}}  \xi^{D]}
& = \Omega^{-1} \left( \left( \gamma \ind{^a_B^A} \nabla \ind{_a} \xi \ind{^{bB}} \right) \xi \ind*{_b^{[C}} \xi^{D]}  + (n-2) \, \xi \ind{^A} \Upsilon \ind{_b} \xi \ind{^b^{[C}}   \xi^{D]} \right) \, ,
\\
 \left( \hat{\xi} \ind{^a^A} \hat{\nabla} \ind{_a} \xi \ind{^{[B}} \right) \xi \ind{^{C]}}
& = \Omega^{-1} \left(\left(  \xi \ind{^a^A} \nabla \ind{_a} \xi \ind{^{[B}} \right) \xi \ind{^{C]}}  - \frac{1}{2} \xi \ind{^A} \Upsilon \ind{_b} \xi \ind{^b^{[B}}  \xi \ind{^{C]}} \right) \, ,
\end{align*}
from which the conformal invariance of \eqref{eq-cond-11} follows.

\bibliography{biblio}

\end{document}